\theoremstyle{definition}
\newtheorem{theorem}{Theorem}[section]
\newtheorem{proposition}[theorem]{Proposition}
\newtheorem{lemma}[theorem]{Lemma}
\newtheorem{definition}[theorem]{Definition}
\newtheorem{question}[theorem]{Question}
\newtheorem{corollary}[theorem]{Corollary}
\newtheorem{conjecture}[theorem]{Conjecture}
\newtheorem{remark}[theorem]{Remark}
\newtheorem*{UC}{The Unknotting Conjecture}
\newcommand{\Z}{\mathbb{Z}}
\newcommand{\R}{\mathbb{R}}
\newcommand{\RP}{\mathbb{RP}}
\newcommand{\CP}{\mathbb{CP}}
\newcommand{\U}{\mathcal{U}}
\newcommand{\bi}{\begin{itemize}}
\newcommand{\ei}{\end{itemize}}
\newcommand{\be}{\begin{enumerate}}
\newcommand{\ee}{\end{enumerate}}
\newcommand{\n}{\beta}
\newcommand{\emp}{\emptyset}
\newcommand{\X}{\times}
\newcommand{\eps}{\epsilon}
\newcommand{\A}{\alpha}
\newcommand{\pd}{\partial}
\newcommand{\T}{\mathcal{T}}
\newcommand{\Ss}{\mathcal{S}}
\newcommand{\D}{\mathcal{D}}
\newcommand{\Pau}{\mathcal{P}}
\newcommand{\K}{\mathcal{K}}
\newcommand{\g}{\gamma}
\newcommand{\ups}{\upsilon}
\newcommand{\BB}{\mathcal{B}}
\newcommand{\mrk}{\text{mrk}}
\newtheorem*{rep@theorem}{\rep@title}
\newcommand{\newreptheorem}[2]{%
\newenvironment{rep#1}[1]{%
 \def\rep@title{#2 \ref{##1}}%
 \begin{rep@theorem}}%
 {\end{rep@theorem}}}
\begin{document}

\rhead{\thepage}
\lhead{\author}
\thispagestyle{empty}


\raggedbottom
\pagenumbering{arabic}
\setcounter{section}{0}


\title{Bridge trisections of knotted surfaces in $S^4$}
\date{\today}

\author{Jeffrey Meier}
\address{Department of Mathematics, Indiana University, 
Bloomington, IN 47408}
\email{jlmeier@indiana.edu}
\urladdr{http://pages.iu.edu/~jlmeier/} 

\author{Alexander Zupan}
\address{Department of Mathematics, The University of Texas at Austin, Austin, TX 78712}
\email{zupan@math.utexas.edu}
\urladdr{http://math.utexas.edu/users/zupan}

\begin{abstract}

We introduce bridge trisections of knotted surfaces in the four-sphere.  This description is inspired by the work of Gay and Kirby on trisections of four-manifolds and extends the classical concept of bridge splittings of links in the three-sphere to four dimensions.  We prove that every knotted surface in the four-sphere admits a bridge trisection (a decomposition into three simple pieces) and that any two bridge trisections for a fixed surface are related by a sequence of stabilizations and destabilizations.  We also introduce a corresponding diagrammatic representation of knotted surfaces and describe a set of moves that suffice to pass between two diagrams for the same surface.  Using these decompositions, we define a new complexity measure: the bridge number of a knotted surface.  In addition, we classify bridge trisections with low complexity, we relate bridge trisections to the fundamental groups of knotted surface complements, and we prove that there exist knotted surfaces with arbitrarily large bridge number.

\end{abstract}

\maketitle
\section{Introduction}\label{sec:intro}

The bridge number of a link in $S^3$ was first introduced by Horst Schubert \cite{schubert} in 1954, and in the past sixty years, it has become clear that this invariant is an effective measure of topological complexity.

Moreover, in the last several decades a significant body of work has revealed deep connections between bridge decompositions of links and Heegaard splittings of closed 3--manifolds.  Superficially, the analogy between the two theories is clear: A bridge decomposition of a link $L$ in $S^3$ is a splitting of $(S^3, L)$ into the union of two trivial tangles, while a Heegaard splitting of a closed 3--manifold $Y$ is a description of $Y$ as the union of two handlebodies. A good motto in dimension three is that any technique for studying Heegaard splittings of 3--manifolds can be adapted to produce an analogous technique for studying bridge decompositions of knots and links in $S^3$ (or in other manifolds). In the present article, we push this analogy up one dimension.

While a number of foundational questions in three-manifold topology have now been resolved, it remains a topic of great interest to develop means of applying 3--dimensional techniques to 4--dimensional objects.  Recently, Gay and Kirby \cite{gay-kirby:trisections} introduced trisections of smooth, closed 4--manifolds as the appropriate 4--dimensional analogue of Heegaard splittings.  A trisection of a closed 4--manifold $X$ is a description of $X$ as the union of \emph{three} 4--dimensional 1--handlebodies with restrictions placed on how the three pieces intersect in $X$.  Gay and Kirby prove that every $X$ admits a trisection, and any two trisections of $X$ are related by natural stabilization and destabilization operations in a 4--dimensional version of the Reidemeister-Singer Theorem for Heegaard splittings of 3--manifolds.  As evidence of the utility of trisections to bridge the gap between 3-- and 4--manifolds, the authors have shown in previous work \cite{mz:genus2} that 3--dimensional tools suffice to classify those 4--manifolds which admit a trisection of genus two.

For the precise details pertaining to trisections of 4--manifolds, we refer the reader to \cite{gay-kirby:trisections} (see also Subsection \ref{subsec:branched} below). Here, we simly recall the example of the genus zero trisection of $S^4$, which decomposes the 4--sphere into three 4--balls. Note that all objects are assumed throughout to be PL or smooth unless otherwise stated.

\begin{definition}
	The \emph{0--trisection} of $S^4$ is a decomposition $S^4=X_1\cup X_2\cup X_3$, such that
\be
\item $X_i$ is a 4--ball,
\item $B_{ij}=X_i\cap X_j= \partial X_i\cap \partial X_j$ is a 3--ball, and
\item $\Sigma = X_1\cap X_2\cap X_3 = B_{12}\cap B_{23}\cap B_{31}$ is a 2--sphere.
\ee
\end{definition}

The goal of the present paper is to adapt the theory of trisections to the study of knotted surfaces in $S^4$ in order to describe a 4--dimensional analogue to bridge decompositions of knots in $S^3$.  A \emph{knotted surface} $\K$ in $S^4$ is a smoothly embedded, closed surface, which may be disconnected or non-orientable.  When $\K$ is homeomorphic to $S^2$, we call $\K$ a 2--knot.  
	A \emph{trivial $c$--disk system} is a pair $(X,\D)$ where $X$ is a 4--ball and $\D\subset X$ is a collection of $c$ properly embedded disks $\D$ which are simultaneously isotopic into the boundary of the 4--ball $X$.
	
\begin{definition}
	A \emph{$(b;c_1,c_2,c_3)$--bridge trisection} $\T$ of a knotted surface $\K \subset S^4$ is a decomposition of the form $(S^4,\K)=(X_1, \D_1)\cup(X_2,\D_2)\cup(X_3,\D_3)$ such that
\be
\item $S^4=X_1\cup X_2\cup X_3$ is the standard genus zero trisection of $S^4$,
\item $(X_i,\D_i)$ is a trivial $c_i$--disk system, and
\item $(B_{ij},\A_{ij}) = (X_i,\D_i) \cap (X_j,\D_j)$ is a $b$--strand trivial tangle.
\ee
When appropriate, we simply refer to $\T$ as a $b$--bridge trisection.  If $c_i=c$ for all $i$, then we call $\T$ \emph{balanced}, and we say that $\K$ admits a $(b,c)$--bridge trisection.
\end{definition}

Several properties follow immediately from this definition.  First, $L_i=\partial\D_i$ is a $c_i$--component unlink in $Y_i = \pd X_i\cong S^3$, and $(Y_i, L_i) = (B_{ij},\A_{ij}) \cup_{\Sigma} (B_{ki},\A_{ki})$ is a $b$--bridge decomposition.  It follows that $b\geq c_i$ for each $i$. Next, it is straightforward to check that $\chi(K)=c_1+c_2+c_3-b$; thus, the topological type of $\K$ depends only on $b$ and the $c_i$.

Our first result is an existence theorem for bridge trisections, which we prove in Section \ref{sec:exist} using a structure we call a banded bridge splitting.

\begin{theorem}\label{thm:existence}
Every knotted surface $\K$ in $S^4$ admits a bridge trisection.
\end{theorem}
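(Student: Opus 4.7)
The plan is to reduce the existence of a bridge trisection to the existence of a \emph{banded bridge splitting} of $\K$: a joint refinement of the classical banded unlink presentation of a knotted surface and the bridge splittings of the associated unlinks. The banded bridge splitting will serve as the intermediate structure developed in Section \ref{sec:exist}, and the whole proof consists of first producing one and then translating it into a bridge trisection.

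First, I would produce a banded unlink presentation of $\K$. Splitting $S^4 = B^4_- \cup B^4_+$ along an equatorial $S^3$ and applying a standard Morse-theoretic general position argument, one isotopes $\K$ so that $\K \cap B^4_-$ is a trivial disk system bounded by an unlink $L_- \subset S^3$, so that $\K \cap B^4_+$ is a trivial disk system bounded by an unlink $L_+ \subset S^3$, and so that the saddle region between them is realized by a collection of bands attached to $L_-$ whose surgery yields $L_+$. Every knotted surface admits such a presentation, since minima and maxima can be pushed into the two 4-balls and the saddles collected onto a single level.

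Second, I would put this presentation into a compatible bridge form: choose a bridge sphere $\Sigma \subset S^3$ so that $L_-$, $L_+$, and the bands themselves all respect $\Sigma$ strongly, in the sense that $L_\pm$ are each in bridge position with respect to $\Sigma$ and the bands lie in a collar of $\Sigma$. This will require stabilization — introducing canceling pairs of bridges to $L_\pm$ and canceling pairs of bands to the band system — and it may require isotoping bands across $\Sigma$ or sliding them over one another. None of these operations change $\K$, only the numerical invariants $b$ and the $c_i$.

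Third, I would assemble the bridge trisection from the banded bridge splitting. The three tangles $(B_{ij},\A_{ij})$ that the definition asks for are the two sides of the bridge sphere for $L_-$ (or equivalently $L_+$), together with an auxiliary third tangle arising from band surgery performed along $\Sigma$; the four-dimensional pieces $X_1$ and $X_3$ are essentially the original capping 4-balls $B^4_-$ and $B^4_+$, with $\D_1$ and $\D_3$ the trivial disks capping $L_-$ and $L_+$, while $X_2$ is the 4-ball containing a neighborhood of the band system together with trivial disks realizing the saddle moves. One then checks directly that each $(X_i, \D_i)$ is a trivial disk system, since its boundary is an unlink sitting on a standardly embedded 3-sphere bounded by disks already arranged trivially, and that the pairwise intersections are trivial tangles, which is automatic from the bridge position with respect to $\Sigma$. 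The main obstacle I expect is the second step: a generic banded unlink presentation does not come with a compatible bridge sphere, and making the bands, $L_-$, and $L_+$ cooperate with a single $\Sigma$ requires careful stabilizations and band manipulations. Once the banded bridge splitting is in hand, the translation into a bridge trisection is essentially a cosmetic packaging.
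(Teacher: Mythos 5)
Your proposal follows the same route as the paper: pass to a banded unlink presentation (hyperbolic splitting), upgrade it to a banded bridge splitting by perturbing a bridge sphere until the bands are carried by the sphere, and then unfold that structure into the three tangles and three trivial disk systems. The one place where you are too quick is the claim that the pairwise intersections being trivial tangles, and the middle piece $(X_2,\D_2)$ being a trivial disk system, is ``automatic from the bridge position with respect to $\Sigma$.'' It is not. Having $L_\pm$ in bridge position and the bands in a collar of $\Sigma$ is not enough: the precise condition needed is that the band cores become surface-framed arcs $y^*\subset\Sigma$ admitting a \emph{dual} collection of shadows $\A_{12}^*$ for the lower tangle, meaning $\A_{12}^*\cup y^*$ is a disjoint union of embedded arcs (in particular, with no closed components). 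That duality is what forces each band to separate its component of $\A_{12}\cup\ups$, which is exactly what one uses to build bridge disks for the band-surgered tangle $(\A_{12})_\ups$ and to see that $\D_2$ compresses to a disjoint union of disks in $\pd X_2$ (this is the content of the paper's Lemma \ref{lemma:trivialarcs}). Without the duality condition the band-surgered tangle can easily fail to be trivial, so this verification is the technical heart of the construction rather than cosmetic packaging. Achieving the duality condition is also a concrete extra step in the perturbation stage: beyond embedding the projected band cores disjointly with the correct framing, one must perturb near the band endpoints so that each band is flanked by bridge disks, and then run a cut-and-paste argument to choose the remaining shadows disjoint from the bands. With that condition stated and checked, your outline matches the paper's proof.
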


In addition, bridge trisections give rise to a new diagrammatic presentation for knotted surfaces.  A diagram for a tangle $(B,\A)$ is a generic projection of $\A$ to a disk $E$ together with crossing information at each double point of the projection, and any two tangle diagrams with the same number of strands can be glued together to get a classical link diagram.  We define a \emph{tri-plane diagram} $\Pau$ to be a triple of $b$--strand trivial tangle diagrams $(\Pau_{12},\Pau_{23},\Pau_{31})$ having the property that $\Pau_{ij} \cup \overline{\Pau_{ki}}$ is a diagram for an unlink $L_i$, where $\overline{\Pau_{ki}}$ denotes the mirror image of $\Pau_{ki}$.

We naturally obtain a tri-plane diagram from a bridge trisection by an appropriate projection, and conversely, every tri-plane diagram $\Pau$ gives rise to a bridge trisection of a knotted surface $\K(\Pau)$ in a prescribed way.  Details are supplied in Section~\ref{sec:prelims}, and the following corollary is immediate.

\begin{corollary}\label{coro:tri-plane-exist}
For every knotted surface $\K$ in $S^4$, there exists a tri-plane diagram $\Pau$ such that $\K=\K(\Pau)$.
\end{corollary}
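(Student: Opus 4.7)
The plan is to apply Theorem \ref{thm:existence} to obtain a bridge trisection of $\K$, and then convert the three trivial tangles into tangle diagrams by choosing compatible generic projections; the tri-plane axiom will then follow directly from the definition of a bridge trisection.

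First, by Theorem \ref{thm:existence}, there exists a $(b;c_1,c_2,c_3)$--bridge trisection $(S^4,\K) = (X_1,\D_1)\cup(X_2,\D_2)\cup(X_3,\D_3)$. The three pairwise intersections $(B_{ij},\A_{ij})$ are $b$--strand trivial tangles sharing the common bridge sphere $\Sigma$, on which all $2b$ endpoints of each $\A_{ij}$ lie.

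Second, I would fix a common model for the shared boundary by identifying $\Sigma$ with the standard 2--sphere with $2b$ marked points. For each pair $i\neq j$, choose an identification of the 3--ball $B_{ij}$ with a standard 3--ball extending this identification on $\Sigma$, and then choose a generic projection of $\A_{ij}$ onto a disk $E_{ij}$. Recording crossing data at each double point yields a trivial $b$--strand tangle diagram $\Pau_{ij}$ whose endpoints on $\partial E_{ij}$ agree, up to the standard mirror convention for stacking tangles, with those of $\Pau_{ki}$.

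Finally, set $\Pau = (\Pau_{12},\Pau_{23},\Pau_{31})$. To check that $\Pau$ is a tri-plane diagram, note that for each $i$ the link $L_i = \partial \D_i = \A_{ij}\cup_\Sigma\A_{ki} \subset Y_i$ is an unlink, since $(X_i,\D_i)$ is a trivial $c_i$--disk system. By construction, $\Pau_{ij}\cup\overline{\Pau_{ki}}$ is a diagram of $L_i$, hence a diagram of an unlink, verifying the tri-plane axiom. Applying the prescription that defines $\K(\Pau)$ to this diagram recovers precisely the original bridge trisection, so $\K = \K(\Pau)$. No step is a genuine obstacle here; the mildly delicate point is arranging the three projections so that the endpoint data match along $\partial E_{ij}$, but this is routine once one fixes a common parametrization of $\Sigma$. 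The corollary is essentially a diagrammatic rephrasing of Theorem \ref{thm:existence}.
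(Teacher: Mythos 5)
Your proof is correct and follows the same route as the paper: the paper deduces the corollary immediately from Theorem \ref{thm:existence} together with the projection construction of Subsection \ref{subsec:introdiags} (choose a tri-plane $E_{12}\cup E_{23}\cup E_{31}$ with the bridge points on the common boundary circle, project each trivial tangle with crossing data, and use Proposition \ref{prop:trivialdisks} to see that $\K(\Pau)$ recovers $\K$). No differences worth noting.
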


A simple example of a nontrivial tri-plane diagram is shown in Figure \ref{fig:SpunTrefoilDiag}.  The diagram describes the spun trefoil, a knotted 2--sphere obtained from the trefoil knot.  Spun knots and twist spun knots provide us with many interesting examples of nontrivial bridge trisections and tri-plane diagrams and are explored in Section \ref{sec:exs}.

\begin{figure}[h!]
\centering
\includegraphics[scale = .4]{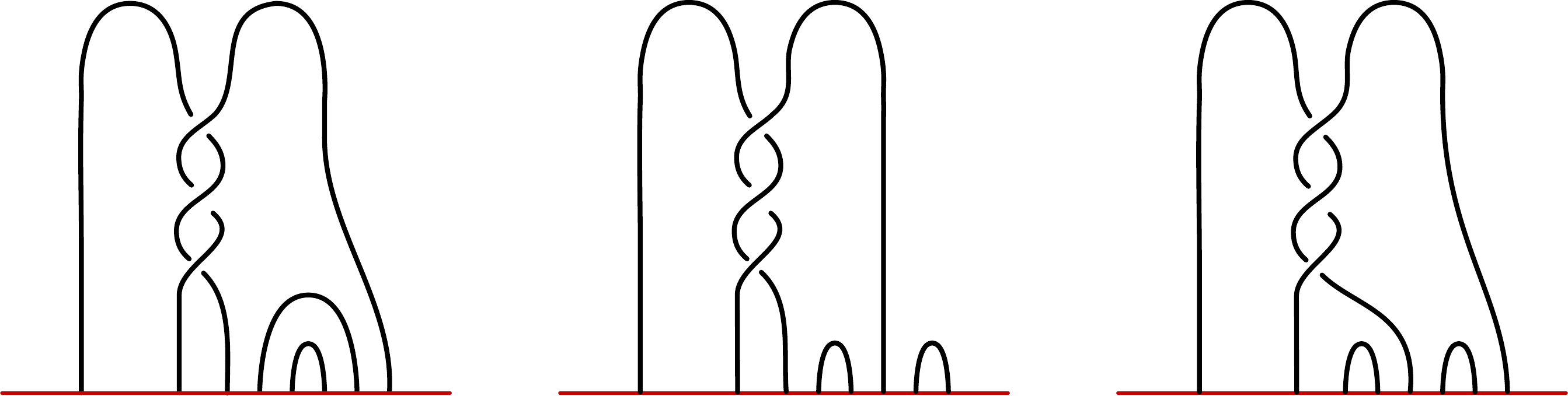}
\caption{A $(4,2)$--bridge tri-plane diagram for the spun trefoil.}
\label{fig:SpunTrefoilDiag}
\end{figure}

\begin{remark}
There are several other existing diagrammatic theories for knotted surfaces in $S^4$.  The interested reader may wish to investigate the immersed surface diagrams in $S^3$ studied by Carter-Saito \cite{CS} and Roseman \cite{roseman}, the braid presentations studied by Kamada \cite{kamada:braid}, and the planar diagrams known as \emph{$ch$-diagrams} studied by Yoshikawa \cite{yoshikawa}.  We recommend \cite{CKS} for a general overview.
\end{remark}

As with Heegaard splittings, classical bridge decompositions, and trisections of 4--manifolds, there is a natural stabilization operation associated to bridge trisections of knotted surfaces.  We describe this operation in detail in Section \ref{sec:stab}, where we relate this stabilization move to a similar operation on banded bridge splittings.  In Section~\ref{sec:unique}, we use this correspondence and prove the uniqueness result below.  We consider two trisections $\T$ and $\T'$ of a knotted surface $\K$ in $S^4$ to be \emph{equivalent} if there is a smooth isotopy of $(S^4,\K)$ carrying the components of $\T$ to the corresponding components of $\T'$. (A more detailed description of equivalence can be found in Section \ref{sec:unique}.)

\begin{theorem}\label{thm:uniqueness}
Any two bridge trisections of a given pair $(S^4,\K)$ become equivalent after a sequence of stabilizations and destabilizations.
\end{theorem}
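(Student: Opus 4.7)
My plan is to reduce Theorem~\ref{thm:uniqueness} to a 3-dimensional uniqueness statement for the banded bridge splittings used to prove Theorem~\ref{thm:existence}, and then to invoke the known move calculus for banded descriptions of knotted surfaces in $S^4$. Each bridge trisection $\T = (X_1,\D_1)\cup (X_2,\D_2)\cup (X_3,\D_3)$ canonically produces a banded bridge splitting $\B(\T)$: one boundary 3--sphere $Y_i=\partial X_i$ inherits a bridge splitting of the unlink $L_i=\partial\D_i$, while the remaining disk system (say $\D_j$) records a collection of bands on $L_i$ whose band surgery yields the unlink $L_k=\partial\D_k$. Each such banded bridge splitting reconstructs $\K$ up to isotopy, so it suffices to show that the banded bridge splittings associated with the two given trisections are related by the appropriate moves.

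First, I would verify that a single bridge trisection stabilization of $\T$ corresponds to a standard stabilization of $\B(\T)$, namely a perturbation of the underlying bridge splitting (the introduction of a cancelling pair of bridge arcs) together with the insertion of a trivial band. Second, I would establish a move theorem for banded bridge splittings: any two banded bridge splittings of a fixed $\K$ become equivalent after a sequence of (i) perturbations of the bridge splitting, supplied by the Reidemeister--Singer theorem for bridge splittings of links in $S^3$, (ii) ambient isotopies and slides of the bands, and (iii) births and deaths of trivial band pairs. This should follow by combining the standard move calculus for banded unlink presentations of knotted surfaces (in the spirit of Kamada and Swenton) with the bridge-splitting version of Reidemeister--Singer.

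Third, I would check move by move that each of these banded moves can be realized by a sequence of bridge trisection stabilizations and destabilizations. This final step is where the principal technical work resides, and the main obstacle will be band slides whose supporting arcs meet the central sphere $\Sigma$; such slides do not a priori preserve the trisected structure. My plan is to first perform enough bridge trisection stabilizations to isolate the band of interest inside a single tangle $(B_{ij},\A_{ij})$, after which the slide becomes an ambient isotopy that respects the trisection. Any remaining imbalance among the parameters $c_i$ is then equalized by a final round of trivial stabilizations, completing the proof.
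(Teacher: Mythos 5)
Your overall architecture coincides with the paper's: convert each trisection to a banded bridge splitting, invoke the move calculus for banded (un)link presentations, realize each move by trisection stabilizations and destabilizations, and finish with a stable-equivalence statement for the underlying bridge splittings. However, there are three concrete gaps. First, the move calculus of Swenton and Kearton--Kurlin consists of cup/cap moves, band \emph{slides}, and band \emph{swims}; your list omits the band swim, which is not an ambient isotopy of the banded link and is not a birth/death of a trivial band pair, so it cannot be absorbed into your items (ii) or (iii). Realizing a band swim requires its own argument: one must project the swimming arc $z$ onto $\Sigma$, remove its self-crossings and its crossings with the arcs $y^*$ by perturbation, and correct its framing, before the swim becomes an isotopy supported in the interior of $X_2$. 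Without this case the reduction to the band-move theorem is incomplete.

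Second, your mechanism for band slides --- stabilize until the band is ``isolated inside a single tangle'' --- does not address the actual difficulty. The bands are already pushed into one tangle; the problem is the connecting arc $z\subset L$, which alternates between $\A_{12}$ and $\A_{31}$ and meets $\Sigma$ in many points. To make the slid band surface-framed one must \emph{unperturb} (destabilize) repeatedly so that $z$ becomes a single arc of $\A_{12}$ with a shadow in $\Sigma$; stabilization alone does not accomplish this, and this is precisely where destabilizations enter the argument in an essential way. Third, after all band moves are performed you are left with two banded bridge splittings of the \emph{same} banded link $(L,\ups)$ whose bridge spheres may differ. The plain Reidemeister--Singer statement for links in $S^3$ is not directly applicable here, because the common perturbation must keep the bands surface-framed and dual to shadows throughout; one needs a common-perturbation theorem for the tangle $L\setminus\nu(\ups)$ in the punctured 3--sphere $S^3\setminus\nu(\ups)$ (after stabilizing so that each arc of $\A_{12}$ meets at most one arc of $y^*$), and only then can the bands be reglued. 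Your closing step of ``equalizing the $c_i$ by trivial stabilizations'' is not needed and does not substitute for this.
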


By interpreting the stabilization operation diagrammatically, we prove that there is a set of diagrammatic moves, called \emph{tri-plane moves}, that suffice to pass between any two tri-plane diagrams of a given knotted surface.  The collection of moves is described in Subsections \ref{subsec:intromoves} and \ref{subsec:stab}.

\begin{theorem}\label{thm:tri-planemoves}
Any two tri-plane diagrams for a given knotted surface are related by a finite sequence of tri-plane moves.
\end{theorem}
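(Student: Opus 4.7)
The plan is to deduce the theorem from Theorem~\ref{thm:uniqueness} by way of an intermediate \emph{diagrammatic rigidity} statement: any two tri-plane diagrams presenting the same bridge trisection are related by a finite sequence of interior tri-plane moves (those of Subsection~\ref{subsec:intromoves}, as opposed to the stabilization move of Subsection~\ref{subsec:stab}). Granting this, the argument runs as follows. Let $\Pau$ and $\Pau'$ be tri-plane diagrams for $\K$, giving rise to bridge trisections $\T(\Pau)$ and $\T(\Pau')$. By Theorem~\ref{thm:uniqueness} these trisections become equivalent after a sequence of stabilizations and destabilizations. I would next verify that each bridge trisection stabilization of Section~\ref{sec:stab} is realized on the diagrammatic level by the tri-plane stabilization move of Subsection~\ref{subsec:stab}: the new bridge arc introduced by the stabilization projects to a short locally unknotted strand that is inserted into the two adjacent tangle diagrams in a prescribed way. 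Performing the corresponding diagrammatic (de)stabilizations on $\Pau$ and $\Pau'$ reduces to the case where the two diagrams present the same bridge trisection, and the intermediate rigidity statement finishes the proof.

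To establish the rigidity statement, fix a bridge trisection $\T = (X_1,\D_1) \cup (X_2,\D_2) \cup (X_3,\D_3)$. A tri-plane diagram for $\T$ amounts to a choice of projection of each trivial tangle $(B_{ij},\A_{ij})$ onto a disk $E_{ij}$, together with compatible identifications of the $2b$ bridge points on the central surface $\Sigma$. Given two such choices, a standard general-position argument applied to a generic path through the space of projections produces finitely many codimension-one events: Reidemeister-type moves in the interior of a single $\Pau_{ij}$, planar isotopies, and boundary events that cyclically permute or reorder the bridge points shared between adjacent tangles. Each event is exactly one of the interior tri-plane moves, by the same mechanism that yields the Reidemeister theorem for tangles rel boundary. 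Since the unlink conditions $\Pau_{ij}\cup\overline{\Pau_{ki}}$ are preserved throughout an ambient isotopy of $\T$, every diagram encountered along the path is a legitimate tri-plane diagram, and concatenating these elementary moves yields the required finite sequence.

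The main obstacle I anticipate is the compatibility across the three tangles. Because the bridge points lie on the shared surface $\Sigma$, a move that reorders or repositions them necessarily alters two of the three tangle diagrams simultaneously, and all three unlink presentations $\Pau_{ij}\cup\overline{\Pau_{ki}}$ must remain valid at every intermediate step. Consequently the interior tri-plane moves cannot be defined purely within a single disk $E_{ij}$; the boundary-affecting moves must act in a coordinated fashion on the two tangles meeting along a common set of bridge points, and one must check that any boundary reparameterization produced by a generic isotopy of $\T$ decomposes into such coordinated elementary moves. Calibrating the move list in Subsection~\ref{subsec:intromoves} so that it is simultaneously closed, coordinated, and sufficient for the general-position reduction above is the technical heart of the argument; once this calibration is in place, the rest of the proof is a routine Reidemeister-style reduction carried out uniformly across all three pairs of adjacent tangle diagrams.
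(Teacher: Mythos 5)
Your proposal follows essentially the same route as the paper: the ``diagrammatic rigidity'' statement is precisely Lemma \ref{lemma:isotopymoves} (proved there by decomposing an isotopy of the spine into a braid of the $2b$ bridge points on $\Sigma$, realized by mutual braid transpositions, followed by an isotopy rel boundary, realized by interior Reidemeister moves), and the reduction via Theorem \ref{thm:uniqueness} together with the diagrammatic realization of stabilization from Section \ref{sec:stab} is exactly the paper's argument. One small correction: since all $2b$ bridge points lie on the single circle $e=\pd E_{12}=\pd E_{23}=\pd E_{31}$, a boundary braiding event affects all three tangle diagrams simultaneously (not just two), which is exactly why the mutual braid transposition is defined as a coordinated move on all three.
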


For any knotted surface $\K$ in $S^4$, let $b(\K)$ denote the \emph{bridge number} of $\K$, where
$$b(\K) = \min\{b\,|\, \text{$\K$ admits a $b$--bridge trisection}\}.$$
A natural first goal is to understand surfaces of low bridge number, the collection of which we expect to include unknotted surfaces.

An orientable surface $\K$ in $S^4$ is said to be \emph{unknotted} if it bounds a handlebody~\cite{hoso-kawa}. A precise characterization of unknotted non-orientable surfaces is given in Section \ref{sec:class}, where we give a simple argument that surfaces admitting 1-- and 2--bridge trisections are unknotted and that the trisections are standard.

Given a knotted surface $\K$ in $S^4$, we can consider the double cover $X(\K)$ of $S^4$ branched over $\K$.  A $(b;c_1,c_2,c_3)$--bridge trisection of $\K$ gives rise to a $(b-1;c_1-1,c_2-1,c_3-1)$--trisection of the closed 4--manifold $X(\K)$.  Theorems in \cite{msz} and \cite{mz:genus2} classify balanced and unbalanced genus two trisections of 4--manifolds, respectively.  In particular, every genus two trisection is standard, and we obtain the following result as a corollary. 

\begin{theorem}\label{thm:standard}
Every knotted surface $\K$ with $b(\K)\leq 3$ is unknotted and any bridge trisection of $\K$ is standard.
\end{theorem}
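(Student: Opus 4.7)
The plan is to invoke the branched double cover construction together with the existing classification of low-genus trisections of closed 4-manifolds. Given any $(b; c_1, c_2, c_3)$-bridge trisection of $\K$ with $b \leq 3$, the paragraph preceding the theorem describes how to lift it through the double cover $X(\K) \to S^4$ branched over $\K$, producing a $(b-1; c_1-1, c_2-1, c_3-1)$-trisection of the closed 4-manifold $X(\K)$, which has genus $b-1 \leq 2$.

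I would then apply the classification theorems of \cite{msz} (balanced case) and \cite{mz:genus2} (unbalanced case), which together assert that every genus $\leq 2$ trisection of a closed 4-manifold is standard. In particular, the induced trisection on $X(\K)$ is the standard trisection of a connected sum of copies of $S^4$, $\CP^2$, $\overline{\CP^2}$, and $S^1 \times S^3$. The remaining step is to descend this standardness back downstairs: for each standard trisected 4-manifold $X$ on this short list, one verifies that involutions on $X$ with $S^4$ quotient and 2-dimensional fixed set are, up to equivariant isotopy, exactly those coming from the branched double cover of an unknotted surface equipped with its standard bridge trisection. Once this is in hand, the theorem follows immediately: $\K$ is unknotted and the original bridge trisection is standard.

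The cases $b = 1$ and $b = 2$ are slated for a simple direct treatment in Section \ref{sec:class}, so the essential content is concentrated in $b = 3$. The principal obstacle is the equivariant rigidity step for genus $2$ covers: one must enumerate the (finite) list of standard genus 2 trisected 4-manifolds compatible with the Euler characteristic identity $\chi(\K) = c_1 + c_2 + c_3 - b$, and check case by case that each admits essentially a unique involution whose quotient data yields a standard bridge trisection of an unknotted surface. With this rigidity established, the theorem is indeed a corollary of the cited classifications, as anticipated in the hint paragraph above.
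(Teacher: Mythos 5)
Your first step (lift to the branched double cover, get a genus $\leq 2$ trisection of $X(\K)$, and quote \cite{msz} and \cite{mz:genus2} to conclude it is standard) matches the paper. But the descent step, which you correctly identify as the principal obstacle, is the entire content of the theorem, and the route you sketch for it --- enumerate the standard genus two trisected 4--manifolds and show each admits an essentially unique involution with quotient $S^4$ and 2--dimensional fixed set --- is not established and is not likely to be establishable in that form. Equivariant classification of smooth involutions on 4--manifolds is in general out of reach, and more pointedly, standardness of $X(\K)$ as a 4--manifold cannot by itself certify that $\K$ is unknotted: for example, double covers of $S^4$ branched over suitable twist-spun knots are again standard spheres, so there exist genuinely knotted surfaces whose branched double covers are standard. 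What must descend is not the 4--manifold but the \emph{sequence of moves} witnessing standardness of the trisection.

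The paper's proof supplies exactly this mechanism, and it is 2-- and 3--dimensional rather than 4--dimensional. The spine of $\T$ lifts to a triple of genus two handlebodies, and a choice of bridge disks lifts to a triple of cut systems. Theorem 1.3 of \cite{mz:genus2} gives a finite sequence of handleslides carrying these cut systems to a standard triple. The key ingredient you are missing is the result of \cite{haas-susskind}: on a genus two surface every cut system can be arranged to respect the hyperelliptic involution, so each cut system in the sequence descends to a collection of bridge disks for the tangles $(B_{ij},\A_{ij})$, and each handleslide upstairs descends to a bridge disk slide downstairs. Since the terminal triple upstairs is standard, the terminal bridge disk systems downstairs are standard, whence $\T$ is standard. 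This crucially uses genus two (the hyperelliptic involution is central in the genus two mapping class group), which is why the argument stops at $b=3$. Without this, or some substitute for it, your proposal does not close.
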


More generally, we may consider the collection of all bridge trisections of the unknotted 2--sphere $\U \subset S^4$.  Theorem \ref{thm:standard} and work of Otal on 3--dimensional bridge splittings \cite{otal} motivate the following question.

\begin{question}\label{question:unknot}
	Is every $b$--bridge trisection of $\U$ standard?  Equivalently, is every $b$--bridge trisection of $\U$ with $b>1$ stabilized?
\end{question}

In contrast to the case $b(\K) \leq 3$, in Section \ref{sec:exs} we describe bridge trisections for certain classes of knotted surfaces, including spun knots and twist-spun knots.  From this it follows that there are infinitely many distinct 2--knots admitting $(4,2)$--bridge trisections.  In fact, we prove the following.

\begin{theorem}\label{thm:nontrivial}
	Let $b\geq 2$.  There exist infinitely many distinct 2--knots with bridge number $3b-2$.
\end{theorem}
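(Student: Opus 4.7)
The plan is to produce, for each $b \geq 2$, an infinite family of 2-knots each admitting a $(3b-2,b)$-balanced bridge trisection, and to show that the value $3b-2$ is sharp by a fundamental-group lower bound. Write $\mu(\K)$ for the meridional rank of $\pi_1(S^4\setminus\K)$. My first task is to prove the inequality
$$b(\K) \geq 3\mu(\K) - 2$$
for every 2-knot $\K$. Suppose $\K$ admits a $(b;c_1,c_2,c_3)$-bridge trisection. Since $\D_i$ is a trivial $c_i$-disk system, $X_i\setminus\D_i$ deformation retracts to a wedge of $c_i$ circles and $\pi_1(X_i\setminus\D_i)$ is free of rank $c_i$, generated by the $c_i$ disk-meridians. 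A boundary circle of any disk in $\D_i$ is a closed curve in $L_i\subset\partial X_i$; because $\A_{ij}$ and $\A_{ki}$ consist of proper arcs with endpoints on $\Sigma$, such a circle must cross $\Sigma$ at least twice, so every disk in $\D_i$ carries at least one arc from each of $\A_{ij}$ and $\A_{ki}$. Consequently each disk-meridian of $\D_i$ is conjugate to some tangle-meridian of $B_{ij}\setminus\A_{ij}$, and the latter is identified in $X_j\setminus\D_j$ with a disk-meridian of $\D_j$. A Van Kampen computation with the cover $\{X_1\setminus\D_1,X_2\setminus\D_2,X_3\setminus\D_3\}$ then shows that the $c_i$ meridians of a single $\D_i$ already generate $\pi_1(S^4\setminus\K)$, so $\mu(\K)\leq c_i$. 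Summing and using $\chi(\K)=2=c_1+c_2+c_3-b$ gives $b\geq 3\mu(\K)-2$.

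Next I would invoke the spinning construction. For any classical knot $K$ of bridge number $\beta$, Section~\ref{sec:exs} produces an explicit $(3\beta-2,\beta)$-balanced bridge trisection of the spun knot $\text{Spin}(K)$ (as illustrated for the trefoil in Figure~\ref{fig:SpunTrefoilDiag}), so $b(\text{Spin}(K))\leq 3\beta-2$. The standard fact $\pi_1(S^4\setminus\text{Spin}(K))\cong\pi_1(S^3\setminus K)$ gives $\mu(\text{Spin}(K))=\mu(K)$. For each $b\geq 2$, I would take the family of torus knots $K_q=T(b,q)$ with $q>b$ and $\gcd(b,q)=1$: Schubert's theorem gives $b(K_q)=b$, and a theorem of Rost--Zieschang gives $\mu(K_q)=b$ as well. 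Combining,
$$3b-2 \;=\; 3\mu(K_q)-2 \;\leq\; b(\text{Spin}(K_q)) \;\leq\; 3b-2,$$
so $b(\text{Spin}(K_q))=3b-2$. Distinct values of $q$ yield non-isomorphic knot groups (detected by Alexander polynomials) and hence pairwise distinct 2-knots, producing the required infinite family.

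The hard part is the lower bound $b(\K)\geq 3\mu(\K)-2$. The algebra is routine, but the Van Kampen step depends on the geometric claim that the $c_i$ disk-meridians of any \emph{single} sector already generate the global fundamental group. This rests on the observation that every disk of $\D_i$ meets both of its adjacent $b$-strand tangles, which I would establish by analyzing how each closed component of $L_i$ decomposes into arcs of $\A_{ij}\sqcup\A_{ki}$. A secondary but essential input is the appeal to Rost--Zieschang: it supplies the infinite family $\{T(b,q)\}$ realizing $\mu(K)=b(K)=b$ for every $b\geq 2$ without requiring the full Cappell--Shaneson conjecture.
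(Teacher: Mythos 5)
Your proposal follows the paper's proof essentially verbatim: spun torus knots $\Ss(T_{b,q})$, the explicit $(3b-2,b)$--bridge trisection of the spin for the upper bound, and the lower bound $b(\K)\geq 3\,\mrk(\K)-2$ obtained from the meridional presentation of a trisection (Corollary \ref{coro:meridional}) together with Rost--Zieschang. The one caveat is that $\mrk(\Ss(K))=\mrk(K)$ does \emph{not} follow from the group isomorphism alone, since meridional rank is not an invariant of the abstract group; the inequality $\mrk(\Ss(K))\geq\mrk(K)$ that your argument actually needs is Proposition \ref{prop:mrkspin}, whose proof projects a meridional generating set for $\pi(\Ss(K))$ back into $B^3\setminus\nu(K^\circ)$ and invokes Dehn's lemma.
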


The 2--knots constructed in Theorem \ref{thm:nontrivial} have balanced $(3b-2,b)$--bridge trisections and are formed by applying the spinning operation to torus knots.  See Section~\ref{sec:exs} for details.
 
Turning to questions about the knot groups, one of the most interesting conjectures in the study of knotted surfaces is the following.

\begin{UC}\label{conj:UC}
	A knotted surface is unknotted if and only if the fundamental group of the surface exterior is cyclic.	
\end{UC}

The Unknotting Conjecture is known to be true in the topological category for orientable surfaces \cite{freedman:disk,hillman-kawauchi,kawauchi:revised} and for projective planes \cite{lawson}. On the other hand, there are certain higher genus nonorientable counterexamples in the smooth category \cite{fkv88,fkv87}.  For a knotted surface $\K$ equipped with a bridge trisection, we have the next result regarding the fundamental group of the complement of $\K$.

\begin{proposition}
	Let $\K$ be a knotted surface in $S^4$ admitting a $(b;c_1,c_2,c_3)$--bridge trisection. Then, $\pi_1(S^4\setminus\K)$ has a presentation with $c_i$ generators and $b-c_j$ relations for any choice of distinct $i,j\in\{1,2,3\}$.
\end{proposition}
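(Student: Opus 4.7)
The plan is to apply Seifert--van Kampen twice and then finish with a Tietze reduction. Let $k$ denote the third index, so $\{i,j,k\} = \{1,2,3\}$.

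First, I would decompose $S^4 \setminus \K = (X_k \setminus \D_k) \cup ((X_i \cup X_j) \setminus (\D_i \cup \D_j))$ with path-connected intersection $Y_k \setminus L_k$, where $Y_k = \partial X_k$. Because $(X_k, \D_k)$ is a trivial $c_k$-disk system, both $\pi_1(X_k \setminus \D_k)$ and $\pi_1(Y_k \setminus L_k)$ are free of rank $c_k$, generated by the meridians of the $c_k$ components of $L_k$; the inclusion-induced map identifies these generators, hence is an isomorphism. Van Kampen then collapses the amalgamated product and yields $\pi_1(S^4 \setminus \K) \cong \pi_1((X_i \cup X_j) \setminus (\D_i \cup \D_j))$. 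A second application of van Kampen to the splitting $(X_i \cup X_j) \setminus (\D_i \cup \D_j) = (X_i \setminus \D_i) \cup (X_j \setminus \D_j)$, glued along $B_{ij} \setminus \A_{ij}$ (whose fundamental group is free of rank $b$ on arc meridians), gives
$$\pi_1(S^4 \setminus \K) \cong F_{c_i} *_{F_b} F_{c_j} = \langle x_1, \ldots, x_{c_i},\, y_1, \ldots, y_{c_j} \mid r_1, \ldots, r_b \rangle,$$
a presentation with $c_i + c_j$ generators (meridians of the disks in $\D_i$ and $\D_j$) and $b$ relations $r_\ell$, one per arc $\alpha_\ell \in \A_{ij}$, each equating the meridian of $\alpha_\ell$ computed in $F_{c_i}$ with its meridian computed in $F_{c_j}$.

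To reach the target presentation I would perform $c_j$ Tietze eliminations. The unlink $L_j = \A_{ij} \cup \A_{jk}$ has $c_j$ components, each presented in $b$-bridge form, so each component must contain at least one arc of $\A_{ij}$ (a component disjoint from $\A_{ij}$ would close up entirely inside the $3$-ball $B_{jk}$, which is impossible). For $n = 1, \ldots, c_j$, pick a preferred arc $\alpha_n \in \A_{ij}$ on the $n$-th component of $L_j$ and choose the free generator $y_n$ of $F_{c_j}$ to equal its meridian---a valid basis choice, since such arc meridians (one per component) freely generate $\pi_1(X_j \setminus \D_j)$. The relation $r_n$ then takes the solvable form $y_n = v_n$ with $v_n \in F_{c_i}$, so each $y_n$ may be Tietze-eliminated. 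Substituting into the remaining $b - c_j$ relations (from the non-preferred arcs of $\A_{ij}$) produces relations purely in the $x_m$'s, giving the asserted presentation.

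The main technical point is the Tietze step: one must verify that every component of $L_j$ meets $\A_{ij}$ and that the meridians of $F_{c_j}$ can be chosen compatibly so that each selected relation has the clean form $y_n = v_n$. Both follow from the bridge structure, which guarantees a distinct preferred arc per component of $L_j$ and allows a compatible free basis for $F_{c_j}$.
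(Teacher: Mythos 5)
Your proof is correct, but it takes a genuinely different route from the paper's. The paper converts the trisection $\T$ into a banded link presentation via Lemma \ref{lemma:trimorse}, reads off from Remark \ref{rmk:handles} a handle decomposition of $\K$ with $c_i$ 0--handles, $b-c_j$ 1--handles, and $c_k$ 2--handles, passes to the induced handle decomposition of $S^4\setminus\nu(\K)$ (one 0--handle, $c_i$ 1--handles, $b-c_j$ 2--handles, $c_k$ 3--handles, one 4--handle), and reads the presentation from the 1-- and 2--handles. You instead compute directly with Seifert--van Kampen on the trisection pieces: collapsing along $X_k\setminus\D_k$ (valid, since the inclusion of $Y_k\setminus L_k$ is a surjection between free groups of rank $c_k$, hence an isomorphism), obtaining $F_{c_i}*_{F_b}F_{c_j}$, and then Tietze-eliminating the $c_j$ generators of $F_{c_j}$ using one preferred arc of $\A_{ij}$ per component of $L_j$. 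Your two supporting claims do hold: every component of $L_j$ meets $\A_{ij}$ because it is in bridge position with respect to $\Sigma$, and meridians of $c_j$ arcs on distinct components of $L_j$ give a free basis of $\pi_1(X_j\setminus\D_j)\cong F_{c_j}$ since the unlink-complement meridians already do and free groups are Hopfian. What each approach buys: the paper's argument is shorter given the banded-link machinery already built in Section \ref{sec:exist} and ties the presentation to the handle-theoretic picture used throughout; yours is self-contained, avoids that machinery, and makes it immediate that the $c_i$ generators are meridians of the disks of $\D_i$ --- the meridionality that the paper only remarks on afterward and then uses for Corollary \ref{coro:meridional}. The only cost is that you must carry out the Tietze reduction and justify the basis choice, which you correctly flag as the technical point.
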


It follows that $(b;c_1,c_2,1)$--surfaces have complements with cyclic fundamental group.  Moreover, by the topological solutions to the Unknotting Conjecture referenced above, we have the next corollary.
\begin{corollary}\label{coro:unknotted}
	Every orientable $(b;c_1,c_2,1)$--surface is topologically unknotted.
\end{corollary}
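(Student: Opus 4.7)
The plan is to combine the preceding proposition with the known topological resolutions of the Unknotting Conjecture for orientable surfaces. Concretely, given a $(b;c_1,c_2,1)$--bridge trisection of an orientable knotted surface $\K$, I would apply the proposition with the choice $i=3$. This yields a presentation of $\pi_1(S^4\setminus\K)$ on $c_3 = 1$ generator (and some number $b-c_j$ of relations, which is irrelevant for the argument). Any group admitting a presentation on a single generator is a quotient of $\Z$, and therefore cyclic.

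Once cyclicity of $\pi_1(S^4\setminus\K)$ is established, the conclusion is immediate from the topological solution to the Unknotting Conjecture for orientable surfaces: by Freedman's disk theorem for 2--knots and the extensions by Hillman--Kawauchi and Kawauchi to arbitrary orientable genus, an orientable knotted surface in $S^4$ with cyclic complement fundamental group is topologically unknotted. These references are the ones already cited in the paragraph immediately preceding the corollary.

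There is effectively no obstacle beyond verifying that the proposition genuinely delivers a presentation in the degenerate case where the number of generators drops to one; this is a degenerate instance of the general construction, and the geometric picture — a trivial $1$--disk system $(X_3,\D_3)$ whose boundary is an unknot normally generating $\pi_1(S^4\setminus\K)$ via the meridian — makes it clear that the count $c_i$ from the proposition is always attained, including the case $c_i=1$. Thus the corollary reduces to a direct application of the proposition and the cited topological theorems, with no further content.
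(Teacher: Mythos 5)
Your proposal is correct and matches the paper's own argument: the paper likewise applies the fundamental group proposition with the index whose $c$ equals $1$ to get a one-generator (hence cyclic, in fact infinite cyclic for orientable $\K$) presentation of $\pi_1(S^4\setminus\K)$, and then invokes the topological unknotting results of Freedman, Hillman--Kawauchi, and Kawauchi. No meaningful differences.
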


Note that the adjective ``orientable'' is important in Corollary \ref{coro:unknotted}, since the Unknotting Conjecture is still open for general non-orientable surface knots.

\subsection*{Organization}\ 

We begin in Section \ref{sec:prelims} by discussing some classical aspects of bridge splittings in dimension three, after which we introduce bridge trisections, tri-plane diagrams, and tri-plane moves in detail and discuss how the branched double cover provides a connection with trisections.  In Section \ref{sec:exist} we prove the existence of bridge trisections and discuss the auxiliary object: banded bridge splittings.  In Section \ref{sec:class}, we give a classification of certain types of bridge trisections, including those up to bridge number three.  In Section \ref{sec:exs}, we describe the spinning and twist-spinning constructions and use them to produce knotted surfaces with arbitrarily large bridge number.  In Section \ref{sec:stab}, we give a detailed discussion of the stabilization and destabilization operation, and in Section \ref{sec:unique}, we prove that any two bridge trisections of a fixed surface have a common stabilization.  

\subsection*{Acknowledgements}\ 

This work benefited greatly from the interest, support, and insight of David Gay, for which the authors are very grateful.  Thanks is also due to Ken Baker, Scott Carter, Cameron Gordon, and Chuck Livingston for many helpful and interesting conversations.
The first author was supported by NSF grant DMS-1400543.  The second author was supported by NSF grant DMS-1203988.

\section{Preliminary topics}\label{sec:prelims}

We will assume that all manifolds are smooth and compact unless otherwise specified, and all 3-- and 4--manifolds are orientable.  We let $\nu( \cdot )$ denote an open regular neighborhood in the appropriate ambient manifold.

For $n=3$ or $4$, a collection $\D$ of properly embedded $(n-2)$--balls in an $n$--ball $X$ is called  \emph{trivial} if all disks are simultaneously isotopic into $\pd X$.  Equivalently, there is a Morse function $h\colon (X,\D) \rightarrow (-\infty,0]$ such that $h_X$ has one index zero critical point, $h^{-1}(0) = \pd X$, and each $(n-2)$--ball in $\D$ contains exactly one index zero critical point of $h_{\D}$.  When $n=3$, we call this pair a \emph{$b$--strand trivial tangle} and denote it $(B,\A)$, where $b = |\A|$.  In the case that $n=4$, we call the pair a \emph{trivial $c$--disk system}, where $c = |\D|$.

\subsection{Bridge splittings in dimension three}\label{sub:bridge3}\ 

Suppose $(B,\A)$ is a trivial tangle.  For each arc $a \in \A$, there is an embedded disk $\Delta_a$ such that $\Delta_a \cap \A = a$ and $\Delta_a \cap \pd B$ is an arc $a^*$ such that $\pd \Delta_a = a \cup a^*$.  We call $\Delta_a$ a \emph{bridge disk}, and we call the arc $a^* \subset \pd B$ a \emph{shadow} of the arc $a$.  Note that a given arc $a$ may have infinitely many different shadows given by infinitely many distinct isotopy classes of bridge disks.  We can always choose a collection $\Delta$ of pairwise disjoint bridge disks for $\A$.

For a link $L \subset S^3$, a \emph{$b$--bridge splitting} of $L$ is a decomposition
\[ (S^3,L) = (B_1,\A_1) \cup_{\Sigma} (B_2,\A_2)\]
such that $(B_i,\A_i)$ is a $b$--strand trivial tangle for $i=1,2$.  The surface $\Sigma$ is called a \emph{$b$--bridge sphere}.  We let $\Sigma_L$ denote $\Sigma \setminus \nu(L)$, and we consider two bridge surfaces $\Sigma$ and $\Sigma'$ to be \emph{equivalent} if $\Sigma_L$ is isotopic to $\Sigma_L'$ in $E(L) = S^3 \setminus \nu(L)$ (in other words, if $\Sigma$ is isotopic to $\Sigma'$ via an isotopy fixing $L$).  It is useful to note that for a bridge splitting of $(S^3,L)$, there is a Morse function $g\colon(S^3,L) \rightarrow \R$ such that $g_{S^3}$ has two critical points, all minima of $g_L$ occur below all maxima of $g_L$, and any level surface which separates the minima from the maxima of $g_L$ is a bridge sphere equivalent to $\Sigma$.

At any point of $L \cap \Sigma$, we may isotope $g$ to introduce an additional pair of canceling critical points for $g_L$, resulting in a new Morse function $g'$ and $(b+1)$--bridge sphere $\Sigma'$.  We call $\Sigma'$ \emph{perturbed} and say that $\Sigma'$ is an \emph{elementary perturbation} of $\Sigma$.  The reverse operation is called \emph{unperturbation}. The bridge sphere $\Sigma'$ is perturbed if and only if there is a pair of bridge disks $\Delta_1$ and $\Delta_2$ on opposite sides of $\Sigma'$ such that $\Delta_1 \cap \Delta_2$ is a single point contained in $L$.  Equivalently, $\Sigma'$ is perturbed if and only if there are arcs $a_1 \in \A_1$ and $a_2 \in \A_2$ with shadows $a_1^*$ and $a_2^*$ such that $a_1^* \cup a_2^*$ is an embedded arc in $\Sigma'$.  Lastly, if $\Sigma^*$ is obtained by a sequence elementary perturbations performed on $\Sigma$, we call $\Sigma^*$ a \emph{perturbation} of $\Sigma$.  Note that elementary perturbations are not unique; perturbing $\Sigma$ at two different points of $L \cap \Sigma$ may induce two distinct $(b+1)$--bridge spheres.  However, if $J$ is a component of $L$, then perturbations about each point of $J \cap \Sigma$ yield equivalent bridge spheres.

As might be expected, the structure of the collection of all bridge spheres for the unknot is rather simple; this is made precise by the next theorem.

\begin{theorem}\label{unknot}\cite{otal}
Every bridge sphere for the unknot is a perturbation of the standard 1--bridge sphere.
\end{theorem}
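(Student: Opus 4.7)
The plan is to induct on the bridge number $b$. The base case $b=1$ is immediate: any two 1-bridge spheres for the unknot are equivalent. For $b \geq 2$ the goal is to show that $\Sigma$ is itself an elementary perturbation of some $(b-1)$-bridge sphere $\Sigma'$; applying the inductive hypothesis to $\Sigma'$ then expresses $\Sigma'$, and hence $\Sigma$, as a perturbation of the standard 1-bridge sphere.

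Per the characterization of perturbation recalled just before the theorem, it suffices to produce bridge disks $\Delta_1 \subset B_1$ and $\Delta_2 \subset B_2$ on opposite sides of $\Sigma$ whose shadows $a_1^*$, $a_2^*$ satisfy $a_1^* \cap a_2^* = \{p\}$ for a single point $p \in U \cap \Sigma$. To do so I would use a spanning disk $D$ for the unknot $U$. First isotope $D$ to minimize $|D \cap \Sigma|$; a standard innermost-disk argument then removes all simple closed curves from $D \cap \Sigma$: an innermost such curve on $D$ bounds a subdisk $D' \subset D$ disjoint from $U$ (since $D'$ lies in the interior of $D$) and from $\Sigma$ away from $\partial D'$, and together with a disk on $\Sigma$ it cobounds a 3-ball in some $B_i$ which provides an isotopy contradicting minimality. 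After this reduction, $D \cap \Sigma$ consists of $b$ disjoint properly embedded arcs with $2b$ endpoints on $U \cap \Sigma$.

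Choose an outermost such arc $\alpha$ on $D$, cutting off a subdisk $\Delta_1 \subset D$ with $\partial \Delta_1 = \alpha \cup \beta$, $\beta \subset U$. Outermostness forces $\beta$ to avoid the remaining $2b-2$ intersection points, so $\beta$ is a single arc of $\A_i$ for some $i$, and $\Delta_1$ lies in $B_i$; say $i=1$, so that $\Delta_1$ is a bridge disk in $B_1$ with shadow $\alpha$. Let $p$ be an endpoint of $\beta$, and let $\beta' \in \A_2$ be the arc of $U$ sharing endpoint $p$ with $\beta$. I would then construct $\Delta_2 \subset B_2$ as a bridge disk for $\beta'$ whose shadow $a_2^*$ runs from $p$ to the other endpoint of $\beta'$ and is disjoint from $\alpha$ except at $p$. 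Granting such a construction, $\Delta_1$ and $\Delta_2$ meet only at $p \in U$, so $\Sigma$ is perturbed and the induction closes.

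The main obstacle I foresee is realizing the prescribed shadow in the preceding step: given an embedded arc on $\Sigma \setminus (U \cap \Sigma)$ connecting the endpoints of $\beta'$, does it arise as the shadow of some bridge disk for $\beta'$ in $B_2$? I would address this using the product structure of the trivial tangle $(B_2, \A_2)$: after identifying it with a standard $b$-strand trivial tangle, $\beta'$ can be pushed toward $\partial B_2$ along any prescribed embedded boundary arc, and the resulting disk is a bridge disk realizing that shadow; embeddedness and disjointness from $\A_2 \setminus \beta'$ are verified from the standard model. Once this realization step is secured, the construction above completes the argument.
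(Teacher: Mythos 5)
First, a remark on scope: the paper does not prove this statement at all --- it is quoted from Otal's work --- so there is no internal argument to compare yours against; I am evaluating your proposal on its own terms. The skeleton is right (induct on $b$, exhibit a cancelling pair of bridge disks on opposite sides of $\Sigma$ meeting in a single point of $U$, unperturb), and the outermost-arc argument on a spanning disk $D$ does correctly produce one bridge disk $\Delta_1$, say in $B_1$, with shadow an arc $\alpha$ of $D\cap\Sigma$.

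The gap is exactly where you flag it, and the fix you propose is false. It is not true that every embedded arc in $\Sigma$ joining the two endpoints of a strand $\beta'$ of the trivial tangle $(B_2,\A_2)$ and avoiding the other punctures is the shadow of a bridge disk. A shadow $a_2^*$ requires an embedded disk in $B_2$ with boundary $\beta'\cup a_2^*$ and disjoint from $\A_2\setminus\beta'$; in particular $\beta'\cup a_2^*$ must be nullhomotopic in $B_2\setminus\nu(\A_2\setminus\beta')$, which is a handlebody with free fundamental group. Already for $b=2$ this fails: if $\gamma$ is the other strand and $a_2^*$ is obtained from a genuine shadow of $\beta'$ by dragging it once around exactly one endpoint of $\gamma$ (staying embedded and between the two punctures of $\gamma$), then $\beta'\cup a_2^*$ is homotopic to a meridian of $\gamma$ and bounds no disk in $B_2\setminus\gamma$, embedded or otherwise. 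So ``push $\beta'$ to the boundary along any prescribed arc'' is not available, and the existence of a bridge disk $\Delta_2\subset B_2$ whose shadow meets $\alpha$ in exactly the point $p$ is not a formality --- it is essentially the entire content of the theorem. Note also that the outermost-region argument on $D$ cannot be rerun to produce $\Delta_2$: if the arcs of $D\cap\Sigma$ are nested in $D$, then all outermost regions of $D$ lie on the same side of $\Sigma$, so the spanning disk by itself may only hand you bridge disks in one of the two balls. Any complete proof must analyze the global intersection pattern of $D$ with $\Sigma$ (how the arcs of $D\cap\Sigma$ sit simultaneously in $D$ and in $\Sigma$), as Otal does, or substitute a different global tool such as thin position; your argument as written omits this step entirely.
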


We say a $b$--bridge sphere $\Sigma$ for a link is \emph{reducible} if there is an essential curve $\gamma \subset \Sigma_L$ which bounds disks $D_1$ and $D_2$ in $B_1 \setminus \A_1$ and $B_2 \setminus \A_2$, respectively.  In this case, $L = L_1 \cup L_2$ is a split link, and $\Sigma = \Sigma_1 \# \Sigma_2$, where $\Sigma_i$ is a $b_i$--bridge sphere for $L_i$ with $b_1 + b_2  = b$.

\begin{theorem}\label{split}\cite{bachman-schleimer}
Every bridge sphere for a split link $L$ is reducible.
\end{theorem}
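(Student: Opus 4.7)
The plan is to adapt the classical proof of Haken's Lemma, with the splitting sphere $S$ for $L$ playing the role of a reducing sphere and the bridge sphere $\Sigma$ playing the role of a Heegaard surface. First I would pick a 2--sphere $S\subset S^3\setminus L$ separating $L_1$ from $L_2$, make $S$ transverse to $\Sigma$, and isotope $S$ rel $L$ to minimize $|S\cap\Sigma|$. The intersection is then a disjoint union of simple closed curves in $\Sigma_L$.

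The next two reductions are standard innermost-disk arguments. First, I would rule out $S\cap\Sigma=\emptyset$: otherwise $S$ would lie in some $B_i$, and one side of $S$ inside $B_i$ would contain all of $L_1$ or all of $L_2$, forcing that sublink to be closed and disjoint from $\Sigma$, which contradicts the no-closed-components condition in the trivial tangle $(B_i,\A_i)$. Second, I would show every curve of $S\cap\Sigma$ is essential in $\Sigma_L$: if some $\gamma$ is inessential, pick one innermost on $\Sigma$ so that the disk $F\subset\Sigma_L$ it bounds is disjoint from all other components of $S\cap\Sigma$, and surger $S$ along $F$ by removing a small annular neighborhood of $\gamma$ on $S$ and capping off with two parallel copies of $F$ pushed slightly off $\Sigma$. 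The result is two spheres $S'\sqcup S''$ in $S^3\setminus L$ whose total intersection with $\Sigma$ is strictly less than $|S\cap\Sigma|$, and at least one of them still separates $L_1$ from $L_2$, contradicting minimality.

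The main step is to extract the reducing curve. An innermost disk $D\subset S\cap B_1$ gives a compressing disk in $B_1\setminus\A_1$ bounded by an essential curve $\gamma\subset\Sigma_L$; the crux is to match it by a compressing disk $D'\subset B_2\setminus\A_2$ with $\partial D'=\gamma$. This is the step I expect to be the main obstacle, because an innermost disk of $S$ on the $B_1$--side need not be matched by an innermost disk of $S$ on the $B_2$--side across $\gamma$: the adjacent planar piece of $S$ in $B_2$ could have several boundary components. My first attempt would be to push the minimization further and show that $|S\cap\Sigma|=1$, so that the component of $S\setminus\gamma$ lying in $B_2$ is automatically a disk; this should follow from iterated innermost surgery exploiting the trivial-tangle structure of each $(B_i,\A_i)$ together with a bookkeeping argument on the dual tree of $S\cap\Sigma$ in $S$. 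If that direct route resists, I would fall back on the double branched cover $X=\Sigma_2(L)\cong\Sigma_2(L_1)\#\Sigma_2(L_2)$, which is reducible: the bridge splitting lifts to a genus-$(b-1)$ Heegaard splitting of $X$, classical Haken's Lemma yields a reducing sphere in $X$, and its equivariant refinement under the branched-cover involution — arranged to be disjoint from the branch locus — descends to the desired reducing curve $\gamma$ on $\Sigma$.
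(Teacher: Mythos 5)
The paper offers no proof of this statement: it is imported verbatim from Bachman--Schleimer, where it is a genuinely substantial theorem, so your attempt has to stand on its own. Your first reductions are fine. Ruling out $S\cap\Sigma=\emptyset$ works because a trivial tangle consists of arcs only, and the innermost-disk surgery removing curves that are inessential in $\Sigma_L$ does produce a sphere that still splits $L$ (since $[S]=[S']+[S'']$ is nonzero in $H_2(S^3\setminus L)$, at least one of $S'$, $S''$ is a splitting sphere, though possibly for a different partition of the components than $L_1\sqcup L_2$ --- a harmless adjustment). The problem is that everything after this point is a sketch of precisely the part of the theorem that is hard, and neither of your two routes closes it.

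Route 1 is circular in effect: once every curve of $S\cap\Sigma$ is essential in $\Sigma_L$, there is no further ``innermost surgery'' available. Pushing an innermost disk $D\subset S\cap B_1$ across $\Sigma$ to reduce $|S\cap\Sigma|$ requires knowing that $\partial D$ bounds a disk in $B_2\setminus\A_2$, which is exactly the conclusion you are trying to reach; the passage from ``all curves essential'' to $|S\cap\Sigma|=1$ is where the real work lies (in Bachman--Schleimer it involves positioning the planar pieces of the essential surface relative to bridge disks of the trivial tangles, boundary-compressions, and a counting argument), and ``bookkeeping on the dual tree'' does not supply it. Route 2 also does not go through as stated: classical Haken's lemma gives a reducing sphere for the lifted Heegaard splitting of the branched double cover, but you then need that sphere to be simultaneously (i) equivariant under the covering involution, (ii) disjoint from the branch locus, and (iii) still meeting the lifted Heegaard surface in a single circle that is essential there. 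There is no off-the-shelf equivariant Haken's lemma delivering all three at once; an invariant sphere that meets the fixed-point set does not descend to a curve in $\Sigma_L$ at all, and arranging disjointness from the branch locus is essentially the original three-dimensional problem in disguise. The crux you correctly flagged therefore remains open in your write-up; the honest course here is the paper's, namely to cite the theorem rather than reprove it.
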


Combining Theorems \ref{unknot} and \ref{split}, we have the following result.

\begin{proposition}\label{linkbridge}
Every bridge sphere for the $n$--component unlink is a perturbation of the standard $n$--bridge sphere.
\begin{proof}
Suppose $\Sigma$ is a $b$--bridge sphere for the $n$--component unlink $L = L_1 \cup \dots \cup L_n$.  By repeated applications of Theorem \ref{split}, we may write $\Sigma = \Sigma_1 \# \dots \# \Sigma_n$, where $\Sigma_i$ is a  $b_i$--bridge surface for the unknot $L_i$.  If $b_i = 1$ for all $i$, then $\Sigma$ is the standard $n$--bridge sphere, and the statement holds vacuously.  Otherwise, $b_i > 1$ for some $i$, in which case $\Sigma_i$, and thus $\Sigma$, is perturbed by Theorem \ref{unknot}.
\end{proof}
\end{proposition}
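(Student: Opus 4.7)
The plan is to combine Theorems \ref{unknot} and \ref{split} inductively on the number of components $n$. The base case $n=1$ is precisely Theorem \ref{unknot}, so the substance lies in setting up the inductive step cleanly.

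For the inductive step, I would begin with a $b$-bridge sphere $\Sigma$ for the $n$-component unlink $L = L_1 \cup \cdots \cup L_n$ with $n \geq 2$. Since $L$ is split, Theorem \ref{split} provides an essential curve $\gamma \subset \Sigma_L$ bounding disks $D_1 \subset B_1 \setminus \A_1$ and $D_2 \subset B_2 \setminus \A_2$. The 2-sphere $D_1 \cup D_2$ separates $S^3$ into two balls, each containing a sub-unlink of $L$, and this decomposition realizes $\Sigma$ as a connect sum $\Sigma' \# \Sigma''$ of bridge spheres for unlinks with strictly fewer components. Iterating (or equivalently, applying the inductive hypothesis directly to $\Sigma'$ and $\Sigma''$) yields a connect-sum decomposition $\Sigma = \Sigma_1 \# \cdots \# \Sigma_n$, where each $\Sigma_i$ is a $b_i$-bridge sphere for the unknot $L_i$ and $\sum b_i = b$.

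At this point, Theorem \ref{unknot} finishes the argument. If $b_i = 1$ for every $i$, then each $\Sigma_i$ is already the standard 1-bridge sphere, so $\Sigma$ is the standard $n$-bridge sphere and the statement holds vacuously. Otherwise, each $\Sigma_i$ with $b_i > 1$ is a perturbation of the standard 1-bridge sphere, and since the standard $n$-bridge sphere is by definition the connect sum of $n$ standard 1-bridge spheres, $\Sigma$ is a perturbation of the standard $n$-bridge sphere.

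The point that I expect to require the most care is the interaction between the connect-sum decomposition and perturbations: one must know that an elementary perturbation performed inside a summand $\Sigma_i$ corresponds to an elementary perturbation of the full bridge sphere $\Sigma$. Intuitively this is clear because a perturbation is supported in an arbitrarily small neighborhood of a point of $L \cap \Sigma$, which can be chosen disjoint from the reducing 2-sphere $D_1 \cup D_2$. Nevertheless this compatibility is the principal technical obstacle, and it should be verified directly from the Morse-theoretic characterization of perturbation given above.
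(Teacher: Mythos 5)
Your proof is correct and follows essentially the same route as the paper's: repeated application of Theorem \ref{split} to decompose $\Sigma$ as a connected sum of bridge spheres for the individual unknotted components, followed by Theorem \ref{unknot}. If anything, you are slightly more complete than the paper, which only concludes that $\Sigma$ is perturbed (leaving implicit the iteration needed to reach the standard sphere), whereas you directly assemble the perturbations of the summands and correctly flag the one compatibility issue — that a perturbation supported in a summand is a perturbation of $\Sigma$ itself — that makes this work.
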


Note that while Theorem \ref{unknot} implies that the unknot has a unique $b$--bridge sphere for every $b$, Proposition \ref{linkbridge} \emph{does not} imply the same is true for an unlink.  For instance, a 2--component unlink has two inequivalent 3--bridge splittings (corresponding to the number of bridges contained in each component).

\subsection{Extending bridge splittings to dimension four}\ 

Here we adapt the notion of a bridge splitting to a knotted surface $\K$ in $S^4$.  Na\"ively, we may attempt to write $(S^4,\K)$ as the union of two trivial disk systems.  However, such a decomposition is severely limiting, as is implied by the following standard proposition.

\begin{proposition}\label{prop:trivialdisks}\cite{kamada:braid}
If $X$ is a 4--ball containing collections $\D_1$ and $\D_2$ of trivial disks such that $\pd \D_1 = \pd \D_2$, then $\D_1$ is isotopic (rel boundary) to $\D_2$ in $X$.
\end{proposition}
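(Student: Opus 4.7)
The boundary link $L = \pd\D_1 = \pd\D_2$ lies in $\pd X \cong S^3$. Since each $\D_i$ is trivial, its isotopy into $\pd X$ carries $L$ to the boundary of a collection of disjoint embedded disks in $\pd X$; transporting these disks back along an isotopy of $\pd X$ shows that $L$ itself bounds disjoint disks in $\pd X$, so $L$ is an unlink. Fix disjoint disks $E_1, \ldots, E_c \subset \pd X$ with $\pd E_j$ the components of $L$, and define a reference trivial disk system $\D_0 \subset X$ by pushing the interiors of the $E_j$ slightly into $X$ through a collar of $\pd X$. The strategy is to show that \emph{every} trivial disk system in $X$ with boundary $L$ is ambient isotopic rel $L$ to $\D_0$; applying this to both $\D_1$ and $\D_2$ and composing the resulting isotopies yields the proposition.

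For this uniqueness claim I invoke the Morse-function characterization of triviality recalled at the start of Section~\ref{sec:prelims}: there is a Morse function $h\colon (X, \D) \to (-\infty, 0]$ with a single index-zero critical point of $h_X$, level set $h^{-1}(0) = \pd X$, and exactly one index-zero critical point of $h_{\D}$ on each component of $\D$. A gradient-like vector field for $h$, chosen to be tangent to $\pd X$ along $\pd X$ and to agree with the fixed collar in a neighborhood of $\pd X$, integrates to a diffeomorphism $\pd X \times (0, 1] \cong X \setminus \{p_0\}$, where $p_0$ is the minimum of $h_X$. Under this product structure, each component of $\D$ meets every level sphere above its own minimum in a single circle isotopic (within that level sphere) to the corresponding component of $L$. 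A fiberwise straightening, supported away from $\pd X$, then carries $\D$ onto $\D_0$; the cap near each minimum of $\D$ is put into standard form by a small local isotopy.

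The main technical obstacle is arranging the straightening isotopy to be the identity on $\pd X$, hence rel $L$. This is handled by the collar-compatible choice of the gradient-like vector field above, combined with the classical fact that an unlink in $S^3$ bounds a system of pairwise disjoint embedded disks that is unique up to rel-boundary ambient isotopy in $S^3$. These two inputs allow the level-by-level straightening to be assembled into a smooth ambient isotopy of $X$ fixing $\pd X$ pointwise, which proves the uniqueness claim and hence the proposition.
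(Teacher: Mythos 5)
The paper offers no proof of this proposition; it is quoted from Kamada, so there is nothing internal to compare against, and you are supplying an argument the paper deliberately omits. Your overall plan --- reduce both systems to a fixed push-in $\D_0$ of a Seifert disk system for the unlink $L$, using the Morse-theoretic characterization of triviality to see that each disk has a single minimum and hence meets every level sphere above that minimum in one circle --- is the standard framework for this result, and the reduction of the proposition to the uniqueness claim for $\D_0$ is sound. (One small slip: the gradient-like vector field must be \emph{transverse} to $\pd X$, pointing along the collar direction, not ``tangent to $\pd X$''; a vector field tangent to $\pd X$ does not flow to the product structure $\pd X \times (0,1]$.)

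The genuine gap is the sentence ``a fiberwise straightening \ldots then carries $\D$ onto $\D_0$,'' which is exactly where the content of the proposition lives. What you have actually established is that $\D$ and $\D_0$ each determine a null-isotopy of $L$: a family of links $C_t \subset \pd X \times \{t\}$ shrinking each component to a point as $t$ descends to the level of its minimum. Two such shrinkings need not agree fiberwise, and knowing that $C_t$ is isotopic to the corresponding components of $L$ in each \emph{individual} level sphere does not produce a continuous family of level-sphere isotopies equal to the identity at $t=1$; asserting that it does is essentially a restatement of the proposition. The uniqueness of disk systems for unlinks in $S^3$, which you invoke, is applied one level at a time and supplies no coherence across levels. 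The standard repair is: construct a continuously varying family of pairwise disjoint disks $\delta_t \subset \pd X \times \{t\}$ with $\pd \delta_t = C_t$ by taking small disks just above each minimum and flowing upward by isotopy extension; the union of the $\delta_t$ together with the cone points is a disjoint collection of embedded 3--balls in $X$, each cobounded by a component of $\D$ and the corresponding disk $\delta_1 \subset \pd X$. Pushing $\D$ across these 3--balls is an isotopy rel $L$ onto the push-in of the system $\{\delta_1\}$, and only then does the classical uniqueness of Seifert disk systems for unlinks in $S^3$ finish the argument by identifying that push-in with $\D_0$. With this step supplied, your outline becomes a correct proof.
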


In other words, a trivial disk system $(X,\D)$ is determined up to isotopy by the unlink $L=\pd \D$ in $\pd X = S^3$.  Thus, if $(S^4,\K)$ can be decomposed into two trivial disk systems, then $(S^4,\K)$ is the double of a single trivial disk system, and as such $\K$ is an unlink.  We rectify the situation by decomposing $(S^4,\K)$ into \emph{three} trivial disk systems as discussed in the introduction.  Recall that a $b$--bridge trisection $\T$ of $(S^4,\K)$ is a decomposition
\[ (S^4,\K) = (X_1,\D_1) \cup (X_2,\D_2) \cup (X_3,\D_3),\]
where
\be
\item $(X_i,\D_i)$ is a $c_i$--disk trivial system,
\item $(B_{ij},\A_{ij}) = (X_i,\D_i) \cap (X_j,\D_j)$ is a $b$--strand trivial tangle, and
\item $(\Sigma,\mathbf p) = (X_1,\D_1) \cap (X_2,\D_2) \cap (X_3,\D_3)$ is a 2--sphere $\Sigma$ containing a set of $\mathbf p$ of $2b$ points.
\ee
We call the subset $\Ss = (B_{12},\A_{12}) \cup (B_{23},\A_{23}) \cup (B_{31},\A_{31})$ the \emph{spine} of the bridge trisection, and we say that two bridge trisections are \emph{equivalent} if their spines are smoothly isotopic.  Observe that $(\pd X_i,\pd \D_i) = (B_{ij},\A_{ij}) \cup_{\Sigma} (B_{ki},\A_{ki})$ is a $b$--bridge splitting of the unlink $L_i=\pd \D_i$; hence, Proposition \ref{prop:trivialdisks} implies the following fact.

\begin{lemma}\label{spine}
A bridge trisection $\T$ is uniquely determined by its spine $\Ss$.
\end{lemma}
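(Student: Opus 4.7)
The plan is to split the reconstruction of $\T$ from $\Ss$ into two independent steps: first, recover the three 4-balls $X_i$ from the spine viewed as a subset of $S^4$; second, recover each trivial disk system $\D_i$ sitting inside its corresponding 4-ball.

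For the first step, I would use the trisection axioms, which force $\Int(X_i)\cap X_j=\emptyset$ whenever $i\neq j$, because $X_i\cap X_j=B_{ij}\subset\pd X_i$. Consequently $S^4\setminus\Ss$ is the disjoint union of the three open sets $\Int(X_i)$, so each $X_i$ is recovered from $\Ss\subset S^4$ simply as the closure of one of the three components of $S^4\setminus\Ss$. This already pins down the ambient decomposition $S^4=X_1\cup X_2\cup X_3$ purely from the embedded spine.

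For the second step, observe that $\pd\D_i=L_i=\A_{ij}\cup_{\mathbf{p}}\A_{ki}$ is entirely encoded in the spine, since it sits inside $B_{ij}\cup B_{ki}=\pd X_i$. I would then apply Proposition~\ref{prop:trivialdisks}: any two collections of trivial disks in the 4-ball $X_i$ whose boundaries agree with $L_i$ are isotopic rel boundary. Hence each $\D_i\subset X_i$ is determined, up to isotopy rel boundary, by $L_i$ and thus by $\Ss$. Combined with the first step, this reconstructs the entire trisection from its spine, up to the equivalence defined earlier in the section. The only mildly delicate point is verifying that the components of $S^4\setminus\Ss$ are precisely the interiors of the $X_i$; this is immediate from the trisection axioms, and after that Proposition~\ref{prop:trivialdisks} does essentially all the work, so I do not anticipate a serious obstacle.
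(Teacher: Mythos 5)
Your proposal is correct and follows the same route as the paper: the spine determines each unlink $L_i=\pd\D_i=\A_{ij}\cup\A_{ki}\subset\pd X_i$, and Proposition~\ref{prop:trivialdisks} then determines each trivial disk system $\D_i$ up to isotopy rel boundary. The preliminary step of recovering the $X_i$ is harmless but not needed, since the underlying genus-zero trisection of $S^4$ is fixed by definition.
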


Next, we discuss connected and boundary-connected summation.  For a pair $M_1$ and $M_2$ of $n$-manifolds, the \emph{connected sum} $M_1 \# M_2$ is constructed by removing a neighborhoods of $\nu(p_l)$ of a point $p_l \in M_l$ and identifying the boundary of $M_1 \setminus \nu(p_1)$ with the boundary of $M_2 \setminus \nu(p_2)$.  If $M_1$ and $M_2$ have nonempty boundary, we may form the \emph{boundary-connected sum} $M_1 \natural M_2$ by a similar construction using points $q_l \subset \pd M_l$. 

Given two knotted surfaces $\K_1$ and $\K_2$ in $S^4$, we form the \emph{connected sum} $\K_1 \# \K_2$ by choosing points $p_1 \in \K_1$ and $p_2 \in \K_2$, removing a small neighborhoods of $p_1$ and $p_2$, and gluing $(S^4 \setminus \nu(p_1),\K_1 \setminus \nu(p_1))$ to $(S^4 \setminus \nu(p_2),\K_2 \setminus \nu(p_2))$ along their boundaries.  This operation is independent of the choices of $p_1$ and $p_2$ provided that $\K_1$ and $\K_2$ are connected. 

If $\K$ can be expressed as $\K_1 \# \K_2$, then there is a smoothly embedded 3--sphere which cuts $S^4$ into two 4--balls and meets $\K$ in a single unknotted curve.  We call such an $S^3$ a \emph{decomposing sphere}.  The pairs $(S^4, \K_1)$ and $(S^4, \K_2)$ can be recovered by cutting along the decomposing sphere, and capping off the resulting manifold pairs with copies of the standard trivial 1--disk system $(B^4, D^2)$.  We say that a decomposing sphere is \emph{nontrivial} if neither $\K_1$ nor $\K_2$ is an unknotted surface in $S^4$. 

Now, suppose that for $l=1,2$ the surface $\K_l$ is equipped with a $b_l$--bridge trisection $\T_l$ given by
$$(S^4,\K_l) = (X^l_1,\D^l_1) \cup (X^l_2,\D^l_2) \cup (X^l_3,\D^l_3),$$
with $(B^l_{ij},\A^l_{ij}) = (X^l_i,\D^l_i) \cap (X^l_j,\D^l_j)$ and trisection sphere $\Sigma_l$.  To construct the connected sum of $\K_1$ and $\K_2$, we may choose points  $p_l \in \K_l$.  As such, each point $p_l$ has a standard trisected regular neighborhood $\nu(p_l)$, so that $\pd(\Sigma_1\setminus \nu(p_1))$ and $\pd(\Sigma_2\setminus \nu(p_2))$ are identified, as are $\pd(B^1_{ij}\setminus \nu(p_1))$ and $\pd(B_{ij}^2\setminus \nu(p_2))$ for each pair of indices. 
 
We leave it as an exercise for the reader to verify that the result is a $(b_1 + b_2 - 1)$--bridge trisection, which we denote $\T_1\#\T_2$.  This new bridge trisection is given by the following decomposition of $\K=\K_1\#\K_2$:
$$(S^4,\K) = (X_1,\D_1) \cup (X_2,\D_2) \cup (X_3,\D_3),$$
where
\be
\item $(X_i,\D_i)=(X^1_i,\D^1_i)\natural(X^2_i,\D^2_i)$, and
\item $(B_{ij},\A_{ij}) = (X_i,\D_i) \cap (X_j,\D_j)=(B^1_{ij},\A^1_{ij})\natural(B^2_{ij},\A^2_{ij})$.
\ee

Notice that the result $\K=\K_1\#\K_2$ of the connected summation does not depend on the choices of points $p_1$ and $p_2$ up to the connected components of $\K_1$ and $\K_2$ containing each point, but the resulting bridge trisection $\T=\T_1\#T_2$ often will.

\subsection{Tri-plane diagrams}\label{subsec:introdiags}\ 

We may further reduce the information needed to generate any bridge trisection by projecting the arcs $\A_{ij}$ onto an embedded 2--complex.  Consider a $b$--bridge trisection of a knotted surface $\K$ labeled as above, and for each pair of indices let $E_{ij} \subset B_{ij}$ be an embedded disk with the property that $e = \pd E_{12} = \pd E_{23} = \pd E_{31}$.  We call the union $E_{12} \cup E_{23} \cup E_{31}$ a \emph{tri-plane}.

Suppose the points $\mathbf p = \K \cap \Sigma$ lie in the curve $e = E_{12} \cap E_{23} \cap E_{31}$.  We assign each $E_{ij}$ a normal vector field in $B_{ij}$ such that all three vector fields induce a consistent orientation on their common boundary curve $e$.  The knotted surface $\K$ intersects each 3--ball $B_{ij}$ in a $b$--strand trivial tangle $\A_{ij}$, and this triple of tangles can be projected onto the tri-plane to yield an immersed collection of arcs; that is, a 4--valent graph with boundary in $e$.  By viewing each projection from the perspective of the normal vector field, we can assign crossing information at each double point of our projection, and we obtain a triple of planar tangle diagrams $\Pau= (\Pau_{12}, \Pau_{23},\Pau_{31})$ with the property that for $\{i,j,k\} = \{1,2,3\}$, if $\overline{\Pau_{ki}}$ is the mirror image of $\Pau_{ki}$, then $\Pau_{ij} \cup \overline{\Pau_{ki}}$ is a classical link diagram for the unlink $\pd \D_i$ of $c_i$ components in the plane $E_{ij} \cup E_{ki}$. 

We call any triple $\Pau = (\Pau_{12},\Pau_{23},\Pau_{31})$ of planar diagrams for $b$--strand trivial tangles having the property that $\Pau_{ij} \cup \overline{\Pau_{ki}}$ is a diagram for an $c_i$--component unlink a \emph{$(b;c_1,c_2,c_3)$--bridge tri-plane diagram}.  Given a tri-plane diagram $\Pau$, we can build a smoothly embedded surface $\K(\Pau)$ in $S^4$ as follows:  The triple of diagrams $(\Pau_{12}, \Pau_{23},\Pau_{31})$ uniquely describes three trivial tangles $(B_{ij},\A_{ij})$ as well as a pairwise gluing of these tangles along their common boundary. Each union $(B_{ij},\A_{ij})\cup (B_{ki},\A_{ki})$ is an unlink $L_i$ in $S^3$, and by Proposition \ref{prop:trivialdisks}, we can cap off $L_i$ uniqely with a trivial disk system $(X_i,\D_i)$.  The result is an embedded surface $\K(\Pau)$ in $S^4$ that is naturally trisected:
$$(S^4, \K(\Pau))=(X_1,\D_1)\cup(X_2,\D_2)\cup(X_3,\D_3).$$
In short, the tri-plane diagram $\Pau$ determines the spine $\Ss = (B_{12},\A_{12}) \cup (B_{23},\A_{23}) \cup (B_{31},\A_{31})$ of the bridge trisected surface $\K(\Pau)$.

\begin{remark}
Technically, the union in the preceding paragraph should be written $(B_{ij},\A_{ij})\cup \overline{(B_{ki},\A_{ki})}$.  More precisely, we might suppose $(B_{i,j},\A_{i,j})$ inherits its orientation as a component of $\pd X_i$.  Thus, the orientation of $(B_{ki},\A_{ki})$ in $\pd X_i$ is opposite that which it inherits from $\pd X_k$.  In practice, however, this mirroring is only evident when we are working with the tri-planes diagrams $\Pau$; hence, we will suppress the mirror image notation except when discussing these diagrams.
\end{remark}

\subsection{Two simple examples}\label{subsec:introexs}\ 

To guide the intuition of the reader, we present two depictions of low-complexity trisections of unknotted 2--spheres in $S^4$.  For our illustrations, we consider $S^4$ as the unit sphere $\{(x_1,\dots,x_5): x_1^2 + \dots + x_5^2 = 1\}$ in $\R^5$.  Let $Y = \{(x_1,\dots,x_5) \in S^4: x_5=0\}$, so that $Y \cong S^3$, and let $\pi\colon\R^5 \rightarrow \R^2$ denote projection to the $x_1x_2$--plane.  The 0--trisection of $S^4$ is simply a lift of the obvious trisection of the unit disk $D = \pi(S^4)$ pictured below.

\begin{figure}[h!]
  \centering
    \includegraphics[width=.25\textwidth]{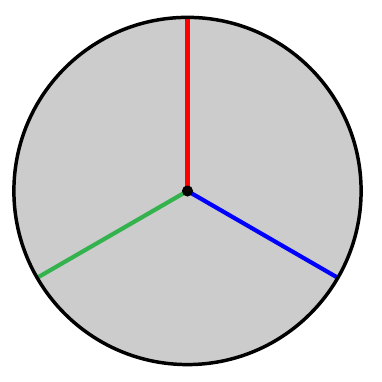}
    \caption{The standard trisection of the unit disk, which lifts to the standard trisection of $S^4$.}
    \label{disktrisection}
\end{figure}

In addition, if $B_{12} \cup B_{23} \cup B_{31}$ is a spine for this standard trisection, then the intersection of this spine with $Y$ is the union of three disks $E_{12} \cup E_{23} \cup E_{31}$.  Now, if $\K \subset S^4$ is an unknotted 2--sphere, then $\Ss$ is isotopic into $Y$, so that $\Ss=\K \cap B_{ij} = \K \cap E_{ij}$.  As such, we may construct a trisection of $\K$ by putting it into a nice position relative to the tri-plane in $Y$.

In Figures \ref{fig:1Bridge} and \ref{fig:2Bridge} below, we depict this situation in $\R^3$ by removing a point in $e = E_{12} \cap E_{23} \cap E_{31}$.  Figures \ref{fig:1Bridge} and \ref{fig:2Bridge} show one-bridge and two-bridge trisections (respectively) of an unknotted 2-sphere along with the associated tri-plane diagrams.

\begin{figure}[h!]
  \centering
    \includegraphics[width=.9\textwidth]{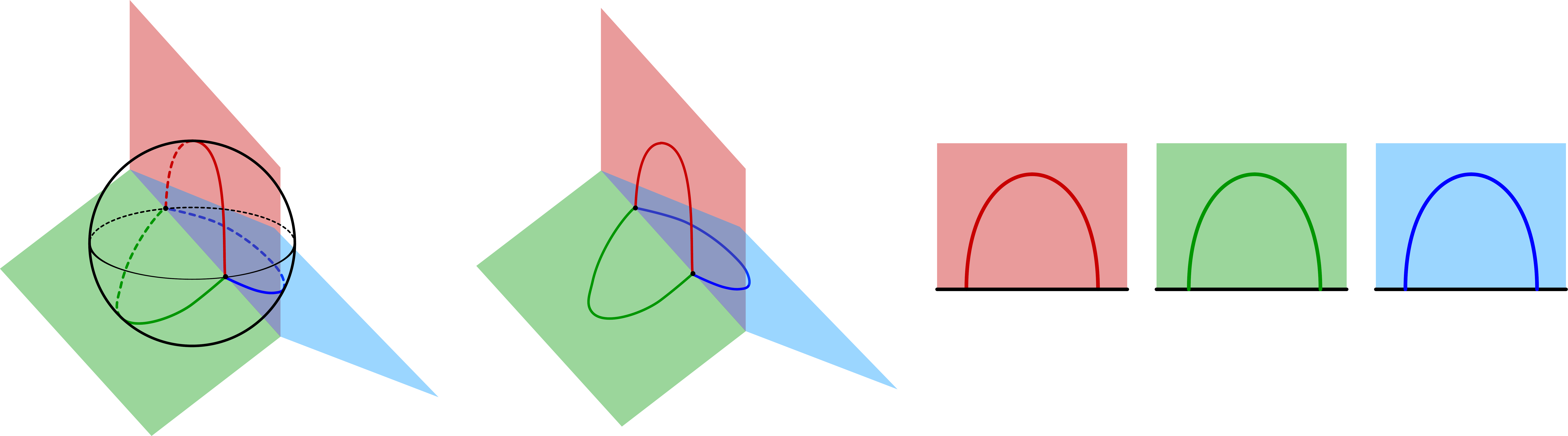}
    \caption{A 1--bridge trisection of an unknotted 2--sphere, depicted with the tri-plane in 3--space, along with the corresponding tri-plane diagram.}
    \label{fig:1Bridge}
\end{figure}

\begin{figure}[h!]
  \centering
    \includegraphics[width=.9\textwidth]{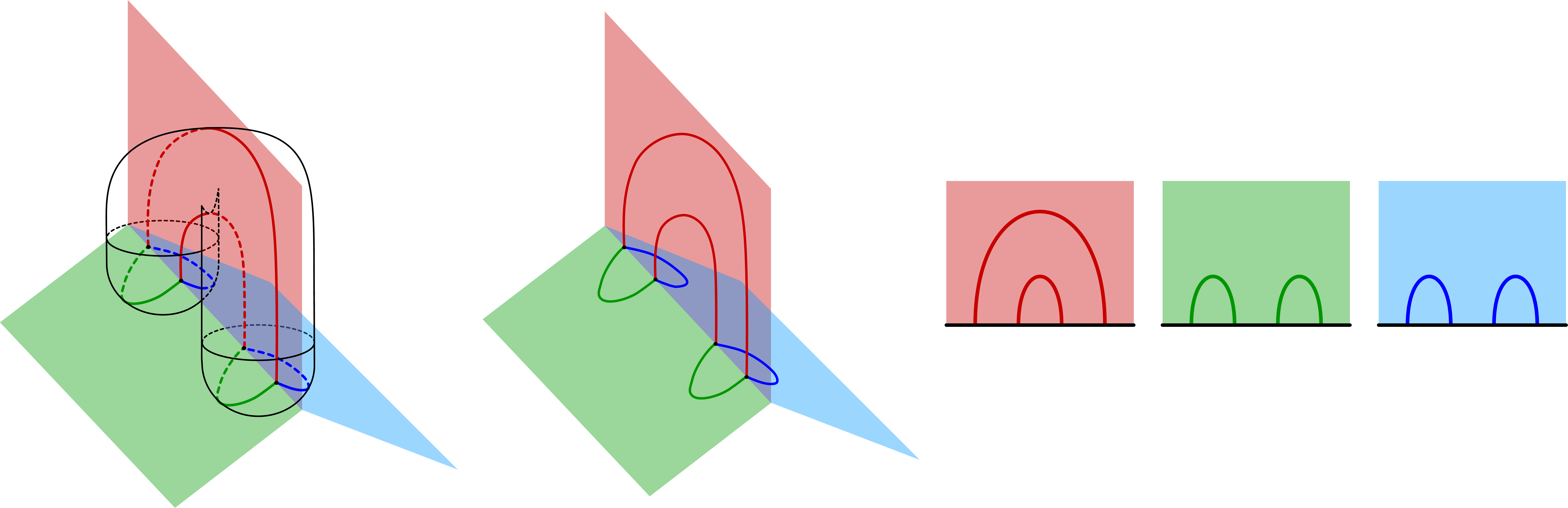}
    \caption{A 2--bridge trisection of an unknotted 2--sphere, depicted with the tri-plane in 3--space, along with the corresponding tri-plane diagram.}
    \label{fig:2Bridge}
\end{figure}

\subsection{Tri-plane moves}\label{subsec:intromoves}\ 

At the end of Section \ref{sec:unique}, we prove Theorem \ref{thm:tri-planemoves}, which asserts that any two tri-plane diagrams for a fixed knotted surface $\K$ in $S^4$ are related by a finite sequence of tri-plane moves.  There are three types of moves: interior Reidemeister moves,  mutual braid transpositions, and stabilization/destabilization.  We briefly describe these moves here, but we go into more detail regarding stabilization and destabilization in Section \ref{sec:stab}.

Given a tri-plane diagram $\Pau$, an \emph{interior Reidemeister move} is simply the process of performing a Reidemeister move within the interior of one of the $\Pau_{ij}$.  These moves correspond to isotopies of the corresponding knotted surface that are supported away from the bridge sphere.

A \emph{mutual braid transposition} is a braid move performed on a pair of adjacent strands contained in all three diagrams $\Pau_{12}$, $\Pau_{23}$, and $\Pau_{31}$.  This move corresponds to an isotopy that are supported in a neighborhood of two adjacent intersections of $\K$ with $\Sigma$ along the curve $e$.  See Figure \ref{fig:BridgeSphereBraiding} for an example.

\begin{figure}[h!]
\centering
\includegraphics[scale = .35]{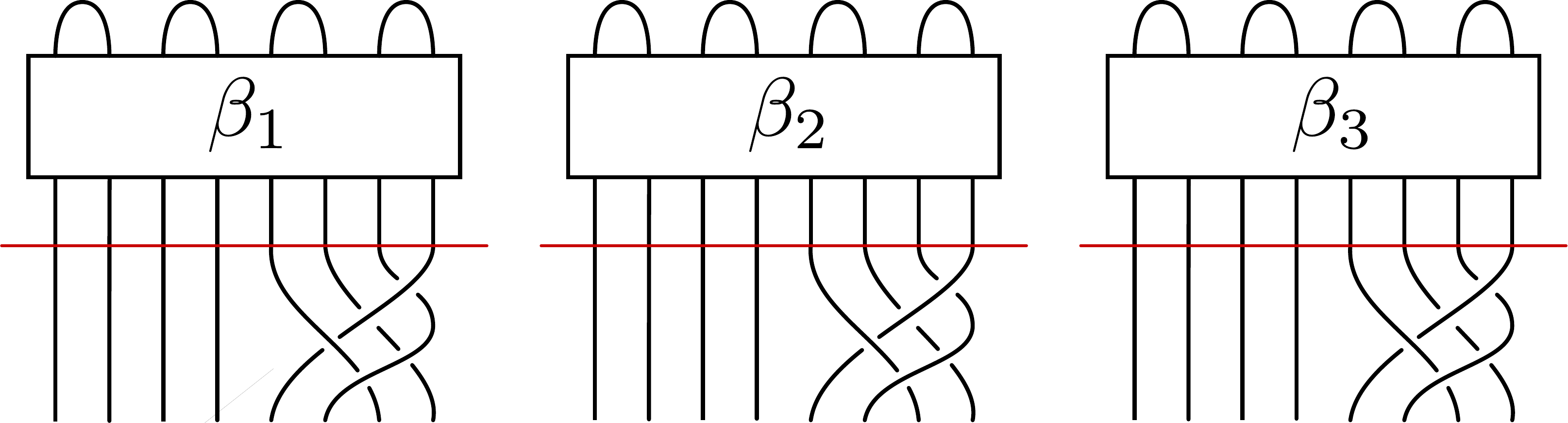}
\caption{The product of five mutual braid transpositions}
\label{fig:BridgeSphereBraiding}
\end{figure}

Figure \ref{fig:4Stab} shows an example of a stabilization move and its inverse, a destabilization move.  These moves are the most complicated, and so we postpone their discussion until Section \ref{sec:stab}.  Note that a stabilization move turns a $b$--bridge trisection into a $(b+1)$--bridge trisection.

\begin{figure}[h!]
\centering
\includegraphics[width=.95\textwidth]{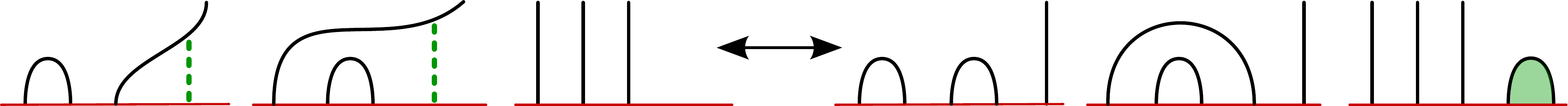}
\caption{A simple case of one of the stabilization and destabilization operations.}
\label{fig:4Stab}
\end{figure}

We will show in Section \ref{sec:stab} that any two tri-plane diagrams $\Pau$ and $\Pau'$ corresponding to the \emph{same} bridge trisection $\T$ are related by a sequence of interior Reidemeiester moves and mutual braid transpositions.  More generally, any two tri-plane diagrams $\Pau$ and $\Pau'$ yielding potentially different trisections of a knotted surface $\K$ in $S^4$ are related by a sequence of all three moves.

\subsection{Branched double covers of bridge trisections}\label{subsec:branched}\ 

We conclude this section by relating bridge trisections to trisections (of 4--manifolds) via the branched double cover construction.  First, we recall the definition of a trisection.

\begin{definition}
	Let $X$ be a closed, connected, orientable, smooth 4--manifold. A \emph{$(g;k_1,k_2,k_3)$--trisection of $X$} is a decomposition
	$$X=X_1\cup X_2\cup X_3$$
	such that
	\be
		\item $X_i\cong\natural^{k_1}(S^1\times B^3)$,
		\item $H_{ij} = X_i\cap X_j$ is a genus $g$ handlebody, and
		\item $\Sigma=X_1\cap X_2\cap X_3$ is a closed surface of genus $g$.
	\ee
	The union $H_{12}\cup H_{23}\cup H_{31}$ is called the \emph{spine} of the trisection.
\end{definition}

Note that the trisection (and hence the 4--manifold) is determined uniquely by its spine (by \cite{laudenbach-poenaru}), which can be encoded as a Heegaard triple $(\Sigma,\A,\n,\g)$, where $\A$, $\n$, and $\g$ are $g$--tuples of simple closed curves on $\Sigma$ describing cut systems for the handlebodies $H_{12}$, $H_{23}$ and $H_{31}$, respectively.

\begin{figure}[h!]
\centering
\includegraphics[scale=.4]{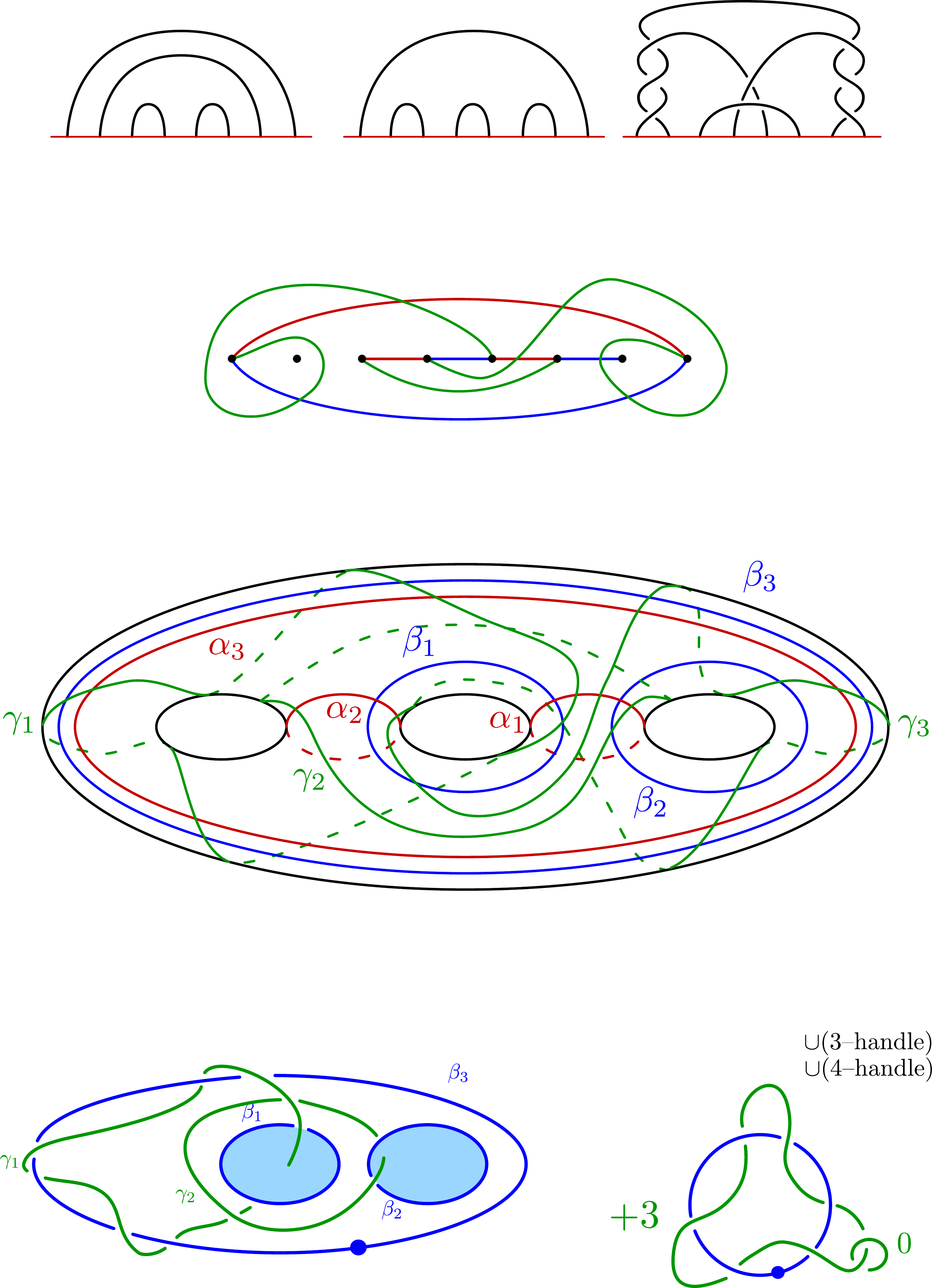}
\put(-189,440){(a)}
\put(-189,327){(b)}
\put(-189,142){(c)}
\put(-270,-20){(d)}
\put(-73,-20){(e)}
\caption{(a) A tri-plane diagram for the spun trefoil. (b) A choice of three bridge disks for each tangle, described via their intersection with the bridge sphere. (c) A trisection diagram for the branched double cover of the spun trefoil. (d) Two $\gamma$--curves that are dual to disks in $H_\beta$. (e) The resulting Kirby diagram.}
\label{fig:SpunCover}
\end{figure}

The manner in which a trisection of $X$ gives rise to a handle decomposition of $X$ is described in detail in \cite{gay-kirby:trisections}.  In brief, there is a handle decomposition of $X$ such that $X_1$ contains one 0--handle and $k_1$ 1--handles, $X_2$ contains $g-k_2$ 2--handles, and $X_3$ contains $k_3$ 3--handles and one 4--handle.  Moreover, we may obtain a Kirby diagram from this decomposition:  The $k_1$ dotted loops come from $k_1$ pairwise disjoint curves in $\Sigma$ bounding disks in both $H_{12}$ and $H_{31}$, and the attaching curves for the 2--handles come from $g - k_2$ pairwise disjoint surface-framed curves in $\Sigma$ that bound disks in $H_{23}$ and are primitive in $H_{12}$.  See below for an example.

Now, let $\K$ be a knotted surface in $S^4$, and let $X(\K)$ denote the double cover of $S^4$ branched along $\K$.  Any $(b;c_1,c_2,c_3)$--bridge trisection $\T$ of $\K$ induces a trisection $\widehat\T$ for $X(\K)$:  This follows almost immediately from the fact that the double cover of a $n$--ball branched along a collection of $c$ trivially embedded $(n-2)$--disks is a $n$--dimensional 1--handlebody of genus $c-1$.  In particular, the branched double cover of a trivial $c$--disk system is $\natural^{c-1}(S^1\times B^3)$, and the branched double cover of a trivial $b$--strand tangle is $\natural^{b-1}(S^1\times D^2)$.

Thus, since the $(b;c_1,c_2,c_3)$--bridge trisection $\T$ is determined by its spine, a triple of trivial $b$--strand tangles, we need only consider the branched double cover of this spine, which will be a triple of 3--dimensional handlebodies of genus $b-1$ .  This triple is enough to determine the trisection $\widehat \T$, but we can also see that the each trivial $c_i$--disk system lifts to a copy of $\natural^{c_i-1}(S^1\times B^3)$.  It follows that $\widehat \T$ is a $(b-1;c_1-1,c_2-1,c_3-1)$--trisection of $X(\K)$.

Moreover, a choice of bridge disks for each trivial tangle in $\T$ gives rise to a choice of compressing disks for the corresponding handlebody in $\widehat\T$. Note that only $b-1$ bridge disks are required in each tangle, the last one being redundant in the handlebody description.

As an example, let $\K$ denote the spun trefoil.  (See Section \ref{sec:exs} for a definition.) Figure \ref{fig:SpunCover} shows how to produce a simple Kirby diagram for $X(\K)$.  As an exercise, the reader can check that the tri-plane diagram in Figure \ref{fig:SpunCover}(a) can be obtained from the tri-plane diagram in Figure \ref{fig:SpunTrefoilDiag} by a sequence of tri-plane moves.  Since each tangle is 4--stranded, a choice of three bridge disks will determine the trisection of $X(\K)$.  The shadows of these bridge disks in the bridge sphere are shown in Figure \ref{fig:SpunCover}(b), and these arcs lift to curves on a genus three surface specifying three handlebodies.  The corresponding trisection diagram is shown in Figure \ref{fig:SpunCover}(c). To recover a Kirby diagram -- see \cite{gay-kirby:trisections} for details -- we push the $\gamma$--curves into the $\beta$--handlebody and notice that $\gamma_1$ and $\gamma_2$ are dual to the disks bounded by $\beta_1$ and $\beta_2$, respectively. This allows us to think of the link $L=\gamma_1\cup\gamma_2$ as the attaching circles for a pair of 2--handles, which are surface-framed by $\Sigma$. The identification $\alpha_3=\beta_3$ gives rise to a 1--handle, and the end result is a description of our manifold as surgery on the 2--component link $L$ in $S^1\times S^2$.  See Figure \ref{fig:SpunCover}(d). The resulting Kirby diagram is shown in Figure \ref{fig:SpunCover}(e).

\section{Existence of bridge trisections}\label{sec:exist}

In this section, we use hyperbolic splittings and banded link presentations (defined below) of knotted surfaces to prove the existence of bridge trisections.  We introduce a special type of banded link presentation, called a banded bridge splitting, which we show to be equivalent to a bridge trisection.  We will prove that every knotted surface admits a bridge trisection by showing that it admits a banded link presentation with a banded bridge splitting.  We begin with several definitions.

Let $L$ be a link in $S^3$.  A \emph{band} $\ups$ for $L$ is an embedding of the unit square $I\times I$ in $S^3$ such that $\ups\cap L=\{-1,1\}\times I$.  Let $L_\ups=(L\setminus \{-1,1\}\times I)\cup(I\times\{-1,1\})$.  Then $L_\ups$ is a new link and is said to result from \emph{resolving} the band $\ups$.  We often let $\ups$ denote a collection of pairwise disjoint bands and let $L_{\ups}$ denote the result of resolving all bands in $\ups$.

Note that a band $\ups$ is determined by its \emph{core}, the arc $y=I\times\{0\}$, and its \emph{framing}, a normal vector field along $y$ that is tangent to $\nu$. If $\Sigma$ is an embedded surface in $S^3$ with $y\subset\Sigma$, we say that $y$ is \emph{surface-framed} by $\Sigma$ if the framing of $y$ is either normal to $\Sigma$ at every point of $y$ or tangent to $\Sigma$ at every point of $y$.  Note that this can happen in two distinct ways: If $\ups$ is induced by a surface-framed arc in $\Sigma$ and $L$ meets $\Sigma$ transversely, then the band $\ups$ also meets $\Sigma$ transversely. On the other hand, if $L$ is contained in $\Sigma$ near the endpoints of $y$, then $\ups$ will be contained in $\Sigma$.

We say a Morse function $h\colon S^4\to\R$ is \emph{standard} if $h$ has precisely two critical points, one of index zero and one of index four.  For a compact submanifold $X$ of $S^4$ (of any dimension), let $X_{[t,s]} = h^{-1}\left([t,s]\right) \cap X$ and let $X_t = h^{-1}(t) \cap X$.  In particular, $S^4_{[t,s]} = h^{-1}([t,s]) \cong S^3\times[t,s]$.  Similarly, for any compact subset $Y\subset S^4_t$ with $t\in[s,r]$, we let $Y[s,r]$ denote the vertical cylinder $Y\times[s,r]$ obtained by pushing $Y$ along the flow of $h$ during time $[s,r]$.  We extend these definitions in the obvious way to any interval or point in $\R$.  

Now, we recall that for every knotted surface $\K$, there exists a Morse function $h\colon (S^4,\K) \rightarrow \R$ such that
\be
\item The function $h_{S^4}$ is standard.
\item Every minimum of $h_\K$ occurs in the level $h^{-1}(-1)$.
\item Every saddle of $h_\K$ occurs in the level $h^{-1}(0)$.
\item Every maximum of $h_\K$ occurs in the level $h^{-1}(1)$.
\ee
Following \cite{lomonaco:homotopy}, we call such a Morse function $h$ a \emph{hyperbolic splitting} of $(S^4,\K)$. 

In this case, each of $\K_{\pm \eps}$ is an unlink in the 3--sphere $S^4_{\pm\eps}$.  In addition, if $h_\K$ has $n$ saddle points, there are $n$ framed arcs $y = \{y_1,\dots,y_n\}$ (which can be chosen to be disjoint) such that attaching the corresponding bands $\ups$ to $\K_{-\eps}$ yields $\K_{\eps}$.  In this case, we may push the bands into $S^4_0$, after which $\K_0=\K_{-\eps}\cup\ups$.  To simplify notation, we will usually write $L = \K_{-\eps}$, so that $\K_{\eps} = L_{\ups}$.  We call $(L,\ups)$ a \emph{banded link}, noting that our definition requires that both $L$ and $L_{\ups}$ are unlinks.  Observe that every hyperbolic splitting yields a banded link. 

Conversely, if $(L,\ups)$ is a banded link, then we may construct a knotted surface $\K=\K(L,\ups)$, called the \emph{realizing surface} as follows:
\be
\item $\K_{(-\eps,0)} = L(-\eps,0)$,
\item $\K_0 = L\cup\ups$, 
\item $\K_{(0,\eps)} = L_\ups(0,\eps)$,
\item $\K_{-\eps}$ and $\K_{\eps}$ are collections of disks that cap off $\K$ along $L$ and $L_\ups$, respectively.
\ee
Note that the disks capping of $L$ and $L_{\ups}$ are unique up to isotopy in $B^4$ by Proposition~\ref{prop:trivialdisks}.  If follows that if a hyperbolic splitting $h$ of $(S^4,\K)$ induces a banded link $(L,\ups)$, then $\K = \K(L,\ups)$. 

Next, we introduce a decomposition of a banded link $(L,\ups)$, which gives rise to a canonical bridge trisection of $\K(L,\ups)$.  A \emph{banded $b$--bridge splitting} $\BB$ of a banded link $(L,\ups)$ is a decomposition
\[ (S^3,L,\ups) = (B_{12},\A_{12}) \cup_{\Sigma,y^*} (B_{31},\A_{31}),\]
where
\be
\item $(S^3,L) = (B_{12},\A_{12}) \cup_{\Sigma} (B_{31},\A_{31})$ is a $b$--bridge splitting of $L$,
\item the bands $\ups$ are described by the surface-framed arcs $y^*\subset\Sigma$,
\item there is a collection $\A^*_{12}$ of shadows for the arcs in $\A_{12}$ such that $\A_{12}^* \cup y^*$ is a collection of embedded, pairwise disjoint arcs in $\Sigma$.
\ee
The collection of shadow arcs in condition (3) is said to be \emph{dual} to $y^*$.  We will usually let $c_1$ denote the number of components of $L$, let $c_2=b - |y^*|$, and let $c_3$ denote the number of components of $L_{\ups}$.  In the case that $c_1 = c_2 = c_3$, we say that the banded bridge splitting $\BB$ is \emph{balanced}. 

Given a banded bridge splitting $\BB$ with components labeled as above, we will describe a process which builds a bridge trisection $\T(\BB)$.  For the first step, consider $S^3=B_{12}\cup B_{31}$ as the equator of $S^4_0$ of $S^4$ and let $X = B_{12}[-\eps,\eps]$.  We may push the bands $\ups$ along $\A_{12}[0]$ into the interior of $B_{12}[0]$ and define a subspace $\D$ of $X$ by
\be
\item $\D_{[-\eps,0)} = \A_{12}[-\eps,0)$,
\item $\D_{0}= \A_{12}[0] \cup \ups[0]$,
\item $\D_{(0,\eps]} = (\A_{12})_\ups(0,\eps]$,
\ee
where $(\A_{12})_\ups$ denotes the result of banding the strands of $\A_{12}$ along $\ups$.  Note that $(\A_{12})_{\ups}[0] \subset \D_0$.  We also observe that $\D$ may be considered to be $\K(L,\ups) \cap X$ using our definition of the realizing surface $\K(L,\ups)$.  In the next lemma, we examine $(X,\D)$ and its subspaces more closely.

\begin{lemma}\label{lemma:trivialarcs}
The arcs $(\A_{12})_{\ups}$ are trivial in $B_{12}$, and $\D$ is a trivial $c_2$--disk system.
\begin{proof}
Consider a band $\ups_i\in\ups$, and recall that $\ups_i=I\times I$, with $\{-1,1\}\times I\subset \A_{12}$.  Isotope $\ups_i$ into $B_{12}$ so that a single arc of $\pd \ups_i$ is contained in $\Sigma$, label this arc $\ups_i^+$, and give the other arc of $I\times\{-1,1\}$ the label $\ups_i^-$.  Extending this convention to the collection $\ups$ of bands gives two collections $\ups^+$ and $\ups^-$ of associated boundary arcs. 

After pushing $\ups$ into the interior of $B_{12}$, let $\Delta$ be a set of bridge disks for $\A_{12}$ yielding the shadows $\A_{12}^*$ dual to the framed arcs $y^* \subset \Sigma$, and let $C$ be a connected component of $\A_{12} \cup \upsilon$.  Since no component of $\A_{12}^* \cup y^*$ is a simple closed curve, it follows that if $C$ contains $b_C$ arcs of $\A_{12}$, then $C$ must contain a collection $\upsilon_C \subset \upsilon$ of $b_C - 1$ bands (possibly $b_C-1 = 0$) of $\upsilon$, and each band separates $C$, so that attaching $\upsilon_C$ yields $b_C$ arcs of $(\A_{12})_{\ups}$.  

Recall that the arcs $y^*_C$ associated to $\upsilon_C$ have the surface framing, so that there is an isotopy of $\upsilon_C$ in $B_{12}$ which pushes $\upsilon_C^+$ onto $y^*_C \subset \Sigma$.  By way of this isotopy, we see that $b_C - 1$ of the $b_C$ arcs of $(\A_{12})_{\ups}$ are trivial; the bridge disks are given by the trace of the isotopy.  Let $\Delta_C \subset \Delta$ denote the $b_C$ dual bridge disks corresponding to the arcs of $\A_{12}$ in $C$.  Assuming $\upsilon_C$ has been isotoped so that $\upsilon_C^+ \subset \Sigma$, we have a slight push off of $\Delta_C \cup \upsilon_C$ is a bridge disk for the remaining arc in $(\A_{12})_{\ups}$ arising from attaching $\upsilon_C$ to $\A_{12}$.  We conclude that all arcs of $(\A_{12})_{\ups}$ are trivial.  See Figure \ref{banddisks}. 

\begin{figure}[h!]
  \centering
    \includegraphics[width=.8\textwidth]{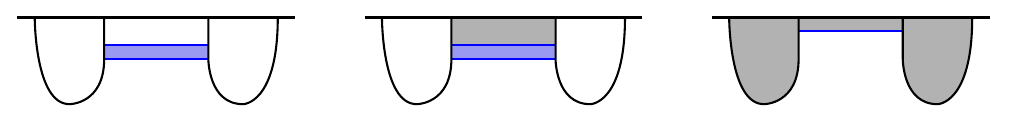}
    \caption{At left, a band connecting two bridge disks.  At middle, a bridge disk for one of the arcs resulting from attaching the band.  At right, a bridge disk for the other resulting arc after canceling the first resulting arc. }
    \label{banddisks}
\end{figure}

For the second part of the proof, we first note that $\D$ is isotopic to $\D'$ given by
\be
\item $\D'_{[-\eps,\eps)} = \A_{12}[-\eps,\eps)$,
\item $\D'_{\eps}= \A_{12}[\eps] \cup \ups[\eps]$.
\ee
The collection $\D'$ is further isotopic to the set $\D''$ given by
\be
\item $\D''_{-\eps} = \Delta[-\eps]$,
\item $\D''_{(-\eps,\eps)} = \A_{12}^*(\eps,\eps)$,
\item $\D''_{\eps} = \Delta[\eps] \cup \ups[\eps]$.
\ee
Cutting $\D''$ along $\ups[\eps]$ yields a collection of $b$ pairwise disjoint disks, and since no component of $\A_{12}^* \cup y^*$ is a simple closed curve, each band in $\ups[\eps]$ separates $\D''$.  It follows that $\D'' \subset \pd X$ is a collection of $b - |\ups|$ disks, and $(X,\D)$ is a trivial $c_2$--disk system.
\end{proof}
\end{lemma}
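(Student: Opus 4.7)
The plan is to first show that $(\A_{12})_{\ups}$ is a trivial tangle in $B_{12}$, and then use this to show $\D$ is a trivial disk system in $X$. I begin by pushing each band $\ups_i$ along its surface framing so that it lies in a thin collar of $\Sigma$ inside $B_{12}$, with one of its free boundary arcs on $\Sigma$ along $y_i^*$, and by choosing a system $\Delta$ of pairwise disjoint bridge disks for $\A_{12}$ realizing the dual shadows $\A_{12}^*$ (which exist by the definition of a banded bridge splitting).

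For the first claim, the key observation is that $\A_{12}^* \cup y^*$ is a disjoint union of embedded arcs in $\Sigma$, so the associated shadow graph (with edges given by $\A_{12}^*$ and $y^*$ arcs and vertices at shared endpoints) is a disjoint union of trees. A tree component $C$ containing $b_C$ arcs of $\A_{12}^*$ must therefore contain exactly $b_C - 1$ arcs of $y^*$, since all $2 b_C$ bridge points of the $\A_{12}$-arcs in $C$ form the $n_C$ vertices of this tree. I will construct bridge disks for each arc of $(\A_{12})_{\ups}$ arising from $C$ inductively, by gluing appropriate pieces of the bridge disks in $\Delta$ to slight thickenings of the bands in $\upsilon_C$. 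The dual shadow condition keeps the shadows of the resulting bridge disks disjoint subsets of $\Sigma$, so the disks themselves can be arranged to be embedded and pairwise disjoint, proving that $(\A_{12})_{\ups}$ is trivial in $B_{12}$.

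For the second claim, observe that $\D$ is the trace of banding $\A_{12}$ into $(\A_{12})_{\ups}$ inside the $4$--ball $X = B_{12}[-\eps,\eps]$, obtained from $\A_{12} \times [-\eps,\eps]$ by attaching $|\ups|$ two--dimensional $1$--handles at level $0$. An Euler characteristic computation gives $\chi(\D) = b - |\ups| = c_2$, while a connectivity count via the shadow graph shows that $\D$ has exactly $c_2$ connected components (one per tree component of $\A_{12}^* \cup y^*$), each therefore a disk. To establish triviality, I use the bridge disks produced in the first part to isotope $(\A_{12})_{\ups}$ into $\Sigma$; extending this isotopy vertically through $X$ (and simultaneously straightening the cylinders and bands of $\D$) pushes all of $\D$ into $\pd X$, so the components of $\D$ are simultaneously isotopic into $\pd X$. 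Hence $\D$ is a trivial $c_2$--disk system.

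The main obstacle will be carrying out the explicit inductive construction of bridge disks for arcs of $(\A_{12})_{\ups}$ contained in tree components with several bands, since a single arc may traverse multiple subarcs of $\A_{12}$ alternating with band edges, and its bridge disk must be assembled from several pieces of $\Delta$ together with thickened bands. The dual shadow property is precisely what rules out closed loops in the shadow graph, enabling a clean inductive assembly along the tree; verifying smooth embeddedness and mutual disjointness of the finished bridge disks rigorously requires careful case analysis.
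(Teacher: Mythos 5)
Your proposal is correct and follows essentially the same route as the paper: both arguments hinge on the fact that $\A_{12}^* \cup y^*$ is a forest (no closed components), which forces each component with $b_C$ arcs of $\A_{12}$ to carry exactly $b_C-1$ separating bands, and both assemble bridge disks for $(\A_{12})_{\ups}$ from the dual bridge disks $\Delta$ together with the surface-framed bands before flattening $\D$ into $\pd X$. The only cosmetic difference is in the second half, where you count components and compute $\chi(\D)=b-|\ups|=c_2$ to identify the disks, whereas the paper isotopes $\D$ to an explicit collection $\D''\subset\pd X$ and cuts along the bands $\ups[\eps]$; these are interchangeable.
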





\begin{lemma}\label{lemma:morsetri}
A banded bridge splitting $\BB$ for a banded link $(L,\ups)$ gives rise to a bridge trisection $\T(\BB)$ for $\K(L,\ups)$.
\begin{proof}
It suffices to describe the manner in which $\BB$ induces a spine $\Ss(\BB)$ for a bridge trisection of $\K(L,\ups)$.  As above, consider the decomposition $S^3=B_{12}\cup_\Sigma B_{31}$ of the equatorial 3--sphere $S^4_0$. We define the three pieces of our spine as the following subsets of the product neighborhood $S^3[-\eps,\eps]$:
\begin{eqnarray*}
(B_{12}',\A_{12}') &=& (B_{12}[-\eps],\A_{12}[-\eps]) \cup (\pd B_{12}[-\eps,0],\pd \A_{12}[-\eps,0]), \\
(B_{23}',\A_{23}') &=& (B_{12}[\eps],(\A_{12})_{\ups}[\eps]) \cup (\pd B_{12}[0,\eps],\pd \A_{12}[0,\eps]), \\
(B_{31}',\A_{31}') &=& (B_{31}[0],\A_{31}[0]). 
\end{eqnarray*}

By definition, $(B_{12}',\A_{12}')$ and $(B_{31}',\A_{31}')$ are trivial tangles, and by Lemma \ref{lemma:trivialarcs}, $(B_{23}',\A_{23}')$ is also a trivial tangle.
In addition, $(B_{12}',\A_{12}') \cup (B_{31}',\A_{31}')$ is isotopic  to $(S^3[-\eps],L[-\eps])$ and $(B_{31}',\A_{31}') \cup (B_{23}',\A_{23}')$ is isotopic to $(S^3[\eps],L_{\ups}[\eps])$, and so these two unions describe unlinks.
Finally, by Lemma \ref{lemma:trivialarcs}, the union $(B_{12}',\A_{12}') \cup (B_{23}',\A_{23}')$ is also an unlink, namely $(\pd X,\pd \D)$, and thus $\Ss(\BB) = (B_{12}',\A_{12}') \cup (B_{31}',\A_{31}') \cup (B_{23}',\A_{23}')$ is the spine of a bridge trisection of $\T(\BB)$.  By construction, $\T(\BB)$ is a bridge trisection for the knotted surface $\K(L,\ups)$, as desired.  

We note for completeness that the rest of the bridge trisection of $(S^4,\K(L,\ups))$ can be described as follows: $(X_2,\D_2) =(X,\D)$, $(X_1,\D_1)$ and $(X_3,\D_3)$ can be described as $(B_{31}[-\eps,0],\A_{31}[-\eps,0])\cup(S^4_{(-\infty,-\eps]},\K_{-\eps})$ and $(B_{31}[0,\eps],\A_{31}[0,\eps])\cup(S^4_{[\eps,\infty)},\K_\eps)$, respectively. See the schematic in Figure \ref{scheme}. Note that the bridge surface for $\T(\BB)$ may be described as $\Sigma[0]$.

\end{proof}
\end{lemma}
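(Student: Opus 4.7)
The plan is to invoke Lemma \ref{spine}: since a bridge trisection is uniquely determined by its spine, it suffices to construct a spine $\Ss(\BB) = (B_{12}', \A_{12}') \cup (B_{23}', \A_{23}') \cup (B_{31}', \A_{31}')$ directly from the banded bridge splitting and verify the three axioms of a bridge trisection on the nose.

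For the construction, I would fix a standard Morse function $h$ on $S^4$ (so the equator $S^4_0$ is identified with the ambient $S^3 = B_{12} \cup_\Sigma B_{31}$) and place my pieces in a thin product collar $S^3[-\eps, \eps]$. Specifically, slide $(B_{12}, \A_{12})$ down to height $-\eps$ and thicken its boundary vertically up to height $0$ to form $(B_{12}', \A_{12}')$; slide the banded tangle $(B_{12}, (\A_{12})_\ups)$ up to height $+\eps$ and thicken vertically down to height $0$ to form $(B_{23}', \A_{23}')$; and leave $(B_{31}, \A_{31})$ at height $0$ as $(B_{31}', \A_{31}')$. The bands $\ups$ themselves are pushed into the interior of $B_{12}[0]$, consistent with the description of the realizing surface $\K(L,\ups)$. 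By construction, the three 3--balls meet pairwise along the 2--sphere $\Sigma[0]$ in $2b$ points, which will serve as the bridge points.

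Next I would verify the spine axioms. Triviality of the three tangles: $(B_{12}', \A_{12}')$ and $(B_{31}', \A_{31}')$ are (collared) copies of the original trivial tangles of $\BB$, while $(B_{23}', \A_{23}')$ is trivial by the first conclusion of Lemma \ref{lemma:trivialarcs}. For the pairwise unions: $(B_{12}', \A_{12}') \cup (B_{31}', \A_{31}')$ and $(B_{31}', \A_{31}') \cup (B_{23}', \A_{23}')$ are isotopic in $S^3$ to $(S^3, L)$ and $(S^3, L_\ups)$ respectively, both of which are unlinks by the definition of a banded link. The third union $(B_{12}', \A_{12}') \cup (B_{23}', \A_{23}')$ is the boundary pair $(\pd X, \pd \D)$ of the 4--ball $X = B_{12}[-\eps, \eps]$ equipped with the trivial disk system $\D \subset X$ produced by the second conclusion of Lemma \ref{lemma:trivialarcs}; since the disks are trivial, their boundary is an unlink.

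Finally, to confirm that $\T(\BB)$ actually realizes $\K(L,\ups)$, I would name the three 4--dimensional pieces: take $(X_2, \D_2) = (X, \D)$ from Lemma \ref{lemma:trivialarcs}, and obtain $(X_1, \D_1)$ and $(X_3, \D_3)$ by adjoining the product regions $B_{31}[-\eps, 0]$ and $B_{31}[0, \eps]$ to the capping 4--balls $S^4_{(-\infty, -\eps]}$ and $S^4_{[\eps, \infty)}$ of the hyperbolic splitting, whose embedded disks are determined uniquely up to isotopy by Proposition \ref{prop:trivialdisks}. Comparing this to the definition of $\K(L,\ups)$ level by level then shows the surface carried by $\T(\BB)$ is exactly $\K(L,\ups)$. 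The main obstacle throughout is showing that banding along $\ups$ preserves triviality of the tangle $\A_{12}$ in $B_{12}$ --- this is where the surface framing of $y^*$ and its duality to the shadow system $\A_{12}^*$ are both essential --- but Lemma \ref{lemma:trivialarcs} has already handled this, so the remainder of the argument is essentially careful bookkeeping across levels.
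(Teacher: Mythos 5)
Your proposal is correct and follows essentially the same route as the paper: you build the spine from the same three pieces in the collar $S^3[-\eps,\eps]$, verify triviality of $(B_{23}',\A_{23}')$ and unlinkedness of the third pairwise union via Lemma \ref{lemma:trivialarcs}, and identify the three $4$--dimensional pieces with the levels of the realizing surface exactly as the paper does.
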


\begin{figure}[h!]
  \centering
    \includegraphics[width=.6\textwidth]{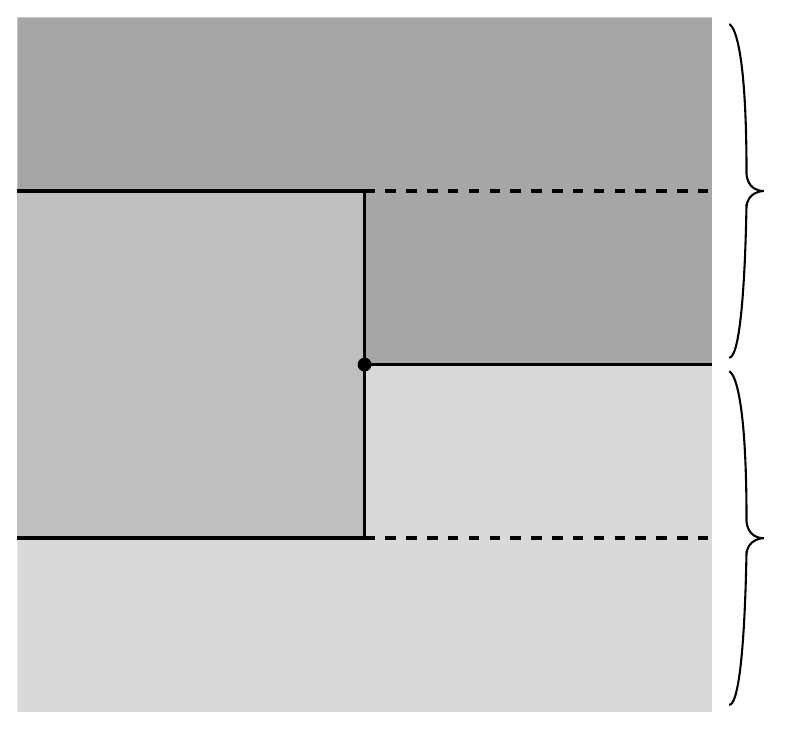}
    \put(-157,114){$\Sigma$}
    \put(-223,114){$(X_2,\D_2)$}
    \put(-10,170){$(X_3,\D_3)$}
    \put(-10,59){$(X_1,\D_1)$}
    \put(-275,59){$-\eps$}
    \put(-267,170){$\eps$}
    \caption{A schematic diagram of the bridge trisection induced by a banded bridge splitting.}
    \label{scheme}
\end{figure}

This process may also be reversed, as we see in the next lemma.

\begin{lemma}\label{lemma:trimorse}
A bridge trisection $\T$ of $(S^4,\K)$ induces a banded link presentation $(L,\ups)$ of $\K$ and a banded bridge splitting $\BB$ of $(S^3,L,\ups)$.
\end{lemma}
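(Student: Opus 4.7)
The plan is to reverse the construction in Lemma \ref{lemma:morsetri}: given the bridge trisection $\T$, produce a hyperbolic splitting of $(S^4, \K)$ adapted to $\T$ and read off from it a banded link $(L, \ups)$ together with a banded bridge splitting $\BB$ realizing $\K$.

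First I would choose a standard Morse function $h\colon S^4 \to [-2, 2]$ whose unique index-$0$ critical point lies in the interior of $X_1$ and whose unique index-$4$ critical point lies in the interior of $X_3$. Using the product structure on the 4-ball $X_2$, I can arrange that a collar neighborhood of $Y_2 = \partial X_2$ in $S^4$ is foliated as $Y_2 \times [-\eps, \eps]$ by the level sets of $h$, with the trisection surface $\Sigma$ sitting as the equator of each fiber 3-sphere, $B_{12}$ as the lower hemisphere and $B_{23}$ as the upper hemisphere. Under this setup $X_1 \subset h^{-1}([-2, -\eps])$ and $X_3 \subset h^{-1}([\eps, 2])$, while $X_2 \cap h^{-1}([-\eps, \eps])$ is a collar of $Y_2$.

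Next I would isotope $\K$ into hyperbolic position with respect to $h$ by exploiting the triviality of the three disk systems. For $(X_1, \D_1)$ and $(X_3, \D_3)$ triviality lets me place every minimum of $h|_\K$ (one per disk of $\D_1$) at level $-1$ and every maximum (one per disk of $\D_3$) at level $+1$. For $(X_2, \D_2)$, a trivial $c_2$-disk system with boundary $L_2 = \A_{12} \cup \A_{23}$, I would push $\D_2$ into the collar $Y_2 \times [-\eps, \eps]$ and, invoking the Morse-theoretic characterization of triviality, arrange that $h|_{\D_2}$ has exactly $b - c_2$ index-$1$ critical points, all at level $0$, each realized as a saddle attaching two sheets of the sweep of $\A_{12}$ along a surface-framed arc $y^*_i \subset \Sigma$. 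The resulting $h|_\K$ is then a hyperbolic splitting of $(S^4, \K)$, yielding a banded link $(L, \ups)$ with $L = \K \cap h^{-1}(-\eps)$ and bands $\ups$ supplied by the saddles.

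Identifying $L$ with $L_1 = \partial \D_1 \subset Y_1$ equips $L$ with the $b$-bridge splitting $(B_{12}, \A_{12}) \cup_\Sigma (B_{31}, \A_{31})$, and the cores $y^*$ of $\ups$ lie in $\Sigma$ with surface-framing by construction. To verify that this data assembles into a banded bridge splitting $\BB$, there remains the combinatorial condition that some collection of shadows $\A_{12}^*$ for the arcs of $\A_{12}$ has $\A_{12}^* \cup y^*$ embedded and pairwise disjoint in $\Sigma$; this I would extract by running the argument of Lemma \ref{lemma:trivialarcs} in reverse, since the saddle decomposition organizes each component of $\D_2$ as an iterated band-sum of bridge disks for $\A_{12}$ along arcs in $\Sigma$, and those bridge disks supply the required shadow system. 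A direct check then confirms that $\K(L, \ups) = \K$ and $\T(\BB) = \T$.

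The main obstacle is the second step: positioning $\D_2$ inside the collar of $Y_2$ so that the saddles of $h|_{\D_2}$ appear as bands with cores in $\Sigma$, with surface-framing, and in a configuration admitting a dual shadow system of $\A_{12}$. This genuinely inverts the band-to-disk reorganization carried out in Lemma \ref{lemma:trivialarcs}, and the triviality hypothesis on $\D_2$ is used in an essential way to guarantee that such a position exists.
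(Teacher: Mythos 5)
Your overall strategy --- reversing Lemma \ref{lemma:morsetri} by building a Morse function adapted to the trisection and reading the bands off the saddles of $\D_2$ --- is the right picture, and your handling of $\D_1$ and $\D_3$ is fine. But the step you flag at the end as ``the main obstacle'' is not a technical detail to be deferred; it \emph{is} the content of the lemma, and your proposal does not actually carry it out. You assert that ``the Morse-theoretic characterization of triviality'' lets you arrange the $b-c_2$ saddles of $h|_{\D_2}$ at level $0$ with surface-framed cores $y^* \subset \Sigma$ admitting a dual shadow system for $\A_{12}$, and that this follows by ``running the argument of Lemma \ref{lemma:trivialarcs} in reverse.'' Lemma \ref{lemma:trivialarcs} is a one-way implication: it takes the dual-shadow condition as a \emph{hypothesis} and concludes triviality of $\D$. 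Reversing it --- showing that every trivial $c_2$--disk system filling the given $b$--bridge splitting of $L_2$ can be put in such a position --- requires an additional input that your argument never supplies.

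That missing input is Proposition \ref{linkbridge}: the bridge splitting $(B_{12},\A_{12}) \cup_{\Sigma} (B_{23},\A_{23})$ of the unlink $L_2$ is a perturbation of the standard one, which is what produces shadow collections $\A_{12}^*$ and $\A_{23}^*$ whose union is a disjoint union of $c_2$ embedded simple closed curves in $\Sigma$. From that configuration the bands are defined directly: take $y^*$ to be all arcs of $\A_{23}^*$ except one per component (after replacing each omitted arc by a pushoff of the complementary arc of its circle, so that $\A_{23}^{**}$ is still a shadow system), and the dual-shadow condition on $\A_{12}^* \cup y^*$ is then immediate. Triviality of $\D_2$ alone does not give you this control --- it tells you $\D_2$ is isotopic into $\partial X_2$, but says nothing about how its saddles sit relative to $\Sigma$; it is the standardness of the bridge splitting of $\partial \D_2$, combined with the uniqueness of trivial disk systems rel boundary (Proposition \ref{prop:trivialdisks}), that lets you replace $\D_2$ by the explicit model built from the banded arcs and then verify $\T(\BB)=\T$. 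Without invoking Proposition \ref{linkbridge} (or an equivalent appeal to Otal's theorem and Theorem \ref{split}), the proposal has a genuine gap at its central step.
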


\begin{proof}
Suppose that $\T$ has spine $\Ss = (B_{12},\A_{12})\cup(B_{23},\A_{23})\cup (B_{31},\A_{31})$.  By Proposition \ref{linkbridge}, the bridge splitting $(B_{12},\A_{12}) \cup_{\Sigma} (B_{23},\A_{23})$ is standard, so we can choose collections of shadows $\A_{12}^*$ and $\A_{23}^*$ for $\A_{12}$ and $\A_{23}$, respectively, so that $\A_{12}^* \cup \A_{23}^*$ is a union of $c_2=|L_2|$ pairwise disjoint simple closed curves in $\Sigma$.  For each component $C_i$ of $\A_{12}^* \cup \A_{23}^*$, fix a single arc $a_i^* \in \A_{23}^*$.  For each $i$, $1\leq i \leq c_2$, let $\gamma_i = \overline{C_i \setminus a_i^*}$.  By a standard argument, if $a_i'$ is a slight pushoff of $\gamma_i$ which shares its endpoints, then $\A_{23}^{**} = \A_{23}^* \setminus \{a_1^* \cup \dots \cup a_{c_2}^*\} \cup \{a_1' \cup \dots \cup a_{c_2}'\}$ is also a set of shadows for $\A_{23}$ with the property that $\A_{12}^* \cup \A_{23}^{**}$ is a collection of pairwise disjoint simple closed curves. 

Let $y^* = \A_{23}^{**} \setminus \{a_1' \cup \dots \cup a_{c_2}'\}$, and let $\ups$ be a set of bands for the unlink $(S^3,L_1) = (B_{12},\A_{12}) \cup (B_{31},\A_{31})$ corresponding to the arcs $y^* \subset \Sigma$ with the surface framing.  We may push $\ups$ into $B_{12}$ and consider the arcs $(\A_{12})_{\ups}$ resulting from their attachment.  By Lemma \ref{lemma:trivialarcs}, there is a collection of shadows for $(\A_{12})_{\ups}$ isotopic to $\A_{23}^{**}$; hence, $(\A_{12})_{\ups}$ is isotopic rel boundary to $\A_{23}$.  It follows that the link $L_{\ups}$ is isotopic to $\A_{23} \cup \A_{31}$ and is also an unlink, so that $(L,\ups)$ is a banded link. 

We claim that
\[ (S^3,L,\ups) = (B_{12},\A_{12}) \cup_{\Sigma,y^*} (B_{31},\A_{31})\]
is a banded bridge splitting, which we call $\BB$, and the realizing surface for $(L,\ups)$ is $\K$.  The first claim follows immediately from the definition of a bridge trisection and from our construction of the arcs $y^*$, since $\A_{12}^* \cup y^* = \gamma_1 \cup \dots \cup \gamma_{c_2}$. 

To prove the second claim, it suffices to show that $\T(\BB) = \T$ using the proof of Lemma \ref{lemma:morsetri}. This also follows from the constructions of $\A_{12}'$ and $y^*$:  If $\Ss' = (B_{12}',\A_{12}') \cup (B_{23}',\A_{23}')\cup (B_{31}',\A_{31}') $ is a spine for $\T(\BB)$, then $(B_{31}',\A_{31}')$ and $(B_{12}',\A_{12}')$ are isotopic in $S^4$ to $(B_{31},\A_{31})$ and $(B_{12},\A_{12})$, respectively, by the proof of Lemma \ref{lemma:morsetri}.  Moreover, a set of shadows for $(B_{23}',\A_{23}')$ is given by the union of the arcs in $y^*$ and pushoffs of the components $\gamma_i$ of $\A_{12}^* \cup y^*$.  But these traces are precisely $y^* \cup \{a_i'\} = \A_{23}^{**}$.  Since two tangles with sets of identical traces must be isotopic rel boundary, we have $(B_{23}',\A_{23}')$ is isotopic to $(B_{23},\A_{23})$; therefore, $\Ss'$ is isotopic to $\Ss$ and $\T$ is equivalent to $\T(\BB)$, as desired.
\end{proof}

Notice that in the proof of Lemma \ref{lemma:trimorse} there are often many pairwise non-isotopic choices for the arcs $y^*$, and thus we see that a bridge trisection may induce many different banded links $(L,\ups)$ and banded bridge splittings $\BB$.  However, if we convert a bridge trisection $\T$ to a banded bridge splitting $\BB$ via Lemma \ref{lemma:trimorse}, then the bridge trisection $\T(\BB)$ given by Lemma \ref{lemma:morsetri} is isotopic to $\T$. 

\begin{remark}\label{rmk:handles}
Lemma \ref{lemma:trimorse} reveals that a $(b;c_1,c_2,c_3)$--bridge trisection $\T$ for $\K$ induces a particular handle decomposition of $\K$:  The lemma produces a banded link presentation $(L,\ups)$, where $L$ is a $c_1$--component unlink, $L_{\ups}$ is a $c_3$--component unlink, and $\ups$ consists of $b - c_2$ bands.  Thus, $\K$ has a handle decomposition with $c_1$  0--handles, $b-c_2$ 1--handles, and $c_3$ 2--handles.  However, notice that the entire construction is symmetric in the $B_{ij}$, so a $(b;c_1,c_2,c_3)$--bridge trisection induces a handle decomposition with $c_i$ 0--handles, $b-c_j$ 1--handles, and $c_k$ 2--handles for any $\{i,j,k\} = \{1,2,3\}$.
\end{remark}

We are ready to prove the main result of this section -- the existence of bridge trisections -- by showing that for every knotted surface $\K$ in $S^4$, there is a banded bridge splitting of a banded link presentation $(L,\ups)$ for $\K$.

\begin{reptheorem}{thm:existence}
Every knotted surface $(S^4,\K)$ admits a $b$--bridge trisection for some $b$.
\end{reptheorem}

\begin{proof}
Choose a banded link presentation $(L,\ups)$ such that $\K = \K(L,\ups)$, and let $g\colon(S^3,L) \rightarrow \R$ be a Morse function such that a level 2--sphere $\Sigma$ is a bridge sphere for $L$.  We will show that there is an isotopy of $g$ resulting in a level bridge sphere for $L$ which is a perturbation of $\Sigma$ and which, when paired with a collection of surface-framed arcs $y^*$ giving rise to $\ups$, yields a banded bridge splitting $\BB$ for $(L,\ups)$.  In an attempt to avoid an abundance of unwieldy notation, we will let $g$ and $\Sigma$ denote the Morse function and level bridge sphere which result from a specified isotopy, despite the fact that they are different than our original $g$ and $\Sigma$.  We also note that any perturbation of $\Sigma$ may be achieved by an isotopy of $g$, and so if we specify such a perturbation, it will be implied that we isotope $g$ accordingly. 

Fix a collection $y$ of framed arcs which give rise to $\ups$.  By following the flow of $g$, we may project $y$ onto immersed arcs $\pi_g(y)$ in the bridge surface $\Sigma$.  We wish to isotope $g$ so that $\pi_g(y)$ is a collection of pairwise disjoint embedded arcs in the bridge surface $\Sigma$.  By perturbing $\Sigma$, we may ensure that arcs in $\pi_g(y)$ have disjoint endpoints.  Moreover, we may remove crossings of $\pi_g(y)$ by perturbing further, as in Figure \ref{levelb}, after which we may assume $\pi_g(y)$ is a collection of embedded arcs in $\Sigma$, and thus there is an isotopy carrying $y$ to $\pi_g(y)$. 

\begin{figure}[h!]
  \centering
    \includegraphics[width=.8\textwidth]{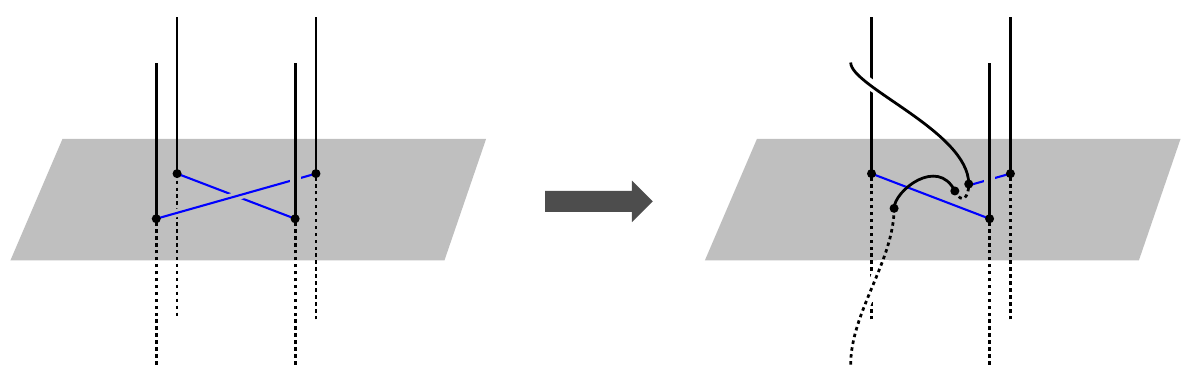}
    \caption{Crossings of arcs in $\pi_g(y)$ may be removed by perturbing.}
    \label{levelb}
\end{figure}

Lastly, if the surface framing of an arc $\pi_g(y_i)$ in $\Sigma$ does not agree with the framing of $y_i$, we perturb $\Sigma$ near the endpoints of $\pi_g(y_i)$, and push $\pi_g(y_i)$ off of and back onto $\Sigma$ with the surface framing.  See Figure \ref{levelc}. Note that this entire process may be achieved by isotopy of $g$, after which we may assume that the surface framings of arcs in $\pi_g(y)$ agree with the surface framings of arcs in $y$. 

\begin{figure}[h!]
  \centering
    \includegraphics[width=.8\textwidth]{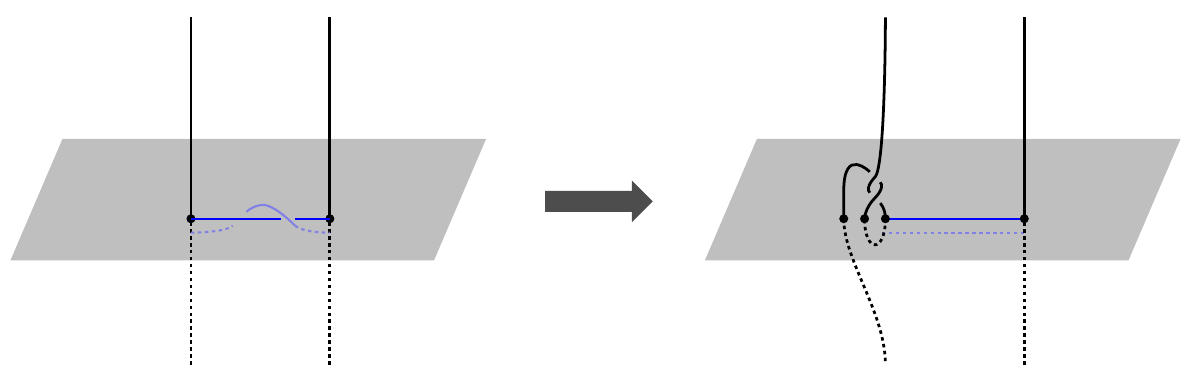}
    \caption{After perturbing, we may assume that the surface framing of each arc in $\pi_g(y)$ agrees with the surface framing of the corresponding arc in $y$.}
    \label{levelc}
\end{figure}

The next step in this process is to further isotope $g$ to get a banded bridge splitting of $(L,\ups)$.  The bridge sphere $\Sigma$ splits $S^3$ into two 3--balls, which we call $B_{12}$ and $B_{31}$.  For each arc $\pi_g(y_i) \in \pi_g(y)$, perturb $\Sigma$ near the endpoints of $\pi_g(y_i)$ so that there are bridge disks $\Delta_i$ and $\Delta_i'$ for $L$ in $B_{12}$ on either end of $\pi_g(y_i)$, and such that $\{\Delta_i\} \cup \{\Delta_i'\}$ is a collection of pairwise disjoint bridge disks.  See Figure \ref{fig:manypert}.  Let $y^* = \pi_g(y)$, and let $\delta^*$ be the union of the shadows $\Delta_i \cap \Sigma$ and $\Delta_i' \cap \Sigma$ for $1 \leq i \leq n$.  Then $\delta^*$ intersects $y^*$ only in points contained in $L$, and each connected component $C$ of $\delta^*\cup y^*$ contains three arcs and is not a simple closed curve.  

\begin{figure}[h!]
  \centering
    \includegraphics[width=.8\textwidth]{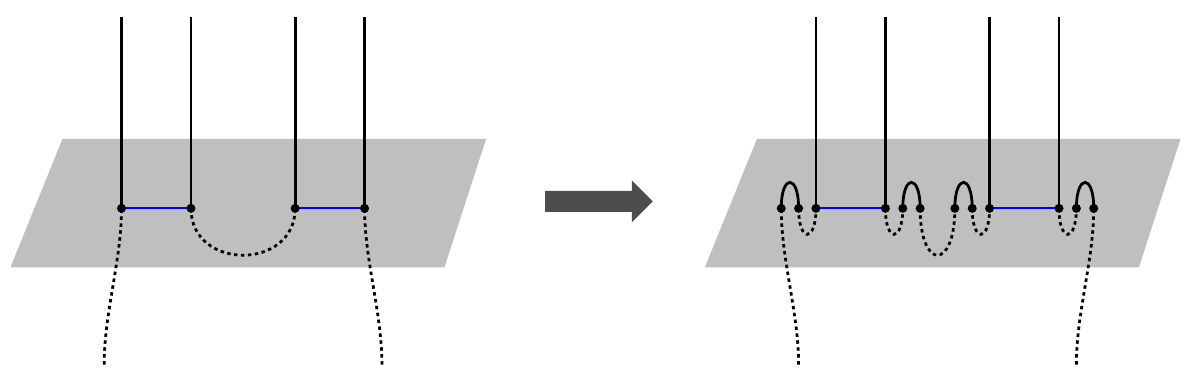}
    \caption{The result of perturbing near the endpoints of each arc in $\pi_g(y)$.}
    \label{fig:manypert}
\end{figure}

Let $D_i$ be the frontier of $\nu(\Delta_i \cup y^*_i \cup \Delta_i')$ in $B_{12}$, so that $\{D_i\}$ is a collection of $n$ pairwise disjoint disks.  By a standard cut-and-paste argument, we may choose a collection $\Delta$ of pairwise disjoint bridge disks for $\A_{12} = L \cap B_{12}$ such that $\Delta \cap D_i = \emptyset$ for all $i$.  Thus, $\Delta$ gives rise to a set of shadow arcs $\A_{12}^*$ for $\A_{12}$ such that $\A_{12}^* \cup y^*$ is a collection of pairwise disjoint embedded arcs. (Note that the collection $\Delta$ may contain more disks than the collection $\{D_i\}$, since some disks bridge disks for $\A_{12}$ are not adjacent to any arcs of $y^*$.) 

Let $\A_{31} = L \cap B_{31}$.  We conclude that
\[ (S^3,L,\ups) = (B_{12},\A_{12}) \cup_{\Sigma,y^*} (B_{31},\A_{31})\]
is a banded bridge splitting $\BB$, and thus $(S^4,\K)$ admits a bridge trisection $\T(\BB)$, by Lemma \ref{lemma:morsetri}.
\end{proof}

\section{Classification of simple bridge trisections}\label{sec:class}

In this section, we discuss several facts about surfaces that admit low-complexity bridge trisections.  Although the cases are introduced as $c_1 = b$, $c_1 = b-1$, and $c_2 = 1$, the conclusions apply for any reindexing of the $c_i$'s.

\subsection{The case $c_1 = b$}\ 

\begin{proposition}
	If $\T$ is a $(b;b,c_2,c_3)$--bridge trisection of a knotted surface $\K$ in $S^4$, then $c_2=c_3$, and $\K$ is the unlink of $c_2$ unknotted 2--spheres.
\end{proposition}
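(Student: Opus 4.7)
The plan is to exploit the extremal hypothesis $c_1 = b$ via the induced handle decomposition of $\K$ from Remark \ref{rmk:handles}, and then realize $\K$ explicitly as a doubled trivial disk system.

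First, applying Remark \ref{rmk:handles} with the permutation $(i,j,k) = (2,1,3)$, the trisection $\T$ induces a handle decomposition of $\K$ with $c_2$ 0--handles, $b - c_1 = 0$ 1--handles, and $c_3$ 2--handles. Without 1--handles, the 0--handle stage is a disjoint union of $c_2$ disks with $c_2$ disjoint boundary circles, and each 2--handle caps off one such circle. Since $\K$ is closed, every boundary circle must be capped by a distinct 2--handle, forcing $c_2 = c_3$; and $\K$ is a disjoint union of $c_2$ 2--spheres, each built from one 0--handle and one 2--handle.

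To identify $\K$ precisely, I would use the symmetry in the construction of Lemma \ref{lemma:trimorse} (explicitly noted in Remark \ref{rmk:handles}) to obtain, from $\T$, a banded link presentation $(L, \ups)$ of $\K$ in which $L_1$ plays the ``middle'' role: here $L$ is a $c_2$--component unlink and $|\ups| = b - c_1 = 0$. The realizing surface $\K = \K(L, \emptyset)$ is then obtained by capping $L \subset S^3$ with a trivial $c_2$--disk system in each of the two $4$--balls $B^4_\pm$ bounded by the equatorial $S^3 \subset S^4$.

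Finally, since $L \subset S^3$ is an unlink, there exist pairwise disjoint $3$--balls $V_1, \dots, V_{c_2} \subset S^3$ whose boundaries meet $L$ in its individual components; these extend to pairwise disjoint $4$--balls $W_1, \dots, W_{c_2} \subset S^4$. By Proposition \ref{prop:trivialdisks}, the trivial disk systems capping $L$ in $B^4_\pm$ are unique up to isotopy rel boundary, so we may arrange them so that the two disks bounding the $i$th component of $L$ both lie in $W_i$. Their union is an unknotted 2--sphere in $W_i$ (the double of a trivial disk in a 4--ball), and since the $W_i$ are pairwise disjoint, $\K$ is the unlink of $c_2$ unknotted 2--spheres. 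The main subtlety I anticipate is appealing cleanly to the symmetric form of Lemma \ref{lemma:trimorse}, since the lemma is stated with a fixed indexing and the banded link with $|\ups| = 0$ requires the permuted version.
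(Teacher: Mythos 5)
Your proposal is correct and follows essentially the same route as the paper: apply Remark \ref{rmk:handles} with the indexing $(i,j,k)=(2,1,3)$ to get a handle decomposition of $\K$ with no 1--handles, conclude that $(S^4,\K)$ is the union of two trivial disk systems, and invoke Proposition \ref{prop:trivialdisks} to identify $\K$ as the unlink of $c_2=c_3$ unknotted 2--spheres. The paper's proof is terser (it leaves the doubling argument to the earlier discussion following Proposition \ref{prop:trivialdisks}), but the details you fill in about disjoint 4--balls and uniqueness of the capping disks are exactly the intended justification.
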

\begin{proof}
By Remark \ref{rmk:handles}, $\T$ induces a handle decomposition with $c_2$  0--handles, $b-c_1$ 1--handles, and $c_3$ 2--handles.  If $c_1=b$, it follows that $\K$ admits a handle decomposition with no 1--handles, and must be the union of two trivial disk systems.  By Proposition \ref{prop:trivialdisks}, that $\K$ is an unlink of $c_2=c_3$ unknotted 2--spheres.   
\end{proof}

One corollary of this proposition is that bridge number detects the unknot.

\begin{corollary}
	Let $\U$ denote the unknotted 2--sphere in $S^4$.  Let $\K$ be a knotted surface with $b(\K)=1$.  Then $\K=\U$.
\end{corollary}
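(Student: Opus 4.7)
The plan is to reduce the statement directly to the preceding proposition. Suppose $b(\K) = 1$, and fix a $1$-bridge trisection $\T = (X_1,\D_1) \cup (X_2,\D_2) \cup (X_3,\D_3)$ of $(S^4,\K)$ with parameters $(1;c_1,c_2,c_3)$.

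First I would pin down the $c_i$. Recall from the discussion following the definition of a bridge trisection that $b \geq c_i$, since the link $L_i = \partial \D_i \subset \partial X_i$ admits a $b$-bridge splitting and has $c_i$ components. Hence $c_i \leq 1$ for each $i$. The case $c_i = 0$ is impossible when $b=1$: a trivial $0$-disk system has empty boundary, but then the $b$-strand trivial tangles $(B_{ij},\A_{ij})$ incident to $X_i$ would be forced to have $b=0$ strands, contradicting $b=1$. Therefore $c_1 = c_2 = c_3 = 1$.

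Now I would invoke the preceding proposition with $c_1 = b = 1$. That proposition asserts that $\T$ must be a trisection of an unlink of $c_2$ unknotted $2$-spheres; since $c_2 = 1$, the surface $\K$ is a single unknotted $2$-sphere, i.e.\ $\K = \U$.

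There is essentially no obstacle here: once one observes that $b \geq c_i$ forces $c_i = 1$ in the bridge number one setting, the corollary is an immediate specialization of the previous proposition. The only delicate point is ruling out $c_i = 0$, which I addressed above by noting that the tangle-strand count $b$ is shared across all three pairwise intersections of the pieces $X_i$.
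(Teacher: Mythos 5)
Your argument is correct and is exactly the reduction the paper intends: the corollary is stated without proof as an immediate consequence of the preceding proposition, via the observation that $b=1$ forces $c_1=c_2=c_3=1$ (your careful exclusion of $c_i=0$ is a nice touch the paper leaves implicit). Nothing further is needed.
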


\subsection{The case $c_1 = b-1$}\ 

Suppose that $\K$ admits a $(b;b-1,c_2,c_3)$--bridge trisection.  Following the discussion in Subsection \ref{subsec:branched}, the double branched cover $X(\K)$ of $\K$ admits a $(b-1;b-2,c_2-1,c_3-1)$--trisection, $\widetilde T$.  In this case, we may apply the main theorem of \cite{msz}, which asserts that $X(\K)$ is the connected sum of copies of $S^1 \X S^3$ and at most one copy of $\CP^2$ or $\overline{\CP}^2$, and $\widetilde T$ is the connected sum of standard genus one trisections. 

Thus, since the branched double cover of $\T$ is standard, it seems reasonable to conjecture that $\T$ is also standard and $\K$ is an unlink.

\begin{conjecture}
	Every $(b;b-1,c_2,c_3)$--surface is an unlink of unknotted 2--spheres and at most one unknotted projective plane.	
\end{conjecture}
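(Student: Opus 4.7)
The plan is to apply the branched double cover construction and exploit the classification of trisections with $k_1 = g - 1$. A $(b;b-1,c_2,c_3)$-bridge trisection $\T$ of $\K$ induces a $(b-1;b-2,c_2-1,c_3-1)$-trisection $\widetilde{\T}$ of $X(\K)$ with $k_1 = g - 1$, and by the main theorem of \cite{msz}, $X(\K) \cong \#^{b-2}(S^1 \times S^3) \# \varepsilon \CP^2$ for some $\varepsilon \in \{-1,0,1\}$, with $\widetilde{\T}$ a connected sum of standard genus-one trisections. The strategy is to descend this connected-sum decomposition of $\widetilde{\T}$ equivariantly under the covering involution, thereby exhibiting $\K$ as a split union of atomic pieces, each either an unknotted 2-sphere or an unknotted projective plane.

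I would first identify the atomic bridge trisections that lift to the standard genus-one summands: the $(1;1,1,1)$-trisection of an unknotted 2-sphere lifts to the trivial trisection of $S^4$; the $(2;2,2,2)$-trisection of a 2-component unlink of 2-spheres lifts to the standard genus-one trisection of $S^1 \times S^3$; and the $(2;1,1,1)$-trisection of an unknotted projective plane lifts to the standard genus-one trisection of $\CP^2$ or $\overline{\CP}^2$. By Theorem \ref{thm:standard} each of these bridge trisections is the unique minimal one realizing its surface. Moreover, the hypothesis $c_1 = b - 1$ forces exactly one $\varepsilon\CP^2$-type summand in $X(\K)$, which should correspond on the base to exactly one unknotted projective plane component (or none when $\varepsilon = 0$), with the remaining $S^1 \times S^3$ summands accounting for the additional unknotted 2-sphere components of $\K$.

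The core step is to descend the standardness of $\widetilde{\T}$ equivariantly. A connected-sum decomposition of $\widetilde{\T}$ is realized by a system of reducing spheres in the spine, each meeting the trisection surface in a single curve and bounding disks in all three handlebodies. I would argue that these reducing spheres can be chosen invariant under the branching involution, so that they descend to 2-spheres in the spine of $\T$ bounding 3-balls in $S^4$ that either miss $\K$ (yielding a split decomposition) or meet $\K$ transversely in a single trivial circle (yielding a connected sum with an unknotted 2-sphere, which does not alter the component type). Iterating reduces $\T$ to a split union of atomic bridge trisections and gives the classification. The principal obstacle is the equivariance step itself: an abstract reducing sphere in $\widetilde{\T}$ need not respect the covering involution, and producing a simultaneously invariant system requires a 4-dimensional equivariant normal-form argument. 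I would attack this by exploiting the explicit standardness of $\widetilde{\T}$, which supplies concrete model involutions on each genus-one summand, or alternatively via a direct diagrammatic reduction using Theorem \ref{thm:tri-planemoves} to bring the tri-plane diagram of $\T$ into split-union form, bypassing equivariance at the cost of a detailed combinatorial analysis.
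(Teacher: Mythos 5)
This statement is stated in the paper as a \emph{conjecture}, not a theorem: the authors explicitly write that, since the branched double cover of such a trisection is standard by \cite{msz}, ``it seems reasonable to conjecture'' the classification, and they offer no proof. Your proposal does not close that gap; it recapitulates the heuristic that led the authors to formulate the conjecture and then stops exactly at the point where the argument becomes hard.

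Concretely, the first two paragraphs of your proposal (pass to $X(\K)$, apply the main theorem of \cite{msz} to get a connected sum of standard genus-one trisections, identify the atomic bridge trisections whose lifts are the genus-one summands) are correct and match the paper's own discussion in the subsection on the case $c_1 = b-1$. The entire content of the conjecture lives in your ``core step'': descending the connected-sum decomposition of $\widetilde\T$ equivariantly with respect to the covering involution. You correctly identify this as the principal obstacle, but then propose to ``attack'' it by two routes without carrying either out. Neither route is routine. The paper's proof of Theorem \ref{thm:standard} (the $b\leq 3$ case) succeeds only because the trisection surface upstairs has genus two, where \cite{haas-susskind} guarantees that cut systems can be isotoped to respect the hyperelliptic involution, so handleslides upstairs descend to bridge-disk slides downstairs. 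No analogue of this equivariance result is available for genus $b-1 > 2$, and an arbitrary reducing sphere for $\widetilde\T$ need not be isotopic to an invariant one; even granting invariance, one must control the quotient (an invariant $S^3$ could a priori meet the branch locus in a knotted circle, or the restricted involution could fail to have quotient $S^3$ at all). Your alternative diagrammatic route via Theorem \ref{thm:tri-planemoves} is likewise only a restatement of the problem: the paper itself notes the conjecture is equivalent to the question of whether attaching a single nontrivial band to an unlink can ever yield an unlink, and that combinatorial question is open. In short, the proposal is a reasonable research plan, but the statement remains unproven both in the paper and in your write-up.
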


Note that if $\K$ has such a trisection, then it has a handle decomposition with a single band.  Hence, the conjecture is related to the question of whether attaching a single nontrivial band to a unlink ever yields an unlink.  It follows that $c_3=c_2$ or $c_3=c_2-1$.

\subsection{The case $c_2 = 1$}\label{subsec:b1}\ 

Suppose that $\K$ admits a $(b;c_1,1,c_3)$--bridge trisection $\T$.  Since the bridge splitting of the unknot $(S^3,L_2) = (B_{12},\A_{12})\cup_{\Sigma} (B_{23},\A_{23})$ is standard, there is a tri-plane diagram $\Pau$ for $\T$ such that $\Pau_{12} \cup \overline{\Pau_{23}}$ is the standard diagram pictured in Figure \ref{fig:b1Diag}.  This choice of trivialization has the desirable property that it is preserved under connected sum, as shown in in Figure \ref{fig:StdConnSum}.  If $\T$ is balanced, then by considering Euler characteristic, we know that either $\K \cong \#^{b-1} \RP^2$ or $\K \cong \#^{b'} T^2$, where $b' = \frac{b-1}{2}$. (In the second case, $b$ must be odd.)

\begin{figure}[h!]
\centering
\includegraphics[scale = .45]{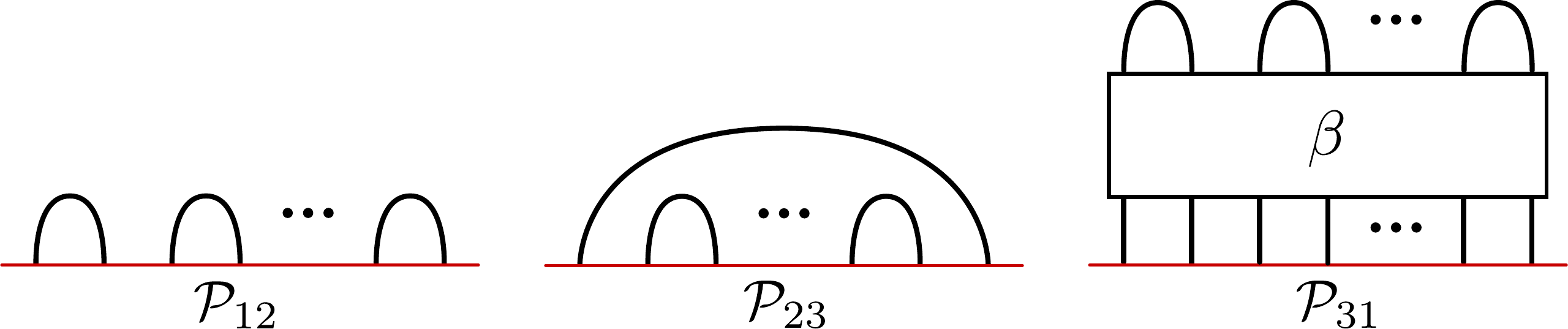}
\caption{A tri-plane diagram for a $(b,1)$--bridge trisection, with the standard trivialization of the first two tangles.}
\label{fig:b1Diag}
\end{figure}

\begin{figure}[h!]
\centering
\includegraphics[scale = .4]{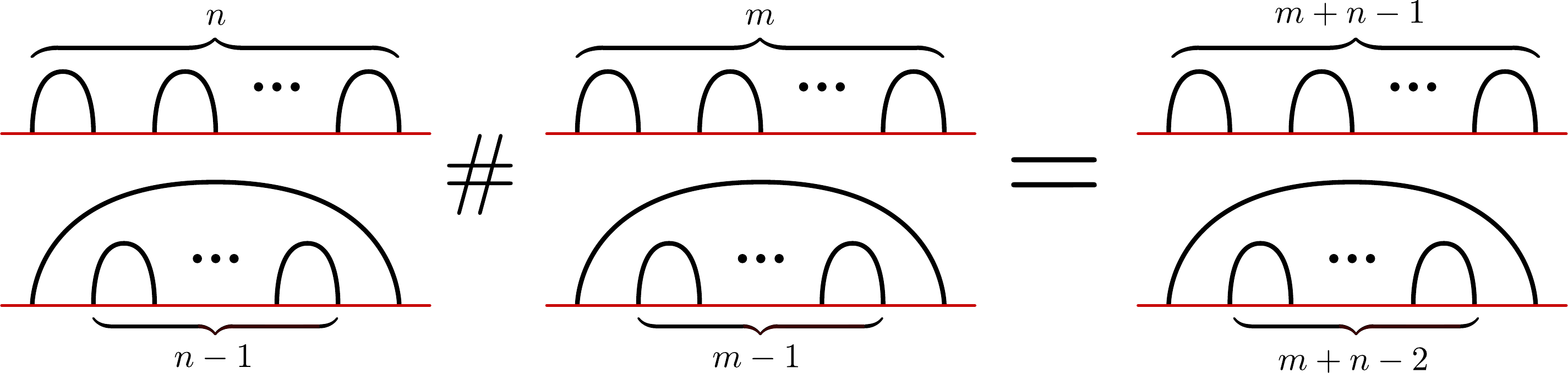}
\caption{The standard trivialization has the property that the form of the first two tangle diagrams is preserved under connected sum.}
\label{fig:StdConnSum}
\end{figure}

First, consider the case $b=2$.  In this case, each $(B_{ij},\A_{ij})$ is a rational tangle with property that the union of any pair is unknotted or unlinked.  It follows this that the slopes of the three tangles must have distance zero or one pairwise.  Our assumption that the first two tangles are standard tells us that they correspond to slopes $0$ and $\infty$.  Thus, the slope of the third tangle must be $0$, $\infty$, or $\pm1$.  If the slope of the third tangle is 0 or $\infty$, the bridge trisection is equivalent to the unbalanced trisection of the unknot shown in Figure \ref{fig:2Bridge}.  In the balanced case, the third slope is $\pm 1$, and there are precisely two surfaces admitting $(2,1)$--bridge trisections, denoted $P_+$ and $P_-$ and pictured in Figure \ref{fig:ProjectivePlanes}(b).  Each of these is homeomorphic to $\RP^2$, and they are distinguished as embeddings in $S^4$ by the Euler number of their normal bundle: $e(P_\pm)=\pm2$. A movie for $P_+$ is shown in Figure \ref{fig:ProjectivePlanes}(a).

\begin{figure}[h!]
\centering
\includegraphics[scale = .4]{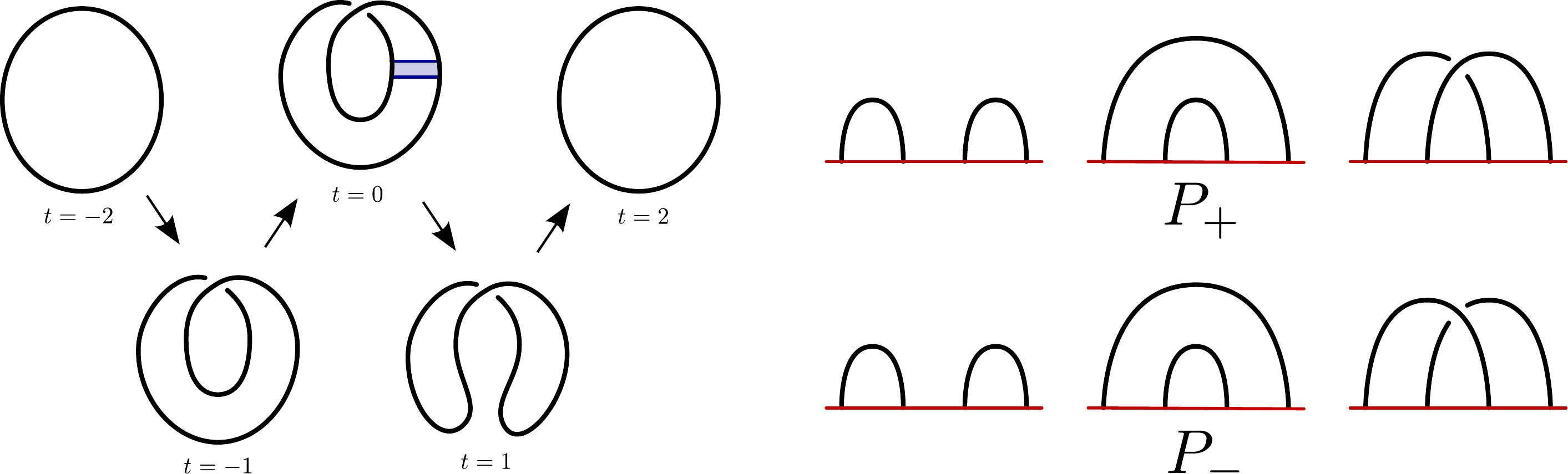}
\put(-260,-20){(a)}
\put(-84,-20){(b)}
\caption{(a) A moving picture description of the unknotted projective plane $P_+$.  (b) The two possible $(2,1)$--bridge surfaces, $P_+$ and $P_-$.}
\label{fig:ProjectivePlanes}
\end{figure}

Let $\K_{i,j} = (\#^iP_+)\#(\#^jP_-)$.  Following \cite{hoso-kawa}, we will say that a non-orientable surface knot $\K$ is \emph{unknotted} if $\K$ is ambient isotopic to $\K_{i,j}$ for some $i,j\in\Z$.  Figure \ref{fig:K32} shows the surface $\K_{3,2}$.  In other words, there are precisely two unknotted projective planes ($P_+$ and $P_-$), and precisely $n+1$ unknotted $\#^n\RP^2$, which are formed as connected sums of $P_+$ and $P_-$ and are distinguished by the Euler class of the their normal bundles \cite{massey}.

\begin{figure}[h!]
\centering
\includegraphics[scale = .3]{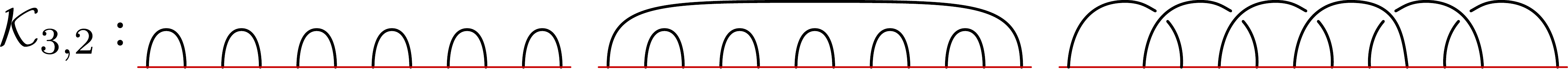}
\caption{One of the standard unknotted non-orientable surfaces homeomorphic to $\#^5\RP^2$.}
\label{fig:K32}
\end{figure}

\subsection{Unknotted surfaces and tri-plane diagrams without crossings}\ 

One obvious way to measure the complexity of a tri-plane diagram is to count the number of crossings.  Crossing number may be a useful way to catalogue knotted surfaces, just as it has been useful to organize classical knots in $S^3$.  As in the classical case, zero crossing diagrams represent simple spaces.

\begin{proposition}\label{zerocrossing}
An orientable knotted surface $\K$ is unknotted if and only if it admits a tri-plane diagram $\Pau$ without crossings.
\end{proposition}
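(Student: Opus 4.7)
The plan is to prove both directions by positioning $\K$ relative to the equatorial $3$-sphere $Y \cong S^3$ of the standard $0$-trisection of $S^4$ inside which the tri-plane $E = E_{12} \cup E_{23} \cup E_{31}$ lives, as described in Subsection \ref{subsec:introexs}.

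For the forward direction, because $\K$ is orientable and unknotted, it bounds a $3$-dimensional handlebody $V \subset S^4$, which up to isotopy I place as the standardly embedded genus-$g$ handlebody inside $Y$. I then arrange a bridge decomposition of $\K = \pd V$ in $Y$ compatible with the tri-plane in the sense that each tangle $\A_{ij} = \K \cap B_{ij}$ lies flat in the planar disk $E_{ij}$. Since each $\A_{ij}$ already lives in its disk, the projection to the tri-plane is an embedding, so the induced tri-plane diagram has no crossings. Concretely I would realize such a diagram as an iterated connect-sum of the crossing-free trisections of $S^2$ shown in Figures \ref{fig:1Bridge} and \ref{fig:2Bridge} with an analogous crossing-free tri-plane diagram for the unknotted torus.

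For the reverse direction, suppose $\Pau$ has no crossings. Each $(B_{ij}, \A_{ij})$ can then be isotoped so that $\A_{ij} \subset E_{ij}$, placing the entire spine $\Ss$ in the tri-plane $E \subset Y$. I apply Lemma \ref{lemma:trimorse} to extract a banded bridge splitting from the trisection; because the spine is already planar, the shadow arcs $\A^*_{12}$ and band arcs $y^*$ used in the proof of that lemma can be chosen inside $\Sigma \cap E$, producing a banded link $(L, \upsilon)$ in which $L$ is a planar unlink and each band of $\upsilon$ lies flat in $\Sigma$ with the surface framing.

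The main obstacle is to conclude that a surface $\K = \K(L, \upsilon)$ coming from a planar banded link is unknotted. The plan is to realize the entire banded-link movie inside the slab $\R^2 \times [-1,1] \subset \R^3 \subset Y$: place $L$ and $L_\upsilon$ in $\R^2$, attach the planar bands $\upsilon$ flat in the plane, and cap off both unlinks with planar disks immediately below and above. This embeds $\K$ inside $\R^3 \subset Y$, and moreover $\K$ is isotopic to the boundary of a regular neighborhood in $\R^3$ of the planar $2$-complex $L \cup \upsilon$ together with its planar caps. Since this $2$-complex deformation retracts onto a finite graph, its regular neighborhood in $\R^3$ is a $3$-dimensional handlebody; hence $\K$ bounds a handlebody in $Y \subset S^4$, and therefore $\K$ is unknotted by definition.
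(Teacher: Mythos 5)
Your direction ``unknotted $\Rightarrow$ crossingless diagram'' is essentially the paper's argument (uniqueness of the unknotted genus--$g$ surface up to isotopy, plus connected sums of the crossingless sphere and torus diagrams), and it is fine. The gap is in the other direction, at its final step. The paper's route is short: since the crossingless tangles lie in the tri-plane and, by Proposition \ref{prop:trivialdisks}, each disk system $\D_i$ is isotopic to planar disks in the $3$--ball $Z_i$ cut off by the tri-plane, the whole surface is isotopic into the equatorial $S^3$; one then quotes Hosokawa--Kawauchi \cite{hoso-kawa}, who prove that an orientable surface contained in $S^3\subset S^4$ is unknotted. That citation does real work: a closed surface in $S^3$ need not bound a handlebody \emph{in $S^3$} on either side, so ``$\K\subset S^3$'' does not by itself produce the bounding handlebody --- the fourth dimension is needed, and that is precisely the content of the theorem you are trying to bypass.

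Your substitute for this citation does not hold up. First, the claim that $\K$ is isotopic to the boundary of a regular neighborhood in $\R^3$ of the $2$--complex consisting of $L\cup\upsilon$ and its caps is false: the boundary of a regular neighborhood of a complex with $2$--dimensional faces is a double along those faces. Already for $L$ a single planar circle with no bands, the complex is a $2$--sphere and $\pd\nu$ is \emph{two} $2$--spheres, not the unknotted $2$--knot; in general the genus and the number of boundary components come out wrong, so the complex retracting to a graph buys you nothing. Second, the bands cannot all be taken flat in the plane containing $L$ and its caps: a subsurface of $\R^2$ has genus zero, whereas for the crossingless diagram of the unknotted torus (Figure \ref{fig:3BridgeDiags}) the union of the lower caps with the bands must be a once-punctured torus. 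In a banded bridge splitting the surface-framed bands stand transverse to the bridge sphere, so the ``lower half'' of $\K$ is genuinely non-planar and your slab picture does not exist. Either produce the bounding handlebody honestly (which amounts to reproving a case of \cite{hoso-kawa}), or stop once you have $\K\subset S^3$ and cite \cite{hoso-kawa}, as the paper does.
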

\begin{proof}
First, suppose that $\K$ has a tri-plane diagram $\Pau$ which contains no crossings.  As in Figure \ref{fig:1Bridge}, we can embed a tri-plane in $S^3$, and since $\Pau$ has no crossings, the tangles $\A_{ij}$ embed in the tri-plane.  Moreover, the tri-plane cuts $S^3$ into three 3--balls, $Z_1$, $Z_2$, and $Z_3$, and the trivial disks $\D_i$ embed in $Z_i$.  Hence, the entire surface $\K$ is isotopic into $S^3 \subset S^4$.  By \cite{hoso-kawa}, $\K$ is unknotted if and only if $\K$ is isotopic into $S^3 \subset S^4$, completing one direction of the proof. 

For the reserve implication, observe that Figure \ref{fig:3BridgeDiags} contains a zero-crossing diagram of a torus $T^2$, which must be unknotted by the above arguments.  If $\K$ is an unknotted surface of genus $g$, $\K$ is unique up to isotopy in $S^4$, and thus we may construct a zero-crossing tri-plane diagram for $\K$ by taking the connected sum of $g$ copies of the 3--bridge diagram for $T^2$.
\end{proof}

\subsection{Classifying 3--bridge trisections}\ 

Having classified 1--bridge and 2--bridge trisections, we turn our attention to 3--bridge trisections.  Following the discussion above, there are three unknotted Klein bottles, and 3--bridge trisections for these three surfaces are shown in Figure \ref{fig:3BridgeDiags}, along with a 3--bridge trisection of the unknotted torus.

\begin{figure}[h!]
\centering
\includegraphics[scale = .3]{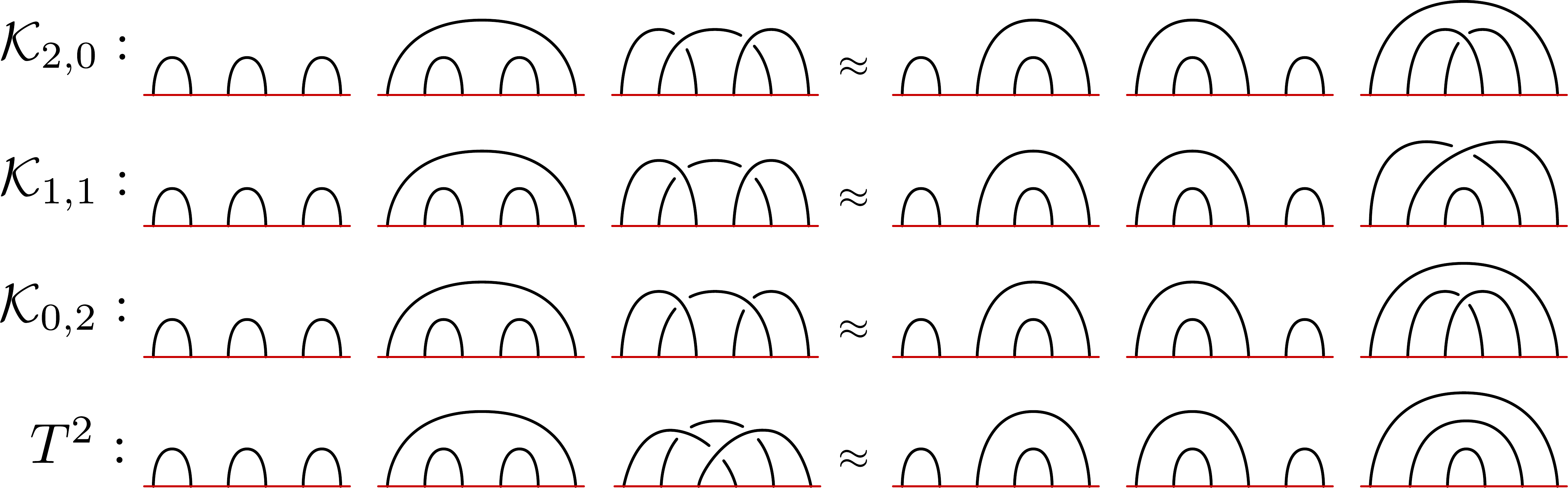}
\caption{Tri-plane diagrams for the four standard 3--bridge surfaces.}
\label{fig:3BridgeDiags}
\end{figure}

So far, we have discovered seven simple balanced bridge trisections: the unique 1--bridge trisection (corresponding to $\U$), the two balanced two-bridge trisections (corresponding to the two unknotted $\RP^2$s), and the four balanced 3--bridge trisections in Figure \ref{fig:3BridgeDiags}.  We will henceforth refer to these seven trisections as \emph{standard}.  Moreover, any trisection obtained as the connected sum of some number of these standard trisections, or any stabilization thereof, will also be called \emph{standard}.

If $\K$ admits a 3--bridge trisection, then $\Sigma_2(\K)$ admits a 2--trisection, as discussed above in Subsection \ref{subsec:branched}. In \cite{mz:genus2}, it is shown that every balanced 2--trisection is standard, and in \cite{msz} the unbalanced case is resolved.  These results imply a classification of 3--bridge surfaces. 

\begin{reptheorem}{thm:standard}
Every knotted surface $\K$ with $b(\K)\leq 3$ is unknotted and any bridge trisection of $\K$ is standard.
\end{reptheorem}

\begin{proof}
	Let $\K$ be a knotted surface in $S^4$ and let $\T$ be a 3--bridge trisection of $\K$.  Let $\widetilde\T$ denote the genus two trisection of $X(\K)$.  Let $(B_{12},\A_{12})\cup(B_{23},\A_{23})\cup(B_{31},\A_{31})$ be the spine of $\T$. The branched double cover of this spine is a triple of handlebodies with common boundary surface $H_{12}\cup H_{23}\cup H_{31}$, which determine $\widetilde T$.  A triple of choices of bridge disks for the three tangles $(B_{ij},\A_{ij})$ lift to give a triple of cut systems for the three handlebodies $H_{ij}$.
	
	By Theorem 1.3 of \cite{mz:genus2}, $\widetilde\T$ is standard. This means that there is a sequence of triples of cut systems for the $H_{ij}$, each of which arises from the previous via a single handleslide in one of the handlebodies, such that the terminal triple is one of the standard trisections described in \cite{mz:genus2}.
	
	Because we are working on a genus two surface, we can apply \cite{haas-susskind}, which states that every cut system can be arranged to respect the hyperbolic involution of the handlebodies.  It follows that each triple of cut systems descends to a triple of collections of bridge disks for the $(B_{ij}, \A_{ij})$.  In other words, each handleslide performed upstairs descends to a bridge disk slide downstairs.
	
	Since the terminal triple of cut systems in this sequence is standard, the bridge disk systems in the quotient must be standard.  It follows that $\T$ is standard.
\end{proof}

Now that we have dispensed of $b$--bridge trisections for $b\leq 3$, we natuarlly turn our attention to 4--bridge trisections.  We resume this thread in Section \ref{sec:exs}, where we prove that there are infinitely many nontrivial $b$--bridge surfaces with $b\geq 4$.  Before proceeding further, we must discuss the fundamental group of knotted surface complements.

\subsection{The fundamental group of a knotted surface}\label{subsec:fund}\ 

Let $\K$ be a knotted surface in $S^4$, and let $\pi(\K)=\pi_1(S^4\setminus\K)$.  The next proposition results from the techniques used in Section \ref{sec:exist}. 

\begin{proposition}\label{prop:fund1}
Let $\T$ be a $(b;c_1,c_2,c_3)$--bridge trisection of $\K$.  Then $\pi(\K)$ has a presentation with $c_i$ generators and $b-c_j$ relations, for any $\{i,j,k\}=\{1,2,3\}$.
\end{proposition}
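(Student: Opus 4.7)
My plan is to use Lemma \ref{lemma:trimorse} and Remark \ref{rmk:handles} to obtain a banded link presentation $(L,\ups)$ of $\K$ adapted to the prescribed indexing, and then to read off the fundamental group of the complement directly. For any fixed permutation $\{i,j,k\}$ of $\{1,2,3\}$, Remark \ref{rmk:handles} provides such an $(L,\ups)$ with $L$ a $c_i$-component unlink and $|\ups|=b-c_j$; it then suffices to show that $\pi(\K)=\pi_1(S^4\setminus\K)$ admits a presentation with $c_i$ generators and $b-c_j$ relations determined by this banded link.

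I would first deformation retract the exterior onto a 2-dimensional spine. Let $h$ be a hyperbolic splitting of $(S^4,\K)$ realizing $(L,\ups)$, and slice $S^4$ along $h^{-1}(\pm \tfrac{1}{2})$ into pieces $B_-$, $M\cong S^3\times I$, and $B_+$, containing the minima, saddles, and maxima of $h|_\K$, respectively, with respective complements $W_-$, $W_M$, $W_+$. Since $(B_\pm, \K\cap B_\pm)$ is a trivial $c_i$- or $c_k$-disk system, $W_\pm$ deformation retracts onto the complement of an unlink in $S^3$, giving $\pi_1(W_-)=F_{c_i}$ freely generated by the meridians of $L$ and $\pi_1(W_+)=F_{c_k}$ freely generated by the meridians of $L_\ups$.

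The core step is to show that $\pi_1(W_M)$ admits a presentation with the $c_i$ meridians of $L$ as generators and exactly $b-c_j$ relations, one contributed by each saddle. By a local Morse-theoretic analysis at each saddle, I would argue that $W_M$ deformation retracts onto a 2-complex obtained from $S^3\setminus L$ (which retracts to a wedge of $c_i$ circles) by attaching one 2-cell per band, whose attaching word is determined by the banded link data. Applying Seifert--van Kampen to $W=W_-\cup W_M\cup W_+$ then finishes the proof: the amalgamation with $W_-$ is trivial, since $W_-\cap W_M\simeq S^3\setminus L$ includes into $W_-$ via a $\pi_1$-isomorphism; and the amalgamation with $W_+$ introduces $c_k$ new generators identified with words in the meridians of $L$, which can be eliminated via their defining relations. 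The result is a presentation of $\pi(\K)$ with $c_i$ generators and $b-c_j$ relations, and since the permutation $\{i,j,k\}$ was arbitrary, the proposition follows.

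The main obstacle is the local analysis at each saddle, namely that a single band contributes exactly one relation. I expect this to follow from the standard duality in 4-manifold handle theory: attaching a 1-handle to the codimension-2 submanifold $\K\cap M$ corresponds to attaching a 4-dimensional 2-handle to the exterior $W_M$, which by Seifert--van Kampen imposes precisely one relator on the fundamental group. A concrete version of this relation can be extracted from a choice of bridge disks for the band dual to shadow arcs in the bridge sphere, which matches the diagrammatic Wirtinger-style presentation arising from a banded link diagram.
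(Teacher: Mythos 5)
Your proposal is correct and follows essentially the same route as the paper: both proofs first invoke Lemma \ref{lemma:trimorse} and Remark \ref{rmk:handles} to obtain a banded link presentation with $c_i$ $0$--handles and $b-c_j$ $1$--handles for $\K$, and then pass to the dual handle decomposition of the exterior, where $1$--handles give generators and $2$--handles give relations. The only difference is that the paper simply cites \cite{gompf-stipsicz} for the fact that each $k$--handle of $\K$ contributes a $(k+1)$--handle to $S^4\setminus\nu(\K)$, whereas you re-derive its $\pi_1$--consequence by an explicit Seifert--van Kampen argument on the slices of the hyperbolic splitting; your sketch of that step is the standard argument and is sound.
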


\begin{proof}
Lemma \ref{lemma:trimorse} and Remark \ref{rmk:handles} tell us how to turn $\T$ into a banded link presentation of $\K$ whose corresponding handle decomposition $\K$ has $c_i$ 0--handles, $b-c_j$ 1--handles, and $c_k$ 2--handles for any bijection $\{i,j,k\}\leftrightarrow\{1,2,3\}$.  This decomposition, in turn, induces a handle decomposition of $S^4\setminus\nu(\K)$ with one 0--handle, $c_i$ 1--handles, $b-c_j$ 2--handles, $c_k$ 3--handles, and one 4--handles. (See \cite{gompf-stipsicz} for details.) In any handle decomposition of a manifold with one 0--handle, the 1--handles give rise to generators of the fundamental group, while the 2--handles give rise to relations.  It follows that we have a presentation for $\pi_1(S^4\setminus\nu(\K))$ with $c_i$ generators and $b-c_j$ relations.
\end{proof}

Returning to the case in which $\K$ admits a $(b;c_1,1,c_3)$--bridge trisection, we notice that, for any such $\K$, we have that $S^4\setminus\K$ has a presentation with one generator.  It follows that $\pi(\K)$ is cyclic.  The group will be $\Z$ or $\Z_2$ according with whether $\K$ is orientable or non-orientable, respectively.  Using this, we obtain the following fact.

\begin{proposition}\label{TopUnknotted}
If $\K$ is orientable and admits a $(b;c_1,1,c_3)$--bridge trisection, then $\K$ is topologically unknotted.
\end{proposition}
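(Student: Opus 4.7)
The plan is to combine Proposition \ref{prop:fund1} with the topological resolution of the Unknotting Conjecture for orientable surfaces. The key observation is that the hypothesis $c_2 = 1$ is designed precisely so that one of the presentations furnished by Proposition \ref{prop:fund1} has a single generator, which forces $\pi(\K)$ to be cyclic.

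First, I would apply Proposition \ref{prop:fund1} with the index $i = 2$. Since $c_2 = 1$, this yields a presentation of $\pi(\K) = \pi_1(S^4 \setminus \K)$ with exactly one generator, so $\pi(\K)$ is cyclic. To identify which cyclic group it is, I would invoke Alexander duality: for a closed surface $\K \subset S^4$ with orientable components of total first Betti number $\beta$, one has $H_1(S^4 \setminus \K;\Z) \cong \Z^{n}$, where $n$ is the number of components of $\K$ (using that $\K$ is assumed orientable). Since $\pi(\K)$ is cyclic and surjects onto its abelianization, $H_1$ is cyclic, which forces $n = 1$, i.e.\ $\K$ is connected, and then $\pi(\K) \cong \Z$.

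Having established $\pi_1(S^4 \setminus \K) \cong \Z$, I would close by citing the topological solution to the Unknotting Conjecture for orientable surface knots due to Freedman, Hillman--Kawauchi, and Kawauchi (already referenced in the introduction as \cite{freedman:disk,hillman-kawauchi,kawauchi:revised}), which asserts that an orientable knotted surface in $S^4$ with infinite cyclic fundamental group of the complement is topologically unknotted. Applying this to $\K$ gives the conclusion.

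There is essentially no obstacle here beyond bookkeeping: the real work has been done in Proposition \ref{prop:fund1} (which gives the cyclic presentation) and in the cited topological unknotting theorem. The only subtlety worth flagging is the passage from ``cyclic'' to ``infinite cyclic,'' which requires using orientability via Alexander duality; without the orientability hypothesis, one could a priori land at $\Z/2$, which is exactly the regime where the Unknotting Conjecture remains open and where Corollary \ref{coro:unknotted} would fail.
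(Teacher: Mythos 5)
Your proposal is correct and follows essentially the same route as the paper: the paper likewise derives a one-generator (hence cyclic) presentation of $\pi(\K)$ from Proposition \ref{prop:fund1} with $c_2=1$, notes that orientability forces the group to be $\Z$ rather than $\Z_2$, and then cites \cite{hillman-kawauchi} and \cite{kawauchi:revised}. Your Alexander duality step spelling out why the cyclic group must be infinite (and why $\K$ must be connected) is a slightly more careful version of what the paper leaves implicit, but it is the same argument.
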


\begin{proof}
Since $\pi(\K)\cong\Z$, the result follows from \cite{hillman-kawauchi} and Kawauchi's revision of his earlier proof \cite{kawauchi:revised}.
\end{proof}

Notice also that if $\K\cong\RP^2$ and $\pi(\K)\cong\Z_2$, then $\K$ is topologically unknotted by a result of Lawson \cite{lawson}.  The general non-orientable case seems to be unknown. This raises the following question.

\begin{question}
Can a surface admitting a $(b;c_1,1,c_3)$--bridge trisection be smoothly knotted?
\end{question}



\section{Nontrivial examples}\label{sec:exs}

In this section, we consider bridge trisections with $b\geq 4$.  In particular, we show how the spinning and twist-spinning constructions can be used to produce interesting $b$--bridge surfaces for arbitrarily large $b$.

\subsection{Spun knots and links}\label{subsec:spin}\ 

The first examples of knotted two-spheres were the spun knots constructed by Artin \cite{artin}.  The construction is as follows:  Let $(S^3, K)$ be a knot, and let $(B^3, K^\circ)$ be the result of removing a small, open ball centered on a point in $K$, so that $K^\circ$ is a knotted arc in $B^3$ with endpoints on the north and south poles, labeled $\mathbf n$ and $\mathbf s$, respectively.  Then, the \emph{spin} $\Ss(K)$ of $K$ is given by
$$(S^4,\Ss(K)) = ((B^3,K^\circ)\times S^1)\cup ((S^2,\{ \mathbf n, \mathbf s\})\times D^2).$$
This gives the familiar description of $S^4$ as $(B^3\times S^1)\cup (S^2\times D^2)$ and realizes $\Ss(K)$ by capping off the annulus $K^\circ\times S^1$ with a pair of disks $\{\mathbf n,\mathbf s\}\times D^2$, one at each pole.

There is an alternate description that splits $(S^4,\Ss(K))$ into two pieces:  First, consider the pair $(B^4, D)=(B^3, K^\circ)\times I$.  Here $\partial(B^4, D)=(S^3, K\#(-K))$, where $-K$ denotes the mirror reverse of $K$.  In fact, $D$ is the standard ribbon disk for $K\#(-K)$, which we call the \emph{half-spin of $K$}. (See Figure \ref{fig:HalfSpin}.)  The double of this ribbon disk gives the spin of $K$: 
$$(B^4, D) \cup (B^4, D)\cong(S^4,\Ss(K)).$$

\begin{figure}[h!]
\centering
\includegraphics[width=.9\textwidth]{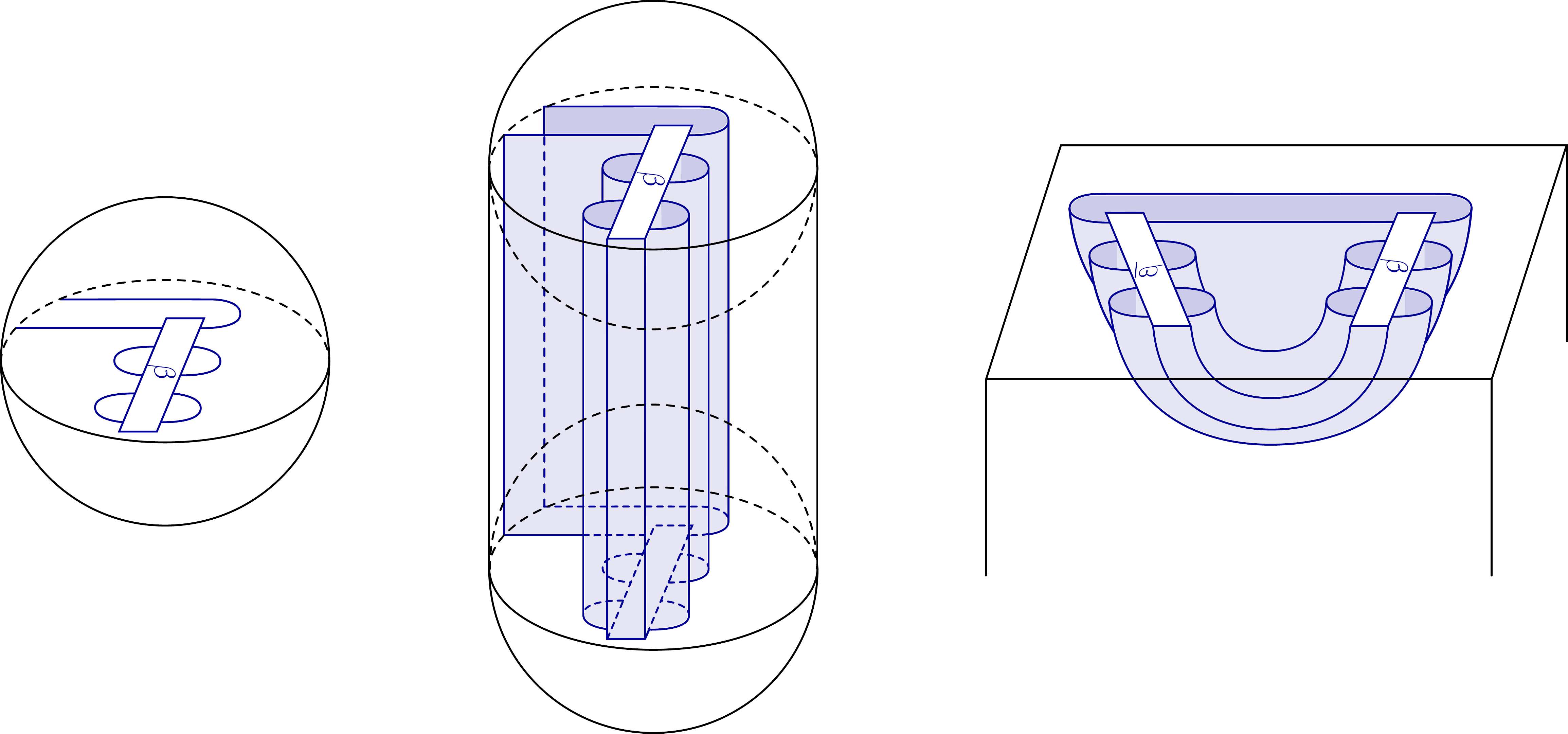}
\put(-355,-15){(a)}
\put(-235,-15){(b)}
\put(-90,-15){(c)}
\caption{(a) The punctured knot $(B^3,K^\circ)$, where $K$ is the plat closure of the braid $\beta$.  The half-spun disk $D$ shown (b) as the product $(B^3,K^\circ)\times I$ and (c) as the standard ribbon disk for $K\#(-K)$ in $B^4$.}
\label{fig:HalfSpin}
\end{figure}

The half-spun disk $D$ can be obtained by attaching bands to $K\#(-K)$.  Alternatively, we can turn this picture upside down and think of $D$ as the result of attaching bands to an unlink $L$ to produce $K\#(-K)$, as shown in Figure \ref{fig:SpinBands}(a).  The bands appearing in this latter view are dual to those appearing in the former.  To double $D$, we take two copies of this picture, one of which has been turned upside down, and glue them together. This corresponds to adding a dual band for each band in Figure \ref{fig:SpinBands}(a). Doing so, we arrive at the banded link description $(L,\ups)$ of $\Ss(K)$ shown in Figure \ref{fig:SpinBands}(b), where half of the bands in $\ups$ come from each of the copies of $D$, and one set has been dualized.

\begin{figure}[h!]
\centering
\includegraphics[scale = .3]{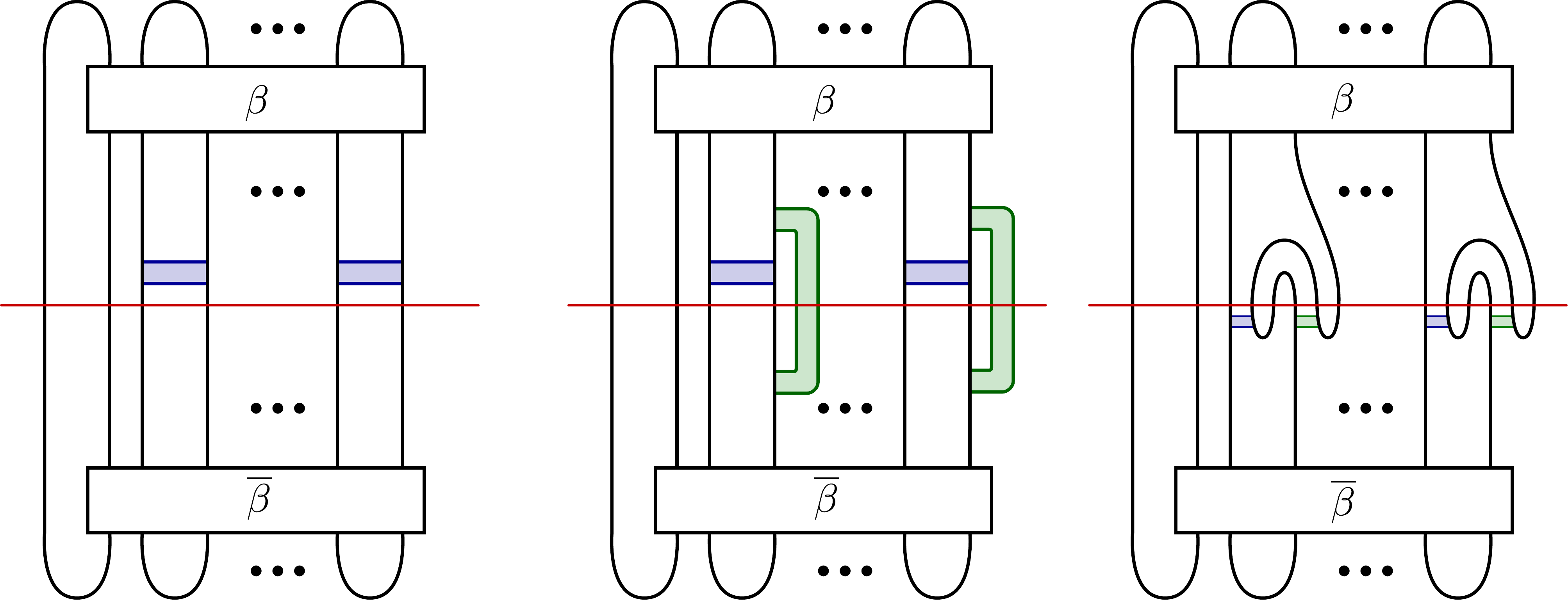}
\put(-300,-15){(a)}
\put(-175,-15){(b)}
\put(-60,-15){(c)}
\caption{(a) A description of the half-spin of $K$, where $K$ is the knot or link given as the plat closure of the braid $\beta$. (b) A banded link diagram for the spin $\Ss(K)$. A perturbation of the bridge sphere produces the banded bridge splitting shown in (c).}
\label{fig:SpinBands}
\end{figure}

Then next step is to transform this banded link presentation into a banded bridge splitting.  This is accomplished by perturbing the link $L$ so that the bands of $\ups$ are level in the bridge sphere and dual to a subset of the bridge disks for one tangle.  The resulting banded bridge splitting is shown in Figure \ref{fig:SpinBands}(c).

Lemma \ref{lemma:morsetri} describes how to transform this banded bridge splitting into a bridge trisection.  The associated tri-plane diagram is shown in Figure \ref{fig:SpinTrisection}. (See Section \ref{sec:exist} for details.)

\begin{figure}[h!]
\centering
\includegraphics[scale = .3]{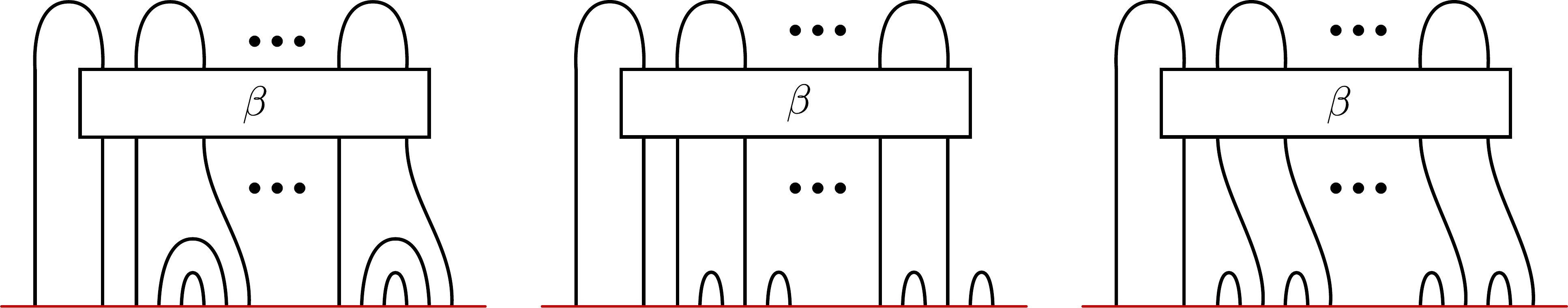}
\caption{A $(3b-2,b)$--bridge tri-plane diagram for the spin $\Ss(K)$ of the $b$--bridge knot or link $K$ given as the plat closure of the braid $\beta$.}
\label{fig:SpinTrisection}
\end{figure}


\subsection{Twist-spinning knots and links}\label{subsec:twist}\ 

In 1965, Zeeman gave a generalization of Artin's construction called \emph{twist-spinning} \cite{zeeman}.  Again, we refer the reader to \cite{CKS} for the standard development.  From our view-point, instead of doubling the half-spun disk as before, we will glue two copies of the half-spun disk together with a twisting diffeomorphism of the boundary.  We can realize this diffeomorphism as the time-one instance of an isotopy of $K^\circ$ in $B^3$.  To form the half-spin of $K$, we take the product $(B^3,K^\circ)\times I$, which we can think of as the trace of the identity isotopy of $K^\circ$ in $B^3$.  Now, we exchange the identity isotopy for one that rotates $K^\circ$ around its axis $m$ times.

\begin{figure}[h!]
\centering
\includegraphics[scale = .3]{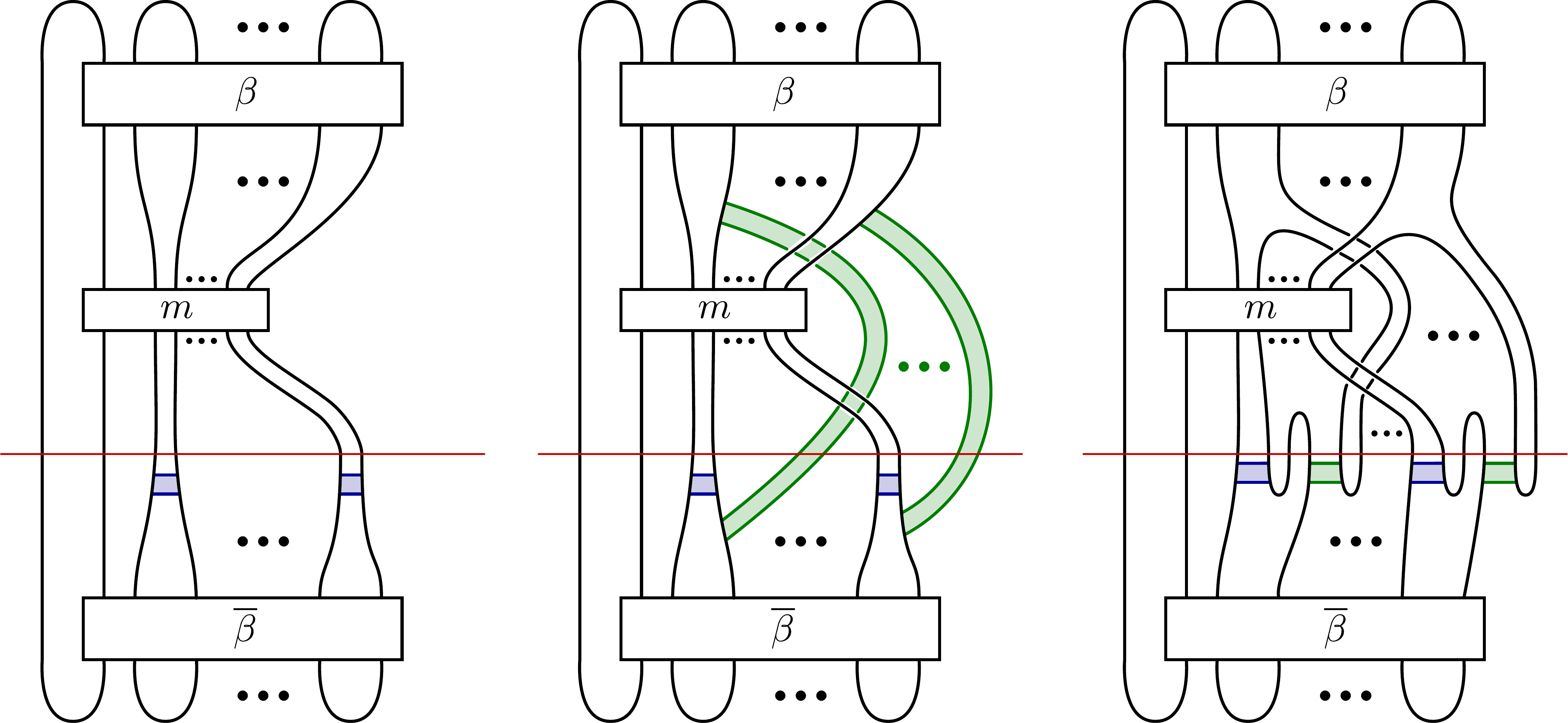}
\put(-320,-15){(a)}
\put(-193,-15){(b)}
\put(-65,-15){(c)}
\caption{(a) A description of the $m$--twisted half-spin of $K$, where $K$ is the knot or link given as the plat closure of the braid $\beta$. (b) A banded link diagram for the $m$--twist spin $\Ss(K)$. A perturbation of the bridge sphere produces the banded bridge splitting shown in (c).}
\label{fig:TwistBands}
\end{figure}

The result is a new knotted disk $D_m$. Just like the half-spin $D$, the disk $D_m$ is obtained by attaching bands to an unlink $L$ to form $K\#(-K)$. However, in the twisted case, the components of the unlink have been twisted $m$ times, as shown in Figure \ref{fig:TwistBands}(a).  Note that $\partial D=\partial D_m = K\#(-K)$ and that $D$ and $D_m$ are actually isotopic as properly embedded disks in the four-ball although they are not isotopic rel $\pd$.  Also, we recover the original half-spun disk when $m=0$; i.e., $D_0=D$.  It follows that $\Ss_0(K)=\Ss(K)$.

Because $D$ and $D_m$ have a common boundary but are not isotopic rel boundary, we can form a new knotted sphere by gluing these two disks along $K\#(-K)$.  The result is the \emph{$m$-twist spin of $K$}, which is denoted $\Ss_m(K)$: $$(S^4, \Ss_m(K))=(B^4, D)\cup(B^4, D_m).$$

A handle decomposition for $\Ss_m(K)$ is shown in Figure \ref{fig:TwistBands}(b).  As before, half of the bands correspond to each disk in the decomposition of $\Ss_m(K)=D_m\cup D$.  In Figure \ref{fig:TwistBands}(c), we have perturbed the bridge splitting of $L$ to obtain a banded bridge splitting.  The induced bridge trisection is shown in Figure \ref{fig:TwistTrisection}.

\begin{figure}[h!]
\centering
\includegraphics[scale = .3]{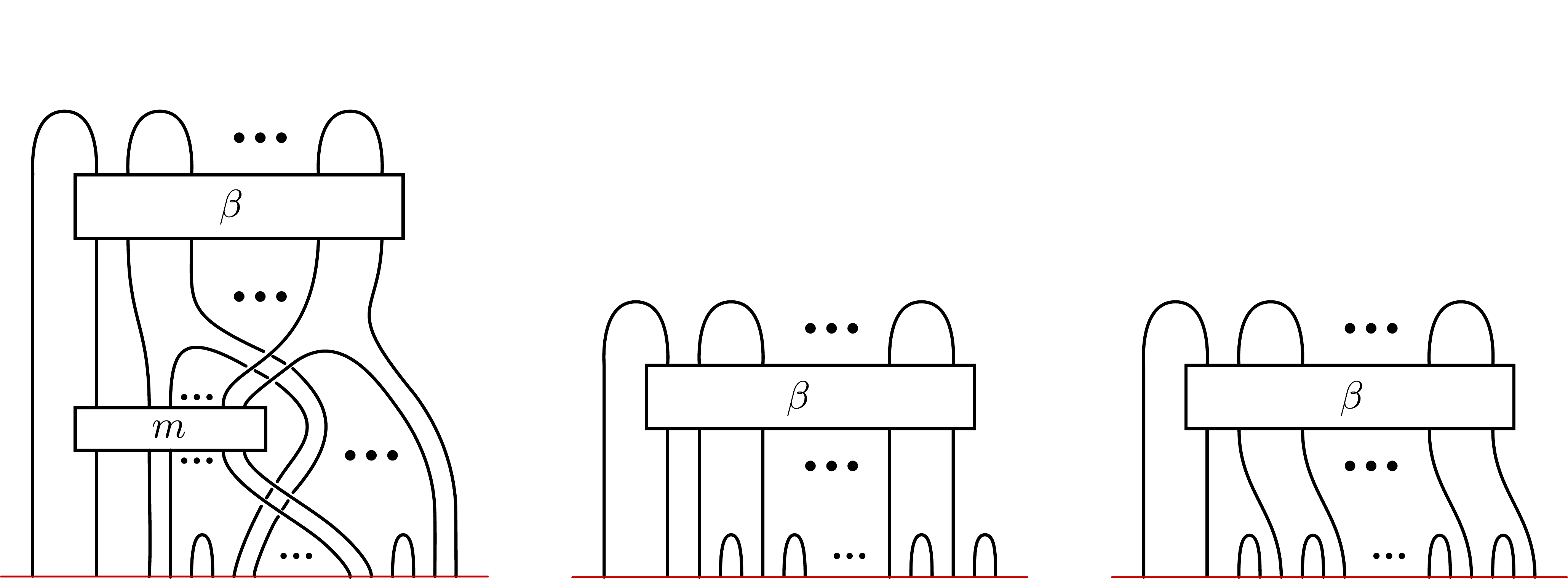}
\caption{A $(3b-2,b)$--bridge tri-plane diagram for the $m$--twist spin $\Ss_m(K)$ of the $b$--bridge knot or link $K$ given as the plat closure of the braid $\beta$.}
\label{fig:TwistTrisection}
\end{figure}

\subsection{Nontrivial bridge trisections}\label{subsec:nontrivial}\ 

Now that we have shown how to construct and trisect the spin $\Ss(K)$ and twist spin $\Ss_m(K)$ associated to a knot $K \subset S^3$, we can produce nontrivial surfaces in $S^4$ with arbitrarily large bridge number.  The following proposition is an immediate consequence of the discussion above and is shown in Figures \ref{fig:SpinTrisection} and \ref{fig:TwistTrisection}.

\begin{proposition}\label{prop:BridgeSpin}
Let $K$ be a $b$--bridge knot or link.  Then, for $m\in\Z$,  $\Ss_m(K)$ admits a $(3b-2,b)$--bridge trisection.
\end{proposition}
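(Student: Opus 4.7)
The proof is essentially a verification that the construction sketched in Subsections \ref{subsec:spin} and \ref{subsec:twist} produces a banded bridge splitting, whose associated bridge trisection has the claimed parameters. The plan is to work directly from a plat presentation of $K$ as the closure of a $2b$-strand braid $\beta$ and carry out the construction explicitly, then count.

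First I would fix notation: write $K$ as the $b$-bridge plat closure of $\beta$, so that $(S^3,K)$ has a canonical bridge splitting in which $K$ meets the bridge sphere $\Sigma$ in $2b$ points. Next I would reproduce the picture in Figure \ref{fig:TwistBands}(a): the $m$-twisted half-spin $D_m$ is described by a collection of bands attached to an unlink $L_0$ to produce $K\#(-K)$, with one family of the unlink components pre-twisted $m$ times. Gluing $D_m$ to $D$ (the untwisted half-spin on the other side) along $K\#(-K)$ and reading off bands dually on the $D$-side yields a banded link $(L,\ups)$ with $\K(L,\ups)=\Ss_m(K)$, as in Figure \ref{fig:TwistBands}(b). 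This part is genuinely just bookkeeping from the movie description of a spun knot.

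The substantive step is to arrange $(L,\ups)$ as a banded bridge splitting in the sense of Section \ref{sec:exist}. Starting from the obvious bridge sphere for $L$ inherited from the plat, I would perturb as in the proof of Theorem \ref{thm:existence}: push each band $\ups_i$ into $\Sigma$, adjust the framing to the surface framing, and perturb near the endpoints of each core arc $y^*_i$ so that the arcs $y^*$ and a dual collection of shadow arcs $\A_{12}^*$ for one of the tangles form a collection of pairwise disjoint embedded arcs on $\Sigma$, as pictured in Figure \ref{fig:TwistBands}(c). This produces a banded bridge splitting $\BB$ to which Lemma \ref{lemma:morsetri} applies, yielding a bridge trisection $\T=\T(\BB)$ of $\Ss_m(K)$ with tri-plane diagram as in Figure \ref{fig:TwistTrisection}.

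The remaining obstacle, and the only real computation, is to verify the parameters $(3b-2,b)$. I would count as follows. Because $L$ is the boundary of $D$, we can arrange $L$ to be a $(2b-1)$-bridge unlink obtained from $b-1$ band resolutions of the $b$-bridge presentation of $K\#(-K)$; dually, each band in $\ups$ that enters $\Sigma$ contributes a single perturbation, so after the perturbation step described above one obtains a $(3b-2)$-bridge sphere. The number of bands $|\ups|=2b-2$ then gives $c_i = (3b-2)-(2b-2)/\cdots = b$ in each of the three tangles by the formula $\chi(\K)=c_1+c_2+c_3-b$ together with $\chi(\Ss_m(K))=2$, so the trisection is balanced with $c_i=b$. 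The hardest point to make rigorous is this bridge count: one must check that the perturbations required to (i) make $y^*$ embedded, (ii) install the surface framing, and (iii) separate endpoints of adjacent bridge disks, can all be accomplished using exactly the $b-1$ additional crossings of the bridge sphere with $L$ that are visible in Figure \ref{fig:TwistBands}(c), and no more. With that verified, the result of Lemma \ref{lemma:morsetri} is a $(3b-2,b)$-bridge trisection of $\Ss_m(K)$, as claimed.
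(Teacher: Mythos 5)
Your strategy coincides with the paper's: Proposition \ref{prop:BridgeSpin} is stated there as an immediate consequence of the constructions in Subsections \ref{subsec:spin} and \ref{subsec:twist}, namely, build the banded link $(L,\ups)$ by doubling the ($m$--twisted) half-spun disk, perturb to a banded bridge splitting as in the proof of Theorem \ref{thm:existence}, and invoke Lemma \ref{lemma:morsetri}. Your first two paragraphs are an accurate expansion of that.

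The parameter count in your final paragraph, however, needs repair. First, $L$ is not the boundary of $D$; that boundary is $K\#(-K)$, which is a $(2b-1)$--bridge knot, not a $b$--bridge knot. Rather, $L$ is the unlink of \emph{minima} of the ribbon disk $D_m$, and the standard ribbon disk for $K\#(-K)$ arising from a $b$--bridge position of $K$ has exactly $b$ minima and $b-1$ saddles; dually, $L_\ups$ is the $b$--component unlink of maxima of $D$, and $\ups$ consists of the $b-1$ saddles of $D_m$ together with the $b-1$ dualized saddles of $D$, so $|\ups|=2b-2$. Second, the identity $\chi(\K)=c_1+c_2+c_3-(3b-2)=2$ only yields $c_1+c_2+c_3=3b$ and cannot by itself establish balancedness; the correct accounting is $c_1=|L|=b$, $c_3=|L_\ups|=b$, and $c_2=(3b-2)-|y^*|=(3b-2)-(2b-2)=b$ directly from the definition of a banded bridge splitting. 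Finally, the bridge count is cleaner than your version: $L$ is a $b$--component unlink in minimal ($b$--bridge) position, and one elementary perturbation per band, hence $2b-2$ perturbations in total, suffices to make every arc of $y^*$ embedded, surface-framed, and dual to a system of shadows (this is what Figures \ref{fig:SpinBands}(c) and \ref{fig:TwistBands}(c) exhibit), giving $b+(2b-2)=3b-2$ bridges. With those corrections the argument is exactly the paper's.
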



A natural question to ask is whether or not a minimal bridge splitting of $K$ gives rise to a minimal bridge trisection of $\Ss_m(K)$ or $\T(K)$. 

\begin{question}\label{question:bridge}
For which knots $b$--bridge knots $K$ does it hold that $b(\Ss_m(K)) = 3b-2$?
\end{question}

Note that if $m=\pm 1$, then $\Ss_m(K)$ is unknotted \cite{zeeman}, so $b(\Ss_{\pm 1}(K))=1$ for all $K$ in $S^3$.  In this case one can ask, with an eye toward Question \ref{question:unknot}, whether or not the resulting bridge trisection is stabilized. 

On the other hand, we will show that spun torus knots satisfy Question \ref{question:bridge}. Thus, for every $b\geq 1$ there are spun knots with $b(\Ss(K)) = 3b-2$.  In order to prove this, we need to obtain a better understanding of the fundamental group of the complement of a knotted surface in $S^4$ from the perspective of bridge trisections.

Let $\K$ be a knotted surface in $S^4$.  A \emph{meridian} for $\K$ is a curve isotopic in $\pd \overline{\nu(\K)}$ which is isotopic to $ \{\text{pt}\} \X S^1$, viewing $\overline{\nu(\K)}$ as $\K \X D^2$.  A generator $x\in\pi(\K)$ is called \emph{meridional} if $x$ is represented by a meridian of $\K$.  A presentation of $\pi(\K)$ is meridional if each generator in the presentation is meridional.  The \emph{meridional rank} of $\K$ is the minimum number of generators among meridional presentations of $\pi(\K)$ and will be denoted $\text{mrk}(\K)$.

Note that the presentation produced in Proposition \ref{prop:fund1} above is meridional. 

\begin{corollary}\label{coro:meridional}
	If $\K$ admits a $(b;c_1,c_2,c_3)$--bridge trisection, then $\text{mrk}(\K)\leq\min\{c_i\}$.
\end{corollary}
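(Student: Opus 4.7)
The plan is to observe that the presentation produced by Proposition \ref{prop:fund1} is itself meridional, so the corollary follows by simply taking the minimum over the index $i \in \{1,2,3\}$. Thus the only substantive content is verifying meridionality of the generators, which I would do by retracing the construction used in the proof of Proposition \ref{prop:fund1}.

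First, I would fix $i$ and apply Lemma \ref{lemma:trimorse} together with Remark \ref{rmk:handles} to obtain a banded link presentation of $\K$ whose associated handle decomposition has exactly $c_i$ 0-handles, $b-c_j$ 1-handles, and $c_k$ 2-handles. Dualizing this inside $S^4$, as in the proof of Proposition \ref{prop:fund1}, yields a handle decomposition of $S^4\setminus\nu(\K)$ with one 0-handle, $c_i$ 1-handles, $b-c_j$ 2-handles, $c_k$ 3-handles, and one 4-handle. The $c_i$ generators in the presentation are exactly the core loops of these $c_i$ 1-handles.

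Next I would check that each such core loop is isotopic to a meridian of $\K$. A 0-handle of $\K$ is a 2-disk $D^2\subset\K$ with a standard product neighborhood $D^2\times D^2\subset S^4$ in which the 0-handle is $D^2\times\{0\}$. Removing an open regular neighborhood of $D^2\times\{0\}$ leaves $D^2\times(D^2\setminus\{0\})$, which deformation retracts onto $\{\mathrm{pt}\}\times S^1$, and this circle is by definition a meridian of $\K$. Under the handle duality, this complementary product region is precisely the 1-handle of $S^4\setminus\nu(\K)$ dual to the 0-handle of $\K$, and its core loop is the meridian just identified. Hence every generator in the presentation of Proposition \ref{prop:fund1} is meridional, which gives $\mathrm{mrk}(\K)\le c_i$.

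Since this argument is valid for each $i\in\{1,2,3\}$, we conclude $\mathrm{mrk}(\K)\le\min\{c_i\}$. The only mildly delicate point — and the main thing to write carefully — is the identification between the core of a dual 1-handle of $S^4\setminus\nu(\K)$ and a meridian of $\K$, but this is a standard consequence of Poincar\'e--Lefschetz duality for handle decompositions (compare \cite{gompf-stipsicz}), so no genuine obstacle arises.
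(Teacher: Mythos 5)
Your proposal is correct and follows exactly the paper's approach: the paper simply notes that the presentation from Proposition \ref{prop:fund1} is meridional and then states the corollary, so the bound $\mrk(\K)\le c_i$ for each $i$ is immediate. Your extra paragraph identifying the core of each dual 1--handle of $S^4\setminus\nu(\K)$ with a meridian of the corresponding 0--handle of $\K$ is the correct justification of the meridionality claim that the paper leaves implicit.
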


We can define meridional generators and meridional rank analogously for knots in $S^3$, and we have the following question, which generalizes a questions posed by Cappell-Shaneson and appearing as Problem 1.11 in \cite{kirby:problems}.

\begin{question}
	If $K$ is a $b$--bridge knot, then does $\text{mrk}(K)=b$?
\end{question}

It is clear that $b(K)\geq\text{mrk}(K)$.  As a corollary to their work on the $\pi$--orbifold group of a knot, Boileau-Zimmermann \cite{boileau-zimmermann} proved that $\mrk(K)=2$ for two-bridge knots, while Rost-Zieschang \cite{rost-zieschang} showed that $\mrk(T_{p,q})=b(T_{p,q})=\min\{p,q\}$.

As a final preliminary, we offer the following.

\begin{proposition}\label{prop:mrkspin}
	Let $K$ be a knot in $S^3$,  Then $\mrk(K)=\mrk(\Ss(K))$.
\end{proposition}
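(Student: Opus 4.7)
The plan is to reduce the equality $\mrk(K) = \mrk(\Ss(K))$ to two ingredients: (i) a classical isomorphism of Artin identifying the two fundamental groups, and (ii) a careful tracking of meridional conjugacy classes under that isomorphism.

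First, I would recall Artin's computation. Using the construction
$$(S^4, \Ss(K)) = ((B^3, K^\circ) \times S^1) \cup ((S^2, \{\mathbf{n},\mathbf{s}\}) \times D^2),$$
I would apply Van Kampen's theorem to the decomposition
$$S^4 \setminus \Ss(K) = \bigl((B^3 \setminus K^\circ) \times S^1\bigr) \cup \bigl((S^2 \setminus \{\mathbf n,\mathbf s\}) \times D^2\bigr),$$
whose intersection is $(\partial B^3 \setminus\{\mathbf n,\mathbf s\}) \times S^1 \simeq S^1 \times S^1$. The first piece has group $G \times \langle t \rangle$, where $G = \pi_1(B^3 \setminus K^\circ) \cong \pi_1(S^3 \setminus K)$ and $t$ is the $S^1$ factor; the second piece is homotopy equivalent to $S^1$. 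The gluing relations force $t = 1$ and identify the two remaining generators. The upshot is that the inclusion of any cross-section $B^3 \setminus K^\circ \hookrightarrow (B^3 \setminus K^\circ) \times S^1 \hookrightarrow S^4 \setminus \Ss(K)$ induces a natural isomorphism $\Phi \colon \pi_1(S^3 \setminus K) \xrightarrow{\cong} \pi_1(S^4 \setminus \Ss(K))$.

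Next I would verify that $\Phi$ sends meridional elements to meridional elements. A meridian $\mu$ of $K$, realized as a small circle in a cross-section $B^3 \times \{t_0\}$ linking $K^\circ$ once, sits in the boundary of a tubular neighborhood of $K^\circ \times \{t_0\} \subset \Ss(K)$ and visibly represents a $\{\text{pt}\} \times S^1$ fiber of $\nu(\Ss(K))$; hence $\Phi(\mu)$ is a meridian of $\Ss(K)$. Because $K$ is a knot and $\Ss(K)$ is a connected $2$-knot, all meridians in each target are conjugate, so $\Phi$ carries the meridional conjugacy class of $K$ bijectively onto that of $\Ss(K)$.

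With this in hand, the equality of meridional ranks is immediate. Given a meridional presentation of $\pi_1(S^3 \setminus K)$ with $n$ generators, applying $\Phi$ produces a presentation of $\pi_1(S^4 \setminus \Ss(K))$ on $n$ meridional generators, whence $\mrk(\Ss(K)) \leq \mrk(K)$. Conversely, any meridional generator of a presentation of $\pi_1(S^4 \setminus \Ss(K))$ is conjugate to $\Phi(\mu)$ for a meridian $\mu$ of $K$, so $\Phi^{-1}$ of such a presentation is a meridional presentation of $\pi_1(S^3 \setminus K)$ with the same generator count, giving $\mrk(K) \leq \mrk(\Ss(K))$. The main (mild) obstacle is the bookkeeping in the Van Kampen step, particularly tracking which generator of $\pi_1((\partial B^3 \setminus \{\mathbf n,\mathbf s\}) \times S^1)$ plays the role of the meridian of $K$; once that is verified, everything else is formal.
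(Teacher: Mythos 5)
Your proof is correct, and the forward inequality $\mrk(\Ss(K))\le\mrk(K)$ is essentially identical to the paper's (push a meridional generating set into a cross-section $B^3\times\{0\}$ and use that the inclusion-induced map is Artin's isomorphism). Where you genuinely diverge is the reverse inequality. The paper argues geometrically: given meridians $\mu_1',\dots,\mu_s'$ generating $\pi(\Ss(K))$, it forms a wedge-like graph $\Gamma$, isotopes it off the sphere $S$ through the poles, projects it along the $S^1$ factor into a single cross-section $B^3\setminus\nu(K^\circ)$, and then invokes Dehn's Lemma on the projected meridian disks to see that the preimages under $\iota$ are honest meridians of $K$. You replace all of this with an algebraic observation: since $K$ is connected and $\Ss(K)$ is a connected $2$--knot, the meridional elements of each group form a single conjugacy class, and an isomorphism carrying one meridian to a meridian therefore carries the entire meridional class bijectively onto the other; pulling back a meridional presentation along $\Phi^{-1}$ is then automatic. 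This is cleaner and avoids the isotopy/projection/Dehn's Lemma bookkeeping entirely, at the cost of leaning on connectedness: for spun links or disconnected surfaces the meridional elements split into several conjugacy classes and your argument would need to match classes to components, whereas the paper's geometric projection argument adapts with little change. For the proposition as stated (a knot $K$, so $\Ss(K)\cong S^2$), your argument is complete.
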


\begin{proof}
Recall the standard decomposition from the beginning of this section: $$(S^4, \Ss(K)) = ((B^3, \K^\circ)\times S^1)\cup((S^2,\{\mathbf n, \mathbf s\})\times D^2).$$
Consider the inclusion $\iota:(B^3,K^\circ)\hookrightarrow (S^4,\Ss(K))$ that maps $(B^3,K^\circ)$ to $(B^3, \K^\circ)\times\{0\}$. Let $\mu_1, \ldots, \mu_r$ be a collection of meridians of $K$ representing a meridional generating set for $\pi(K)$.  Then $\iota(\mu_i)$ is a meridian of $\Ss(K)$ for each $i$, and the induced map $\iota_*:\pi(K)\to\pi(\Ss(K))$ is an isomorphism.  It follows that $\iota_*(\mu_1),\ldots,\iota_*(\mu_r)$ is a meridional generating set for $\pi(\Ss(K))$, so $\mrk(\Ss(K))\leq\mrk(K)$.

Conversely, let $\mu'_1, \ldots, \mu'_s$ be a collection of meridians to $\Ss(K)$ representing a meridional generating set for $\pi(\Ss(K))$.  Let $\Gamma$ be the union of a base point $x_0$, the curves $\mu_1',\dots,\mu_s'$, and $s$ arcs connecting $x_0$ to $\mu_i'$.  Note that each $\mu_i'$ bounds a disk $D_i$ in $\overline{\nu(\Ss(K))}$.  There exists a 2-sphere $S$ in $S^4$ containing $\mathbf n$ and $\mathbf s$ such that $(S^4,\Ss(K)) \setminus \nu(S) \cong(B^3,K^{\circ}) \X S^1$.  After a small perturbation, we may assume that $\Gamma \cap S = \emp$, so that $\Gamma \subset (B^3,K^{\circ}) \X S^1$.  Let $\rho$ denote the natural projection of $(B^3,K^{\circ}) \X S^1$ onto the first factor.  After another small perturbation, $\rho(\Gamma)$ is embedded in $B^3 \setminus \nu(K^\circ)$, and $\Gamma$ is isotopic to $\rho(\Gamma)$ in $S^4 \setminus \nu(\Ss(K))$.

Now, each disk $D_i \subset \overline{\nu(\Ss(K))}$ projects to an immersed disk $\rho(D_i)$ in $\overline{\nu(K^{\circ})}$.  It follows from Dehn's Lemma that $\iota^{-1}(\mu_1),\ldots,\iota^{-1}(\mu_r)$ is a collection of meridians of $K$, and since $\iota_*^{-1}$ is an isomorphism, this is a generating set of meridians for $\pi(K)$.  It follows that $\mrk(K)\leq \mrk(\Ss(K))$, and the proof is complete.  

\end{proof}

We are now well-equipped to prove our next result.

\begin{reptheorem}{thm:nontrivial}
	There exist infinitely many distinct 2--knots with bridge number $3b-2$ for any $b\geq 2$.
\end{reptheorem}

\begin{proof}
	Let $K$ be the torus knot $T_{b,q}$ with $b<q$, so $\mrk(K)=b(K)=b$. Let $\K=\Ss(K)$. By Proposition \ref{prop:BridgeSpin}, $\K$ admits a $(3b-2,b)$--bridge trisection, and by Corollary \ref{coro:meridional}, $\mrk(\K)\leq b$.
	
	On the other hand, by Lemma \ref{prop:mrkspin}, $\mrk(\K)=\mrk(K)=b$. It follows that $b(\K)=3b-2$.  Since there are infinitely many torus knots of the form $T_{b,q}$ for each $b\geq 2$, the result follows.
\end{proof}

We remark that one could easily prove an analogous result involving knotted tori using the turned torus construction, showing that there are infinitely many knotted tori with bridge number $3b$ for each $b\geq 2$.  On the other hand, it is a little less clear how one would extend these results to non-orientable surfaces.  This would be the final step in showing that there are infinitely many knotted surfaces with bridge number $b$ for each $b\geq 4$.

\section{Stabilization of bridge trisections and banded bridge splittings}\label{sec:stab}

In this section, we define stabilization operations for both bridge trisections and banded bridge splittings and prove that our definitions are equivalent.  We will use the banded bridge splitting version of stabilization to prove Theorem \ref{thm:uniqueness} in Section~\ref{sec:unique}.

\subsection{Stabilization of bridge trisections}\label{subsec:stab}\ 

Suppose $\K$ is a knotted surface in $S^4$ equipped with a $b$--bridge trisection $\T$, where components of $\T$ are labeled as above.  Choose one of the trivial disk systems, say $(X_1,\D_1)$.  Recall that in Subsection \ref{subsec:introexs}, we considered $S^4$ to be $\{(x_1,\dots,x_5): x_1^2 + \dots x_5^2 =1\}$ in $\R^5$, and we let $Y = \{(x_1,\dots,x_5)\in S^4:x_5=0\}$, so that $Y \cong S^3$.   Suppose $\rho: S^4 \rightarrow Y$ is the natural projection map, and let $Z_i = \rho(X_i)$.  We may arrange the standard trisection of $S^4$ so that if $E_{ij} = \rho(B_{ij}) = B_{ij} \cap Y$, then $E_{12} \cup E_{23} \cup E_{31}$ is a tri-plane in $S^4$ which cuts $Y$ into the three 3--balls $Z_1$, $Z_2$, and $Z_3$, and $\Sigma \cap Y = \rho(\Sigma) = \pd E_{ij}$. 

By Proposition \ref{linkbridge}, the bridge splitting $(\pd X_1,\pd \D_1) = (B_{12},\A_{12}) \cup_{\Sigma} (B_{31},\A_{31})$ is standard, and as such, there is an isotopy of $\D_1$ so that $L_1 = \pd \D_1 \subset E_{12} \cup E_{31}$.  In other words, we are reasserting the fact that $\K$ has a tri-plane diagram $\Pau = (\Pau_{12},\Pau_{23},\Pau_{31})$ such that $\Pau_{12}$ and $\Pau_{31}$ contain no crossings.  It follows that $L_1$ may be capped off with disks $D_1 \subset Z_1$, which implies that $\D_1$ is isotopic to $D_1$.  Stated another way, we may isotope $\K$ so that for a small neighborhood $\nu(Z_1)$ in $Y$, we have $\K \cap \nu(Z_1) = \rho(\K) \cap \nu(Z_1)$. 

Now, let $\Delta$ be a disk embedded in $Z_1$ which has the following properties:
\be
\item The boundary $\pd \Delta$ is the endpoint union of arcs $\delta_1 \subset D_1$, $\delta_{12} \subset E_{12}$ and $\delta_{31} \subset E_{31}$.
\item $\Delta \cap D_1 = \delta_1$,
\item $\Delta \cap \pd Z_1 = \delta_{12} \cup \delta_{31}$.
\ee
It follows that $\Delta$ meets $\rho(\Sigma)$ in a single point $p$, and we call $\Delta$ a \emph{stabilizing disk}.  To define stabilization, we will consider the standard trisection of $S^4$ to be fixed and isotope $\K$.  For a stabilizing disk $\Delta$, there is an isotopy of $\K$ supported in $\nu(Z_1)$ that consists of pushing $D_1$ across $\Delta$ into $Z_2 \cup Z_3$.  Let $\K'$ be the result of this isotopy, and let $\D_i' = \K' \cap X_i$.  See Figures \ref{fig:StabDisk} and \ref{fig:StabDisk2}.

\begin{figure}[h!]
  \centering
    \includegraphics[width=.8\textwidth]{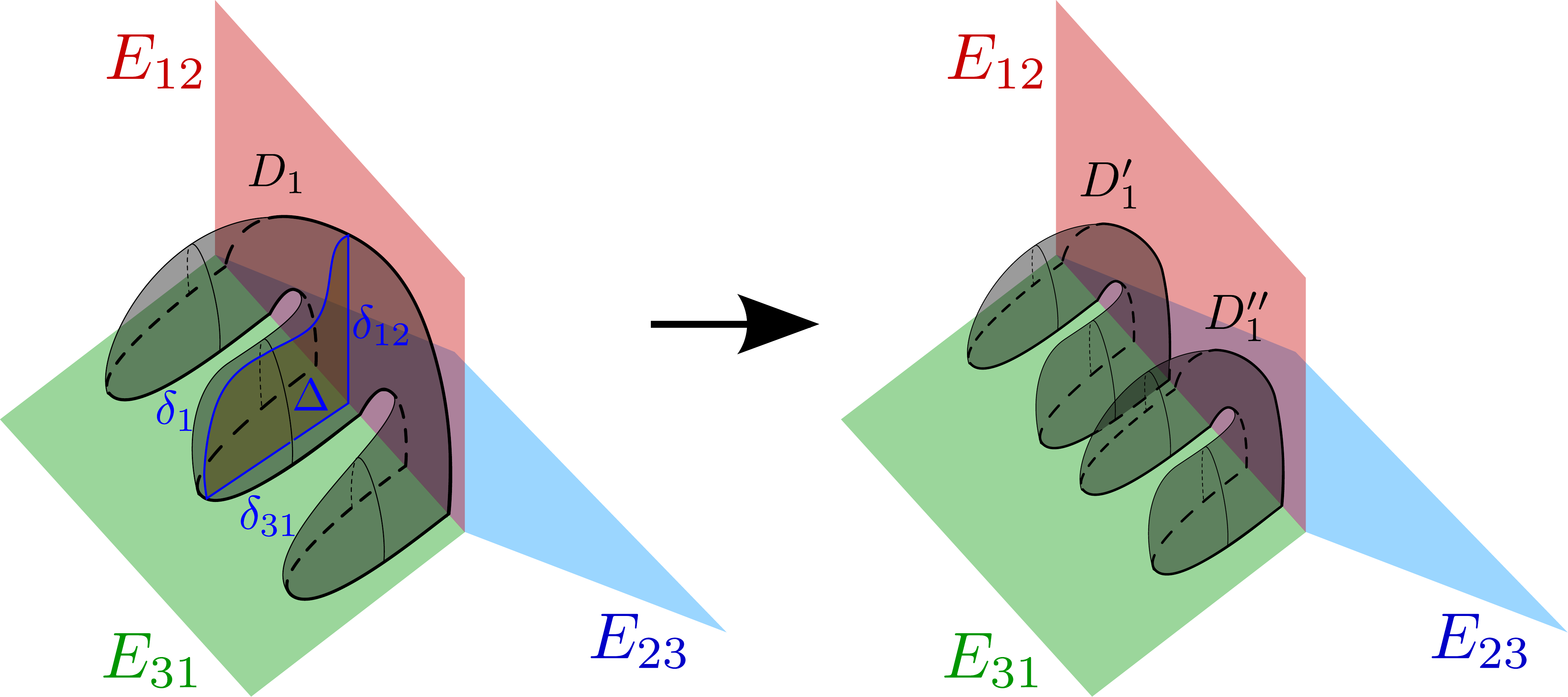}
    \caption{An example of the stabilization operation, which can be thought of as a boundary compression along $\Delta$ that transforms $D_1$ into $D_1'\cup D_1''$.}
    \label{fig:StabDisk}
\end{figure}

\begin{figure}[h!]
  \centering
    \includegraphics[width=.8\textwidth]{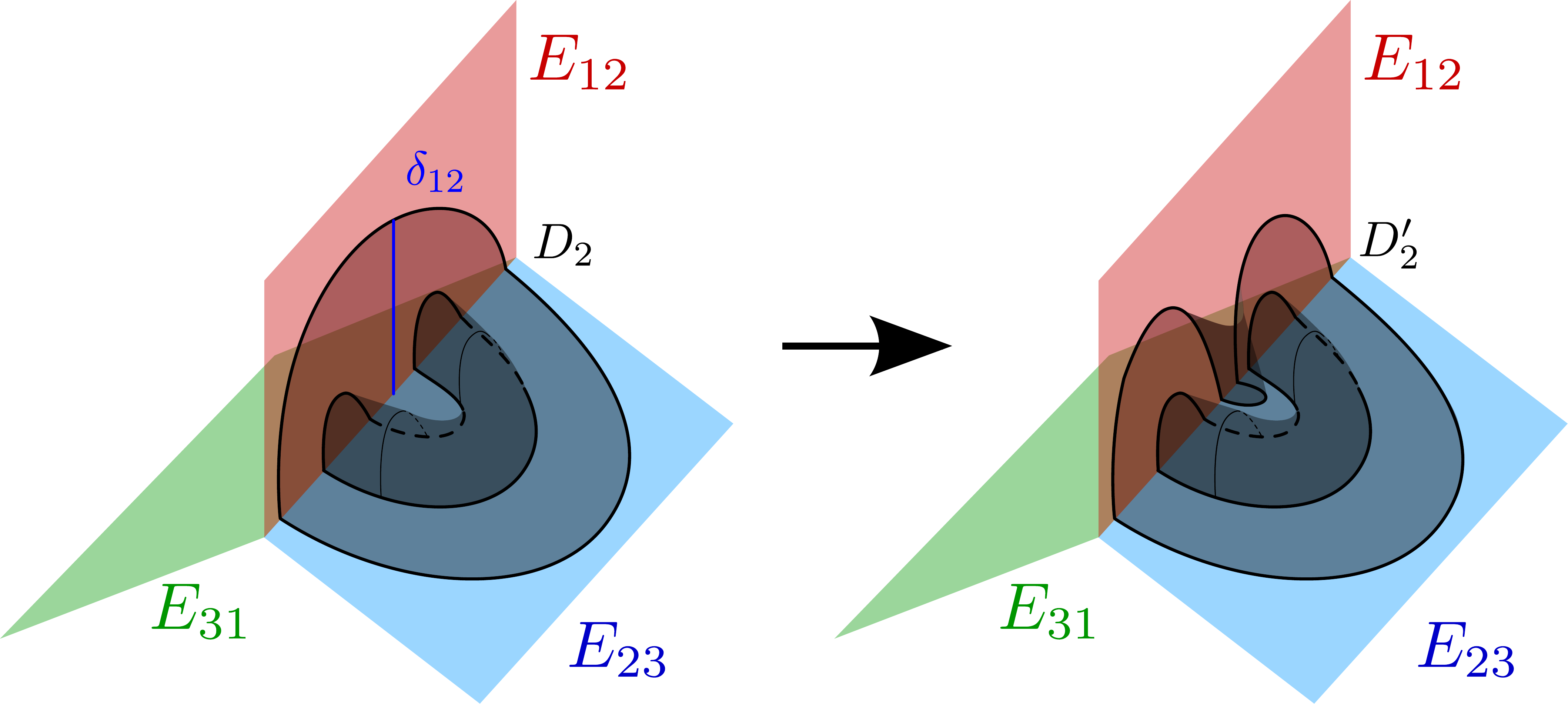}
    \caption{Compressing along a stabilizing disk in $Z_1$ corresponds to dragging a portion of a disk $D_2$ through the bridge sphere in $Z_2$ (and similarly in $Z_3$).}
    \label{fig:StabDisk2}
\end{figure}

\begin{lemma}\label{stabilized}
The decomposition $\T'$ given by $(S^4,\K') = (X_1,\D_1') \cup (X_2,\D_2') \cup (X_3,\D_3')$, is a $(b+1)$--bridge trisection of $(S^4,\K')$.
\end{lemma}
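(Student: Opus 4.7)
The plan is to verify that, with respect to the fixed standard trisection of $S^4$, the isotoped surface $\K'$ admits a $(b+1)$--bridge trisection. Since the isotopy pushing $\D_1$ across $\Delta$ is supported in a neighborhood of $\Delta$, outside $\nu(\Delta)$ nothing changes and the analysis reduces to a local one near $\Delta$.

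First I would count bridge points. Since $\Delta$ meets $\Sigma$ transversely at the single point $p$, the finger created by pushing $D_1$ across $\Delta$ crosses $\Sigma$ transversely in exactly two points near $p$. Hence $|\K' \cap \Sigma| = 2(b+1)$ and each $\A_{ij}'$ has $b+1$ strands. An Euler characteristic calculation combined with the localization of the push in $X_1$ then forces $c_1' = c_1 + 1$ and $c_j' = c_j$ for $j \in \{2,3\}$.

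Second, I would verify that each $(B_{ij}, \A_{ij}')$ is a trivial $(b+1)$--strand tangle by exhibiting a full collection of pairwise disjoint bridge disks. Pre-existing bridge disks for $\A_{ij}$ can be arranged disjoint from $\nu(\Delta)$, and new bridge disks for the newly created arcs are built from pieces of $\Delta$: thin neighborhoods of $\delta_{12}$ in $B_{12}$ and of $\delta_{31}$ in $B_{31}$ yield bridge disks for the new strands of $\A_{12}'$ and $\A_{31}'$, while a short shadow arc on $\Sigma$ near $p$ together with a slice of the finger provides a bridge disk for the new strand of $\A_{23}'$.

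Third, I would verify that each $(X_i, \D_i')$ is a trivial disk system. For $(X_1, \D_1')$, the effect of the push is precisely a boundary compression of $\D_1$ along $\Delta$, splitting the relevant component into $D_1' \cup D_1''$; the stabilizing disk $\Delta$ itself provides a product neighborhood witnessing the simultaneous isotopy of these new pieces into $\pd X_1$ disjointly from the rest of $\D_1'$. For $(X_j, \D_j')$ with $j \in \{2,3\}$, a single disk in $\D_j$ gains a finger supported in the sweep of $\nu(\Delta)$ into $X_j$; the finger is itself trivial via the same product structure and composes with the pre-existing trivializing isotopy of $\D_j$. The main obstacle I anticipate lies here, in verifying the simultaneity of the trivializing isotopies in the presence of the newly introduced features near $\Delta$: one must choose a product neighborhood of $\Delta$ compatible with the pre-existing trivializing isotopies of $\D_1, \D_2, \D_3$, so that all new isotopy tracks remain pairwise disjoint. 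Resolving this reduces to a standard model computation in a 4--ball containing $\Delta$, which can be identified with a stabilization of the trivial $1$--disk system in the local picture.
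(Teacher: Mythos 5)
Your proposal is correct and follows essentially the same route as the paper: boundary-compressing $\D_1$ along $\Delta$ to get the trivial $(c_1+1)$--disk system, dragging components of $\D_2$ and $\D_3$ along $\delta_{12}$ and $\delta_{31}$, surgering the bridge disks of $\A_{12}$ and $\A_{31}$ that meet $\pd\Delta$, and observing that $\A_{23}'$ gains only a single trivial arc near $p$. The only differences are organizational (you count bridge points first and worry explicitly about simultaneity of the trivializing isotopies, which the paper dispatches by noting the relevant disks all lie in the 3--ball $Z_1$ where triviality is automatic).
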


\begin{proof}
First, we will show that each $(X_i,\D_i')$ is a trivial disk system.  The collection $\D_2'$ is obtained from $\D_2$ by dragging a disk component of $\D_2$ along the arc $\delta_{12}$ in $\nu(Z_1) \cap X_2$; thus, $(X_2,\D_2')$ is a trivial $c_2$--disk system (see Figure \ref{fig:StabDisk2}). Similarly, $\D_3'$ is obtained from $\D_3$ by dragging a component along $\delta_{31}$, so that $(X_3,\D_3')$ is a trivial $c_3$--disk system.  The collection $\D_1'$ is obtained from $\D_1$ by surgering a disk component of $D_1$ along the disk $\Delta$. (Such an operation is commonly called \emph{boundary-compressing}.)  This result of this boundary compression on $D_1$ is a pair of disks $D_1'$ and $D_1''$.  See Figure \ref{fig:StabDisk}. Since $\D_1$ is isotopic to a collection of $c_1$ disks properly embedded in the 3--ball $Z_1$, the result $\D_1'$ of boundary compressing $\D_1$ along $\Delta$ is a collection of $c_1 + 1$ disks in $Z_1$.  These disks are necessarily trivial, and it follows that $(X_1,\D_1')$ is a trivial $(c_1+1)$--disk system.

Let $\A_{ij}' \subset B_{ij}$ denote $\D_i' \cap \D_j'$.  To complete the proof, we must show that each $(B_{ij},\A_{ij}')$ is a trivial tangle.  Note first that $\A_{23}'$ differs from $\A_{23}$ by a single trivial arc, the boundary of a small neighborhood in $E_{23}$ of the point $p = \Delta \cap \rho(\Sigma)$, and thus $(B_{23},\A_{23}')$ is a trivial tangle.  Considering $\A_{12}'$, we note that the arc $\delta_{12} \subset E_{12}$ meets $\A_{12}$ in a single point, and as such there is a bridge disk $\Delta_{12}$ for $\A_{12}$ which contains $\delta_{12}$.  In addition, $\A_{12}'$ results from doing surgery on $\A_{12}$ along $\delta_{12}$, which splits $\Delta_{12}$ into two bridge disks and leave all other bridge disks for $\A_{12}$ intact.  It follows that $(B_{12},\A_{12}')$ is a trivial tangle, and a parallel argument shows that $(B_{31},\A_{31}')$ is trivial as well.  Finally, we have $|\K' \cap \Sigma| = |\K \cap \Sigma| + 2$, completing the proof.
\end{proof}

We say the $(b+1)$--bridge trisection $\T'$ is \emph{stabilized}.  Since this construction is not symmetric in the indices $i,j,k$, when necessary we will say that $\T'$ is an \emph{elementary stabilization} of $\T$ \emph{toward} $B_{23}$.  We call any bridge trisection which is the result of some number of elementary stabilizations a \emph{stabilization} of $\T$.  Note that stabilization depends heavily on the choice of the stabilizing disk $\Delta$.

Observe that the stabilization process described in Lemma \ref{stabilized} creates a new bridge disk $\Delta' \subset E_{23}$ for the new arc in $\A_{23}'$.  This disk has the property that $\Delta' \cap \Delta = \{p\}$, and if we isotope $\K'$ along $\Delta'$ (i.e. perform a boundary-compression of $\K' \cap \nu(Z_1)$ along $\Delta'$), we recover our original surface $\K$ and original $b$--bridge trisection of $\K$.  For this reason, we will call $\Delta'$ a \emph{destabilizing disk}. 

Destabilizing disks play a role in the next lemma, which characterizes stabilization in terms of collections of shadow arcs (see Figure \ref{fig:shadowstab}).

\begin{figure}[h!]
  \centering
    \includegraphics[width=.4\textwidth]{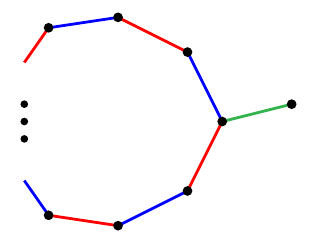}
    \caption{The arrangement of shadow arcs necessary and sufficient for a bridge trisection to be stabilized, as shown in Lemma \ref{shadowstabilized}.}
    \label{fig:shadowstab}
\end{figure}

\begin{lemma}\label{shadowstabilized}
A $(b+1)$--bridge trisection $\T'$ (with components labeled as above) is stabilized if and only if there exist
\be
\item sequences of shadows $a^*_1,\dots,a^*_n$ for arcs in $\A_{ij}'$ and  $b^*_1,\dots,b^*_n$ for arcs in $\A_{ki}'$ such that $a^*_1 \cup b^*_1 \cup \dots \cup a^*_n \cup b^*_n$ is a simple closed curve in $\Sigma$, and
\item a shadow $c^*$ for $\A_{jk}'$ that meets $a^*_1 \cup \dots \cup b^*_n$ in a single point which is one of its endpoints.  
\ee
\end{lemma}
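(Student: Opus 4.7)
The plan is to prove both implications by extracting a destabilizing disk $\Delta'\subset B_{jk}$ and constructing a stabilizing disk $\Delta\subset Z_i$ from the available data.

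For the forward direction, I fix a stabilization realized by a stabilizing disk $\Delta\subset Z_i$ toward $B_{jk}$, and take $\Delta'\subset B_{jk}$ to be the destabilizing disk noted immediately after Lemma~\ref{stabilized}. Setting $c^*=\Delta'\cap\Sigma$ gives the shadow required by (2). For (1), I consider the ``small'' disk $D$ in $\D_i'$ produced by boundary-compressing a component of $\D_i$ along $\Delta$; its boundary $\pd D$ alternates arcs $a_1,\ldots,a_n\in\A_{ij}'$ and $b_1,\ldots,b_n\in\A_{ki}'$, where $n$ depends on how $\delta_1\subset D_1$ is arranged. Triviality of $D$ lets me isotope it rel boundary so that $D$ is assembled from bridge disks for these arcs meeting $\Sigma$ in a single simple closed curve. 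The intersection $D\cap\Sigma$ then decomposes as $\gamma=a_1^*\cup b_1^*\cup\cdots\cup b_n^*$ for shadows $a_k^*,b_k^*$. The relation $\Delta'\cap\Delta=\{p\}$ forces $c^*$ to meet $\gamma$ at exactly one endpoint (the one lying on the $D$-side of $\Sigma$), with the other endpoint of $c^*$ lying in the interior of the component of $\Sigma\setminus\gamma$ disjoint from $D$.

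For the reverse direction, given shadows satisfying (1) and (2), I extract bridge disks $\Delta_{a_k}\subset B_{ij}$, $\Delta_{b_k}\subset B_{ki}$, and $\Delta_c\subset B_{jk}$ realizing them. Because $\gamma$ is simple, I may choose the $\Delta_{a_k}$ and $\Delta_{b_k}$ pairwise disjoint away from their shared shadow endpoints, so that $D^*:=\bigcup_k\Delta_{a_k}\cup\bigcup_k\Delta_{b_k}\subset\pd X_i=B_{ij}\cup_\Sigma B_{ki}$ is an embedded disk whose boundary $J$ is a component of $L_i'=\pd\D_i'$. Let $D\subset\D_i'$ be the disk component with $\pd D=J$, and let $D_\gamma^+$ be the disk in $\Sigma$ bounded by $\gamma$ and containing the endpoint $p^+$ of $c^*$ not on $\gamma$; then $D^*\cup D_\gamma^+$ is an embedded $2$--sphere in $\pd X_i\cong S^3$. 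By Proposition~\ref{prop:trivialdisks}, I can isotope $D$ rel boundary into one of the two $3$--balls cobounded by this sphere. I then claim $\Delta_c$ is a destabilizing disk: pushing $c$ across $\Delta_c$ absorbs $c$ into $c^*\subset\Sigma$, which merges the pairs of arcs of $\A_{ij}'$ and $\A_{ki}'$ incident to $\{p^+,p^-\}$ into single arcs, and merges $D$ with its companion disk in $\D_i'$ into a single trivial disk. The result is a $b$--bridge trisection $\T$, and the stabilizing disk $\Delta\subset Z_i$ realizing $\T'$ as a stabilization of $\T$ is constructed by pushing $D_\gamma^+$ slightly into $Z_i$, joining it to (the isotoped) $D$ by an arc $\delta_1\subset D$ and to the tri-plane by arcs $\delta_{ij}\subset E_{ij}$, $\delta_{ki}\subset E_{ki}$ dual to the sub-arc of $c^*$ lying in $D_\gamma^+$.

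The main obstacle is the reverse direction, specifically verifying that the proposed un-stabilization yields a bona fide bridge trisection and that re-stabilizing via $\Delta$ recovers $\T'$ up to equivalence. The technical crux is isotoping $D$ into the $3$--ball bounded by $D^*\cup D_\gamma^+$; this rests on Proposition~\ref{prop:trivialdisks} together with the fact that embeddedness of $D^*$ (ensured by the simplicity of $\gamma$) forces $D^*\cup D_\gamma^+$ to be an embedded $2$--sphere in $\pd X_i$. Once $D$ is so positioned, the geometric picture from Figures~\ref{fig:StabDisk} and~\ref{fig:StabDisk2} applies in reverse and simultaneously yields the destabilization and the stabilizing disk $\Delta$.
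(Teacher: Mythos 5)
Your overall architecture matches the paper's: in the forward direction you read the simple closed curve of shadows off the small disk created by boundary-compressing along the stabilizing disk and take $c^*$ from the destabilizing disk, and in the reverse direction you boundary-compress along the bridge disk for $c^*$ and try to exhibit a stabilizing disk for the result. The forward half is essentially correct (modulo the paper's more careful bookkeeping of the pinched wedge of circles that gets split at $p$).

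The reverse direction, however, has a genuine gap centered on the object $D^*\cup D_\gamma^+$. The union $\bigcup_k\Delta_{a_k}\cup\bigcup_k\Delta_{b_k}$ is not an embedded disk with boundary $J$: each bridge disk $\Delta_{a_k}$ has boundary $a_k\cup a_k^*$, so the union is a necklace of disks meeting at the $2n$ bridge points whose frontier is $J\cup\gamma$, and adjoining $D_\gamma^+$ yields a pinched disk bounded by $J$, not a $2$--sphere. There is therefore no Sch\"onflies-type $3$--ball into which to isotope $D$, and Proposition \ref{prop:trivialdisks} cannot serve as the engine in the way you invoke it: it compares two trivial disk systems in the $4$--ball with \emph{the same} boundary, so to apply it one must first produce a trivial disk system for $\pd\D_i'$ whose relevant components sit where you want them --- which is precisely the point at issue. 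The input actually needed is $3$--dimensional: Proposition \ref{linkbridge} (the bridge splitting of the unlink $\pd\D_i'$ by $\Sigma$ is standard) together with a cut-and-paste argument arranging that the disk of $\Sigma\setminus\gamma$ on the side \emph{opposite} $p^+$ contains no points of $\K'\cap\Sigma$; note that your chosen disk $D_\gamma^+$ contains the bridge point $p^+$ in its interior, so it cannot be the disk to which $D$ is parallel. Finally, the companion component $D'$ of $\D_i'$ containing $p^+$ must itself be shown to be parallel to a disk in $\Sigma$ disjoint from the first, so that compressing along $\Delta_c$ merges two $\Sigma$--parallel disks into a single trivial disk and the three tangle/unlink conditions survive; you assert this merger but supply no argument, and establishing it (again via Proposition \ref{linkbridge} and cut-and-paste) is the substantive content of this implication.
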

\begin{proof}
Suppose first without loss of generality that $\T'$ is an elementary stabilization of another bridge trisection $\T$ in $X_1$.  Arranging $\T$ as above, there is a disk $\Delta$ embedded in $Z_1$ with the property that $\Delta \cap \Sigma$ is a single point $p$.  To obtain a collection of bridge disks for $\A_{12}'$ and $\A_{31}'$ in $\T'$, we surger bridge disks for $\A_{12}$ and $\A_{13}$ along the arcs $\delta_{12}$ and $\delta_{31}$ in $\partial \Delta$.  Let $D$ denote the component of $D_1$ which meets $\Delta$.  Since $D$ is isotopic into $E_{12} \cup E_{31}$, we may find a collection of shadows for the arcs in $D$ whose union is a simple closed curve in $\Sigma$.  This is not quite the collection of shadows we will need to perform the surgery; let $a_1,b_1,\dots,a_m,b_m$ be a collection of shadows such that $a_l$ is a shadow for $\A_{12}$, $b_l$ is a shadow for $\A_{31}$, and $a_1 \cup b_1 \cup \dots a_m \cup b_m$ is the wedge of two circles, a simple closed curve pinched along the point $p$ in the interior of arcs $a_r$ and $b_s$. 

In this setting, we may view the construction of $\T'$ as splitting the point $p$ into two points, $p_1$ and $p_2$.  This splits the shadow $a_r$ into two arcs $a_r'$ and $a_r''$ and splits $b_s$ into $b_s'$ and $b_s''$, where each of these new arcs is a shadow for $\A_{12}'$ or $\A_{31}'$.  Moreover, this stabilization process creates a new trivial arc in $\A_{23}'$, which has a shadow $c^*$ connecting $p_1$ to $p_2$ and avoiding the arcs $a_l$ and $b_l$.  Since this process also splits the wedge of two circles $a_1 \cup b_1 \cup \dots \cup a_m \cup b_m$ into two disjoint curves, we conclude that there are sequences of shadow arcs $a^*_1,\dots, a^*_n$ and $b^*_1,\dots,b^*_n$ whose union is a simple closed curve meeting $c^*$ in a single point of $\K \cap \Sigma$. In fact, the proof reveals that there are two such sequences.  See Figure \ref{fig:shadowstab2}.

\begin{figure}[h!]
  \centering
    \includegraphics[width=1\textwidth]{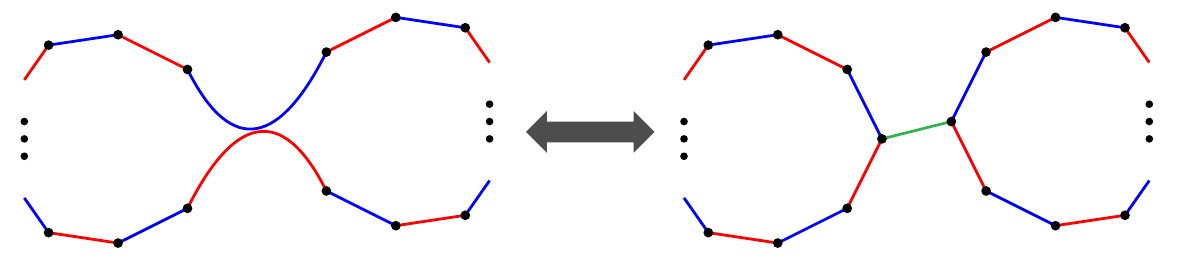}
    \caption{A simple closed curve of shadow arcs (left) which is pinched into two curves and a new arc $c^*$ during the stabilization process (right).}
    \label{fig:shadowstab2}
\end{figure}

For the reverse direction, suppose that there are sequences of shadows $a_1^*,\dots,a_n^*$ for $\A_{12}'$ and $b_1^*,\dots,b_n^*$ for $\A_{31}'$ whose union is a curve in $\Sigma$ which meets a shadow $c^*$ for $\A_{23}'$ in a single endpoint.  Then there is a component $D$ of $\D_1'$ which is isotopic to a disk $D^*$ in $\Sigma$ bounded by $a_1^* \cup b_1^* \cup \dots \cup a_n^* \cup b_n^*$.  After a standard cut-and-paste argument (see the proof of Theorem \ref{thm:existence}), we may assume that the interior of $D^*$ contains no point of $\K' \cap \Sigma$.  By Proposition \ref{linkbridge}, the splitting $(\pd X_1,\pd \D_1) = (B_{12},\A_{12}') \cup_{\Sigma} (B_{31},\A_{31}')$ is standard.  By assumption, the endpoint of the arc $c^*$ that is not contained in $D^*$ must be contained in another component $D'$ of $\D_1$, and again using cut-and-paste techniques, we see that $D'$ is isotopic to a disk $(D')^*$ contained in $\Sigma$ such that $(D')^* \cap D^* =\emp$ and the interior of $(D')^*$ contains no points of $\K' \cap \Sigma$. 

Finally, we may arrange $Y \cong S^3 \subset S^4$ and a tri-plane $E_{12} \cup E_{31} \cup E_{23}$ so that, after pushing $D^*$ and $(D'^*)$ into $Z_1$, $\pd D^* \cup \pd (D')^* \subset E_{12} \cup E_{31}$, and the shadow $c^*$ arises from a bridge disk $\Delta'$ for an arc $c' \in \A_{23}'$ which is a slight pushoff of $c^*$ into $E_{23}$ contained in $\nu(Z_1)$.  Boundary-compressing $\K' \cap \nu(Z_1)$ along $\Delta'$ into $Z_1$ merges $D^* \cup (D')^*$ into a single disk $D''$ and gives rise to a new boundary compressing disk $\Delta$ which satisfies the conditions above.  We leave it to the reader to check the details that $\Delta'$ is a destabilizing disk and that this is precisely the inverse operation of stabilization. The result is a $b$--bridge trisection $\T$ such that stabilizing $\T$ along the disk $\Delta$ again yields $\T'$.
\end{proof}

We call the operation described in Lemma \ref{shadowstabilized} \emph{destabilization}.  Succinctly, if $\T'$ is a $(b+1)$--trisection with shadows satisfying the conditions of Lemma \ref{shadowstabilized}, destabilization is the process of boundary-compressing $\K'$ along a bridge disk giving rise to the shadow $c^*$, yielding a $b$--bridge trisection.  By Lemma \ref{shadowstabilized}, we see that stabilizations may be quantitatively different, depending on the cardinality $n$ of the sequences $a_1^*,\dots,a_n^*$ and $b_1^*,\dots,b_n^*$ of shadow arcs.  To emphasize the value of $n$, we will sometimes say that a stabilized bridge trisection $\T$ is \emph{$n$--stabilized}. 

We may also depict stabilization and destabilization on the level of tri-plane diagrams.  To $n$--stabilize a tri-plane diagram $\Pau$, suppose that $D_1$ is a component of $\D_1$ such that $D_1 \cap (E_{12} \cup E_{31})$ is the standard diagram pictured in Figure \ref{fig:b1Diag}, bounding a disk $D$ in $E_{12} \cup E_{31}$.  Let $\Delta$ be a stabilizing disk, observing that $\Delta$ cuts $D$ into two components $D'$ and $D''$.  By the proof of Lemma \ref{shadowstabilized}, we need only consider one of the components, since the existence of one implies the existence of the other.  We may construct a new tri-plane diagram $\Pau'$ for the stabilized bridge trisection $\T'$ by surgering $\Pau_{12}$ along the arc $\delta_{12}$, surgering $\Pau_{31}$ along the arc $\delta_{31}$, and adding the boundary arc $(\partial\nu(p)) \cap E_{23}$ to $\Pau_{23}$.  If $D'$ contains $2n$ points of $\K \cap \Sigma$, then $\Pau'$ is $n$--stabilized.  See Figure \ref{diagramstable}.  Note that a stabilized bridge trisection is both $n'$--stabilized and $n''$--stabilized, for parameters $n'$ and $n''$ coming from each component $D'$ and $D''$ of the disk $D$ cut along $\Delta$. 

\begin{figure}[h!]
  \centering
    \includegraphics[width=.9\textwidth]{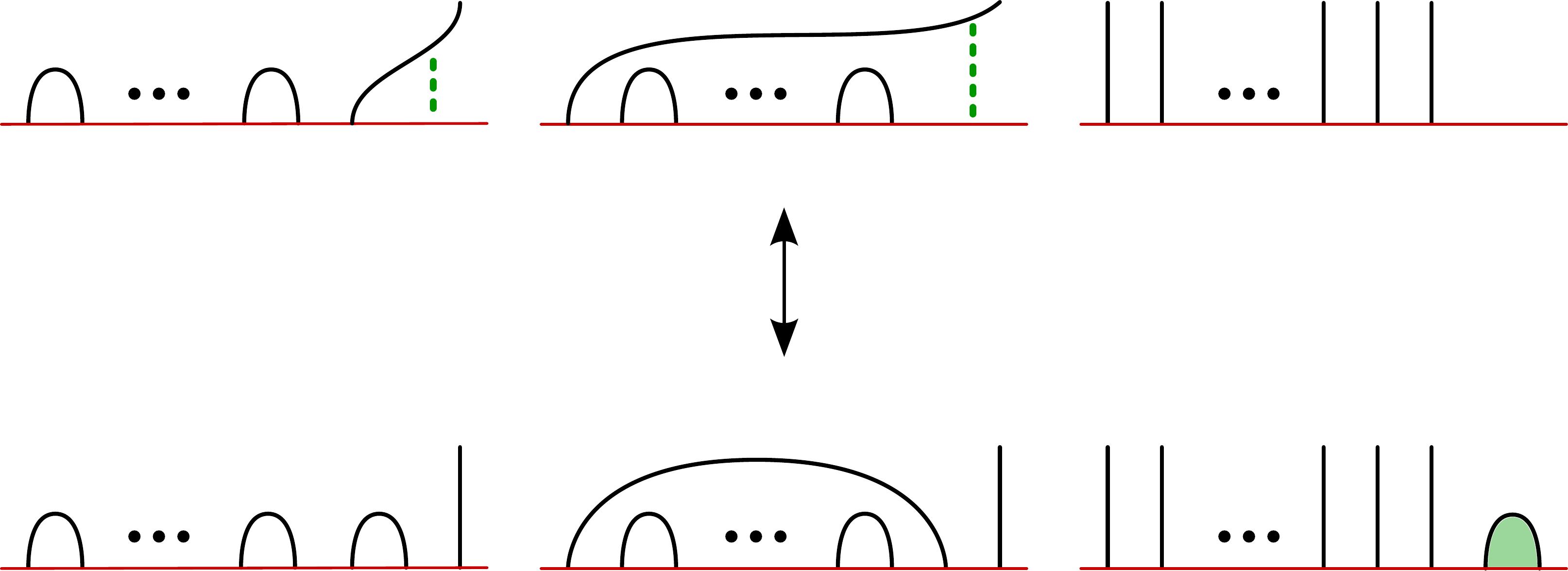}
    \caption{A general depiction of the stabilization and destabilization operation.  Distinct operations result from varying the number of arcs represented by the dots.}
    \label{diagramstable}
\end{figure}

In the reverse direction, we also see how to destabilize a tri-plane diagram:  Suppose that a bridge trisection $\T'$ has a diagram $\Pau' = (\Pau_{12}',\Pau_{23}',\Pau_{31}')$ with the property that $\Pau_{12}' \cup \Pau_{31}'$ contains a standard component as in Figure \ref{fig:b1Diag}, giving rise to a sequence of shadows $a_1^*,b_1^*,\dots,b_{n-1}^*,a_n^*$ whose union is an embedded arc in $e = \pd E_{ij}$, and $\Pau_{31}'$ contains a crossing-less arc with shadow $c^*$ which meets $a_1^* \cup b_1^* \cup \dots \cup b_{n-1}^* \cup a_n^*$ in a single endpoint of $a_n^*$.  Then there is an arc in $\Pau_{31}$ with shadow $b_n^*$ such that $a_1^* \cup b_1^* \cup \dots \cup a_n^* \cup b_n^*$ is an embedded curve in $\Sigma$ meeting $c^*$ in a single endpoint, and we see that $\T'$ is $n$--stabilized by Lemma \ref{shadowstabilized}.  Further, the bridge disk yielding $c^*$ in $\Pau_{23}'$ is a destabilizing disk, and we may destabilize the diagram $\Pau'$ by pushing this arc through $\Sigma$ as shown in Figure \ref{diagramstable}. 

As an example, the two-bridge diagram pictured in Figure \ref{fig:2Bridge} is 1--stabilized, and destabilizing yields the diagram in Figure \ref{fig:1Bridge}. A more general 1--stabilization is shown in Figure \ref{fig:4Stab}. For an interesting example, consider the 4--bridge trisection $\T_4$ of the unknot shown in the bottom half of in Figure \ref{fig:42Stab}.  Note that if a bridge trisection $\T$ is $n$--stabilized, then a component of some $L_i$ contains $n$--bridges.  For the bridge trisection $\T_4$, each component of $L_1$, $L_2$, and $L_3$ is in two-bridge position; therefore, $\T_4$ cannot be 1--stabilized.  However, the diagram \emph{is} 2--stabilized, and destabilizing yields the 3--bridge diagram at top of Figure \ref{fig:42Stab}.  This particular stabilization corresponds with the operation shown in Figures \ref{fig:StabDisk} and \ref{fig:StabDisk2}.

\begin{figure}[h!]
  \centering
    \includegraphics[width=.9\textwidth]{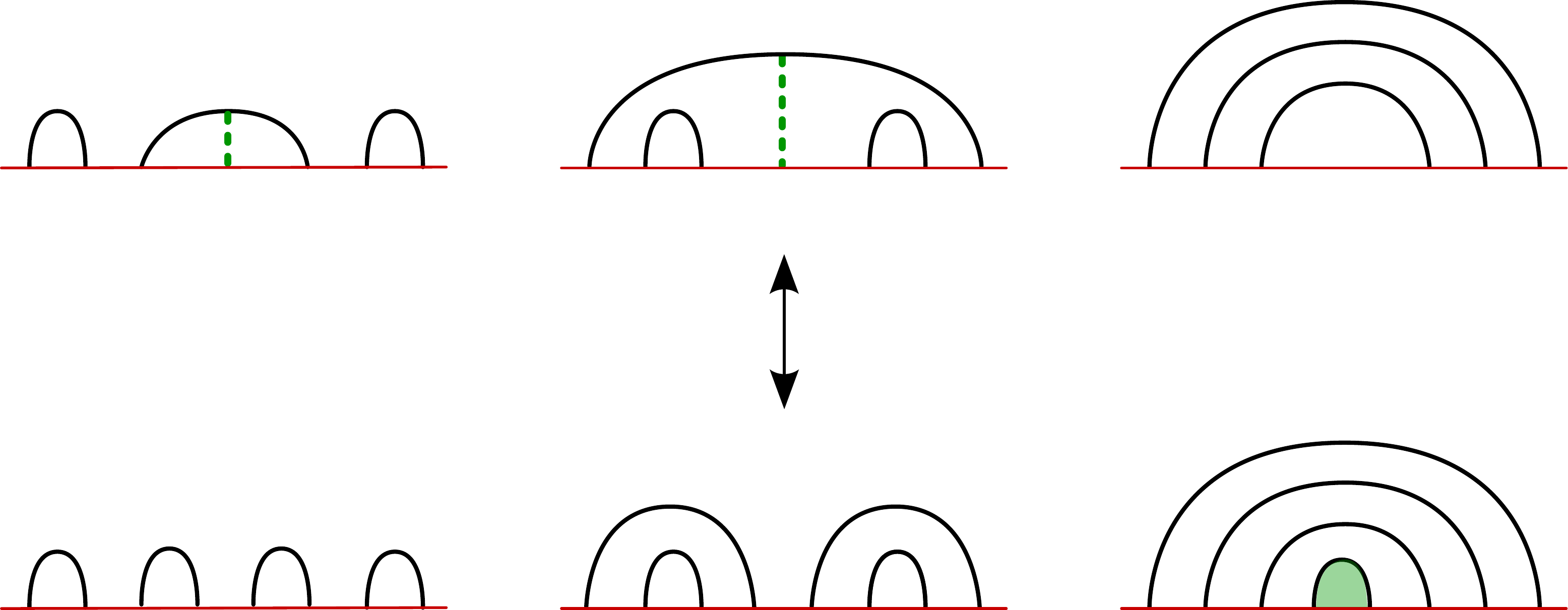}
    \caption{An example of a 2--stabilization.  The bottom diagram is a $(4,2)$--tri-plane diagram for the unknotted 2--sphere and is not 1--stabilized.}
    \label{fig:42Stab}
\end{figure}

\subsection{Stabilization of banded bridge splittings}\ 

Suppose that $\BB$ is a banded $b$--bridge splitting given by $(S^3,L,\ups) = (B_{12},\A_{12}) \cup_{\Sigma,y^*} (B_{31},\A_{31})$, and consider a point $p \in L \cap \Sigma$.  We may perturb $\Sigma$ at $p$ to obtain a new banded $(b+1)$--bridge splitting $\BB'$ given by $(S^3,L,\ups) = (B_{12}',\A_{12}') \cup_{\Sigma',y'} (B_{31}',\A_{31}')$.  We call $\BB'$ an \emph{elementary stabilization} of $\BB$, and we call any banded bridge splitting which is the result of some number of elementary stabilizations a \emph{stabilization} or \emph{stabilized}. 

If the point $p \in L \cap \Sigma$ at which the perturbation is carried out is not an endpoint of an arc in $y^*$, then up to isotopy, elementary stabilization is unique.  However, if $p$ is an endpoint of an arc $y_p \in y^*$, then there are two distinct ways in which we may construct $\BB'$:  Let $y_p'$ denote the arc in $y'$ corresponding to $y_p$, and let $a_{12}' \in \A_{12}'$ and $a_{31}' \in \A_{31}'$ be the canceling pair of arcs created by perturbation.  Then $y_p'$ shares an endpoint with either $a_{12}'$ or $a_{31}'$.  If $y_p'$ shares an endpoint with $a_{12}'$, we say $\BB'$ is stabilized \emph{toward $B_{31}'$}.  On the other hand, if $y_p'$ shares an endpoint with $a_{31}'$, we say $\BB'$ is stabilized \emph{toward $B_{12}'$.}  See Figure \ref{estab}. 

\begin{figure}[h!]
  \centering
    \includegraphics[width=.7\textwidth]{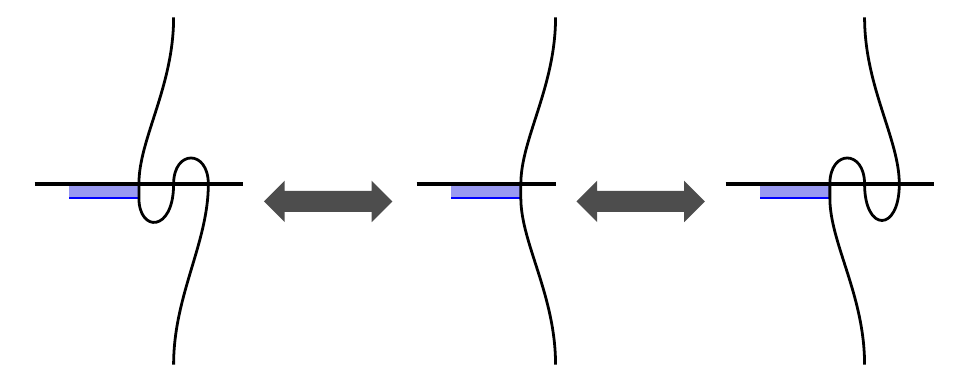}
    \caption{The two ways to perturb the banded link $L$ near an endpoint of $y^*$.  At left, a stabilization toward $B_{31}'$, and at right, a stabilization toward $B_{12}'$.}
    \label{estab}
\end{figure}

At this point, we have defined stabilization for both banded bridge splittings and bridge trisections; hence, now we must show that our definitions are equivalent via the correspondences between these objects introduced in Section \ref{sec:exist}.

\begin{proposition}\label{stables}
Suppose that $\T$ is a bridge trisection of $(S^4,\K)$.  Then $\T$ is stabilized if and only if there is a banded link $(L,\ups)$ with a stabilized banded bridge splitting $\BB$ such that $\T = \T(\BB)$.
\end{proposition}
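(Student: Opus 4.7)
The proof strategy is to establish a three-to-three correspondence between the types of elementary perturbations of a banded bridge splitting and the types of elementary stabilizations of a bridge trisection, then proceed by induction on the number of such operations in each direction.

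For the $(\Leftarrow)$ direction, by induction it suffices to handle a single elementary perturbation of some $\BB_0$ at a bridge point $p \in L \cap \Sigma$. Consider first the case $p \notin y_0^*$. The perturbation introduces two new arcs, $a \in \A_{12}$ (a bump into $B_{12}$) and $b \in \A_{31}$ (a bump into $B_{31}$), meeting at a common bridge point; since $a$ is disjoint from the bands, $a$ also appears as a new arc in $(\A_{12})_{\ups}$. Translating through Lemma~\ref{lemma:morsetri}, these become new arcs $a' \in \A_{12}'$, $b' \in \A_{31}'$, and $c' \in \A_{23}'$ of $\T(\BB)$, all attached at the new bridge points on the trisection sphere. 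Choose shadows $a^*, b^*, c^*$ so that $c^*$ and $a^*$ share both endpoints and together form a bigon in $\Sigma$, while $b^*$ meets this bigon at exactly one of its vertices. Then Lemma~\ref{shadowstabilized} (applied with $(i,j,k) = (2,3,1)$, identifying the sequence of $\A_{23}$-shadows with $\{c^*\}$, the sequence of $\A_{12}$-shadows with $\{a^*\}$, and the stem with $b^*$) certifies that $\T(\BB)$ is a $1$-stabilization of $\T_0 = \T(\BB_0)$. The case $p \in y_0^*$ is handled by analogous bookkeeping, yielding a $1$-stabilization whose stabilizing disk lies in $X_1$ or $X_3$ depending on whether the perturbation is toward $B_{31}$ or $B_{12}$.

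For the $(\Rightarrow)$ direction, suppose $\T$ is an elementary stabilization of $\T_0$ along a stabilizing disk $\Delta \subset Z_i$. Apply Lemma~\ref{lemma:trimorse} to $\T_0$ to obtain a banded bridge splitting $\BB_0$ with $\T(\BB_0) = \T_0$. The construction in that lemma involves choosing shadow arcs $y_0^*$ from among many pairwise non-isotopic candidates, and this freedom allows us to arrange the configuration of $\BB_0$ relative to the corner point $p = \Delta \cap \Sigma$. Depending on $i$, we select the shadows so that $p$ either lies between bridge points of $L$ (for $i = 2$) or near an endpoint of an arc in $y_0^*$ (for $i \in \{1,3\}$). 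We then perform an elementary perturbation of $\BB_0$ at the corresponding point in the appropriate direction, obtaining $\BB$. By the $(\Leftarrow)$ analysis, $\T(\BB)$ is an elementary stabilization of $\T_0$ of the same type as $\T$, and by unpacking the construction via Lemma~\ref{lemma:morsetri} together with the uniqueness in Lemma~\ref{spine}, we verify that $\T(\BB)$ is equivalent to $\T$.

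The main obstacle is pinning down the three-to-three correspondence precisely, especially for perturbations at endpoints of $y^*$, where the interaction with band resolution is delicate: the two directions (toward $B_{12}$ versus toward $B_{31}$) produce new arcs in $(\A_{12})_{\ups}$ that attach to the surrounding band structure in different ways, and one must carefully trace which of the three tangles $\A_{ij}$ receives the ``stem'' arc in Lemma~\ref{shadowstabilized}. A secondary subtlety in the $(\Rightarrow)$ direction is matching the constructed $\T(\BB)$ with the given $\T$ up to equivalence; this relies on choosing $\BB_0$ so that the stabilizing data (shadow arcs, bridge disks, and band endpoints) aligns with the induced structure coming from the perturbation.
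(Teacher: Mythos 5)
Your overall plan (a case-by-case correspondence between elementary perturbations of $\BB$ and stabilizations of $\T(\BB)$, run in both directions) has the right shape, and your first case of the $(\Leftarrow)$ direction matches the paper's. But the case you defer to ``analogous bookkeeping'' --- a perturbation at a point of $\pd y^*$ --- is precisely where the correspondence fails to behave as you assert. If the perturbation is made so that the new arc $a_{12}\in\A_{12}$ has its shadow $a_1^*$ adjacent to a band arc $y_1^*$, then $a_{12}$ does \emph{not} survive as an arc of $\A_{23}=(\A_{12})_\ups$: it is absorbed into the chain $C=a_1^*\cup y_1^*\cup\dots\cup y_{n-1}^*\cup a_n^*$, and the closed curve certifying Lemma \ref{shadowstabilized} is a pushoff of all of $C$, not a bigon. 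The resulting trisection is $n$--stabilized for the $n$ determined by that chain, and the configuration you describe (a bigon $a^*\cup c^*$ together with a stem $b^*$) does not exist in that case. The conclusion ``$\T(\BB)$ is stabilized'' still holds, but the argument for the hardest subcase is missing.

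The $(\Rightarrow)$ direction has the more serious gap. You apply Lemma \ref{lemma:trimorse} to the destabilized trisection $\T_0$ and then perturb, claiming the result is ``an elementary stabilization of $\T_0$ of the same type as $\T$'' and hence equivalent to $\T$. But elementary stabilizations are not classified by which $Z_i$ contains the stabilizing disk plus a direction: the paper notes explicitly that stabilization depends heavily on the choice of stabilizing disk, and that two elementary stabilizations of the same trisection need not be equivalent. Lemma \ref{spine} gives you nothing until you have shown the two spines are isotopic, and for an $n$--stabilization with $n>1$ you must additionally arrange that your single perturbation of $\BB_0$ sits at the end of a chain of exactly $n$ band arcs matching the shadow data $a_1^*,\dots,b_n^*,c^*$ of $\T$. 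The paper sidesteps this matching problem by running the construction of Lemma \ref{lemma:trimorse} on $\T$ itself, choosing the shadow collections $\A_{12}^*,\A_{23}^*$ to contain the stabilizing shadows and deleting $b_n^*$ (among others) from $\A_{23}^*$ to form $y^*$, so that $\T(\BB)=\T$ holds by construction and the perturbation at $a_1^*\cap c^*$ is visibly removable; the only remaining check is that unperturbing preserves the banded-bridge-splitting conditions. You would need either to adopt that route or to supply the explicit isotopy matching your $\T(\BB)$ to the given $\T$.
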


\begin{proof}
Suppose first that there is a banded link $(L,\ups)$ with a stabilized banded bridge splitting $\BB$ such that $\T = \T(\BB)$, and let $\BB$ be given by $(S^3,L,\ups) = (B_{12},\A_{12}) \cup_{\Sigma,y^*} (B_{31},\A_{31})$.  Suppose that $\BB$ is stabilized at the point $p \in L \cap \Sigma$, which is an endpoint of canceling arcs $a_{12} \in \A_{12}$ and $a_{31} \in \A_{31}$ with shadows $a_1^*$ and $c^*$, respectively, which meet in a single point (namely, $p$).  By the definition of stabilization, we may assume that $a_1^*$ is contained in the collection $\A_{12}^*$ of shadows dual to $y^*$ and that $\text{int}(c^*) \cap \A_{12}^* = \emp$.  There are three cases to consider:  First, suppose that none of the endpoints of $a_{12}$ nor $a_{31}$ is the endpoint of an arc in $y^*$.  By the proof of Lemma \ref{lemma:trimorse}, we have $\A_{23}$ is isotopic rel boundary to $(\A_{12})_{\ups}$ (after pushing $\ups$ into $B_{12}$).  Thus, if $a_{12}$ does not meet a band in $\ups$, we have $a_{12}$ is in $\A_{23}$ as well, and so $a_1^*$ is also a shadow for $\A_{23}$.  By letting $b_1^*$ be a slight pushoff of $a_1^*$, we see that $\T$ is 1--stabilized. 

The second case is similar: Suppose that there is an arc $y^*_p$ which shares an endpoint with $c^*$.  Then no arc of $y^*$ meets $a_{12}$, so once again $a_{12} \in \A_{23}$ and we can see that $\T$ is 1-stabilized. 

Finally, suppose that an arc $y^*_p$ shares an endpoint with $a^*_1$, and let $y^*_1 = y^*_p$.  Since the shadows $\A_{12}^*$ are dual to $y^*$, we have $a_1^*$ and $y_1^*$ are contained in an embedded arc component $C$ of $\A_{12}^* \cup y^*$.  Moreover, since $\BB$ is stabilized, the other endpoint of $a_{12}$ cannot meet $y^*$, and thus we may describe $C$ as $a_1^* \cup y_1^* \cup \dots \cup y_{n-1}^* \cup a_n^*$, where arcs occur in order of adjacency.  By the proof of Lemma \ref{lemma:morsetri}, if $y^*_n$ is a slight pushoff of $C$ away from its endpoints, then $y^*_n$ is a shadow for an arc of $\A_{23}$.  Finally, since $a_1^* \cup c^*$ meets at most one arc in $y^*$ and $\text{int}(c^*) \cap \A_{23}^* = \emp$, it follows that $c^*$ intersects the simple closed curve $C \cup y_n^*$ in a single point, its endpoint $p$, and we conclude that $\T$ is $n$--stabilized, as desired.


For the reverse implication, suppose that $\T$ is $n$--stabilized for some $n$, so that there are three collections of bridge disks for $(B_{ij},\A_{ij})$, subsets of which meet $\Sigma$ in arcs $\{a_l^*\}$, $\{b_l^*\}$, and $c^*$ such that $a_1^* \cup \dots \cup b_n^*$ is a simple closed curve meeting $c^*$ in a single endpoint.  We suppose further that this endpoint is $a_1^* \cap b_n^*$.  Moreover, by Proposition \ref{linkbridge}, the bridge splitting $(S^3,L_2) = (B_{12},\A_{12}) \cup (B_{23},\A_{23})$ is standard, and as such we may choose collections $\Delta_{12}$ and $\Delta_{23}$ of bridge disks for $(B_{12},\A_{12})$ and $(B_{23},\A_{23})$ (possibly after relabeling components of the spine of $\T$) with the following properties:
\be
\item $\{a_l^*\} \subset \A_{12}^*$ and $\{b_l^*\} \subset \A_{23}^*$, where $\A_{ij}^* = \Delta_{ij} \cap \Sigma$,
\item $\A_{12}^* \cup \A_{23}^*$ is a collection of pairwise disjoint embedded closed curves in $\Sigma$, and
\item $(\A_{12}^* \cup \A_{23}^*) \cap \text{int}(c^*) = \emp$.
\ee
Condition (3) is obtained by a standard cut-and-paste argument.

Following the proof of Lemma \ref{lemma:trimorse}, we may construct a banded link $(L,\ups)$ such that $\K = \K(L,\ups)$ by the following process:  Let $C_1,\dots,C_m$ denote the components of $\A_{12}^* \cup \A_{23}^*$ in $\Sigma$.  After relabeling, we may suppose that $C_1 = a_1^* \cup \dots \cup b_n^*$.  In addition, we let $\widehat{b}$ denote the arc in $\A_{23}^*$ which meets an endpoint of $c^*$ but is not in $C_1$ (there is precisely one such arc), and suppose $C_2$ contains $\widehat{b}$.  For each other component $C_l$, fix an arc $\widehat{b}_l \in \A_{23}^*$.  Now, let $y^* = \A_{23}^* \setminus \{b_n^*,\widehat{b},\widehat{b}_3,\dots,\widehat{b}_m\}$, and let $\ups$ be a collection of bands for $L = \A_{12} \cup \A_{31}$ induced by arcs $y^* \subset \Sigma$ with the surface framing.  By the proof of Lemma \ref{lemma:trimorse}, $(L,\ups)$ is a banded link and $\K = \K(L,\ups)$.  Moreover, $(S^3,L,\ups) = (B_{12},\A_{12}) \cup_{\Sigma,y^*} (B_{31},\A_{31})$ is a banded bridge splitting we label $\BB$.

We claim that $\BB$ is stabilized.  First, we note that by our choice of $\A_{12}^*$, the bridge sphere $\Sigma$ is perturbed at the point $a_1^* \cap c^*$, and by our choice of $y^*$ and $\A_{23}^*$, the arc $c^*$ does not meet any of the bands in $y^*$.  It will be useful here to consider the bridge sphere $\Sigma$ as fixed, isotoping the link $L$ and the arcs $y^*$.  By an isotopy of $L$, we can unperturb $\Sigma$ to get a new bridge splitting $(S^3,L) = (B_{12},\A_{12}') \cup_{\Sigma} (B_{31},\A_{31}')$.  If $n = 1$, then no arc of $y^*$ meets $a_1^* \cup c^*$ and $y^*$ remains a collection of surface-framed arcs dual to a set of shadows for $\A_{12}'$; thus, the result is again a banded bridge splitting and $\BB$ is stabilized.

Otherwise, $n > 1$ and the only arc of $y^*$ meeting the arc $a_1^* \cup c^*$ is $b_1^*$, where $b_1^*$ meets this arc at the endpoint of $a_1^*$.  Since $y^*$ does not meet the interior of $a_1^* \cup c^*$, unperturbing $\Sigma$ does not disturb the arcs $y^*$ (as shown in Figure \ref{estab}).  We need only observe that there is a collection of shadows $(\A_{12}')^*$ for $\A_{12}'$ so that $(\A_{12}')^* \cup y^*$ is a collection of embedded arcs in $\Sigma'$.  However, the only difference between this set and the embedded arcs of $\A_{12}^* \cup y^*$ is that arc components of $C_1$ and $C_2$ have been joined at their endpoints along $c^*$.  It follows that $(S^3,L,\ups) = (B_{12},\A_{12}') \cup_{\Sigma,y^*} (B_{31},\A_{31}')$ is a banded bridge splitting, and $\BB$ is again stabilized, completing the proof.
\end{proof}

\section{Stable equivalence of bridge trisections}\label{sec:unique}

In this section, we show that there is a sequence of stabilizations and destabilizations connecting any two bridge trisections of the same knotted surface $\K$ in $S^4$.  For this, we will require several new concepts not yet discussed in this paper.  The first is a notion of bridge splittings for compact 1--manifolds embedded in compact 3--manifolds.  The following definitions are closely related to the material presented in Subsection \ref{sub:bridge3}. 

Define a \emph{punctured 3--sphere} to be any 3--manifold obtained from $S^3$ by removing some number of disjoint open 3--balls.  Suppose that $\Sigma$ is a 2--sphere, and define a \emph{compression body} $C$ to a be a product neighborhood $\Sigma \X [0,1]$ of $\Sigma$ with a collection of 2--handles attached to $\Sigma \X \{0\}$.  Note that $C$ is a punctured 3--sphere.  (As an aside, we also note that there is a more general definition of a compression body, but this one will suffice for our purposes.)  Let $\pd_+ C$ denote $\Sigma \X \{1\} \subset \pd C$ and $\pd_- C = \pd C \setminus \pd_+ C$.  We say that a properly embedded arc in $C$ is \emph{$\pd_+$--parallel} if it is isotopic into $\pd_+C$ and \emph{vertical} if is it isotopic to $\{x\} \X [0,1]$ for a point $x \in \Sigma$.  An arc is \emph{trivial} if it is vertical or $\pd_+$--parallel.

We call a properly embedded 1--manifold $T$ in a punctured 3--sphere $B$ a \emph{tangle}.  A \emph{bridge splitting} for a tangle $(B,T)$ is defined as the decomposition
\[ (B,T) = (B_1,\A_1) \cup_{\Sigma} (B_2,\A_2),\]
where $\A_i$ is a collection of trivial arcs in the compression body $B_i$, and $\Sigma = \pd_+ B_1 = \pd_+ B_2$.  We say that $\Sigma$ is a \emph{bridge sphere} for $(B,T)$.  As above, an \emph{elementary perturbation} $\Sigma'$ of $\Sigma$ is obtained by adding a canceling pair of $\pd_+$--parallel arcs to $\A_1$ and $\A_2$, and a surface $\Sigma^*$ which is the result of some number of elementary perturbations performed on $\Sigma$ is called a \emph{perturbation} of $\Sigma$.


In Theorem 2.2 of \cite{zupan:pants}, it is shown that any two bridge splittings for a link $L$ in $S^3$ have a common perturbation.  Although Theorem 2.2 is stated for $B = S^3$, the verbatim proof suffices in the case that $\pd B, \pd T \neq \emp$, and so we do not include it here.  See also \cite{hayashi:stable}.

\begin{theorem}\label{common}\cite{zupan:pants}
Suppose that $\Sigma_1$ and $\Sigma_2$ are bridge splittings for a tangle $T$ in a punctured 3--sphere $B$.  Then there is a surface $\Sigma^*$ which is a perturbation of both surfaces.
\end{theorem}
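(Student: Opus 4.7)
The plan is a Morse-theoretic interpolation in the spirit of Reidemeister--Singer and Hayashi--Shimokawa. Each bridge splitting $\Sigma_i$ is witnessed by a Morse function $h_i\colon (B,T) \to \R$ whose restriction to $B$ has a standard handle structure compatible with $\pd B$ and whose restriction to $T$ places all minima below $\Sigma_i$ and all maxima above, with $\Sigma_i$ a regular level between them. I would then choose a smooth generic one-parameter family $\{h_s\}_{s \in [0,1]}$ with $h_0 = h_1$ and $h_1 = h_2$, holding the data on $\pd B$ fixed throughout.

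By a relative Cerf theory argument applied to the pair $(B,T)$, this family is Morse away from finitely many $s$--values, at which a single birth/death of a canceling pair of critical points of $h_s|_T$, a birth/death for $h_s|_B$, or a handle slide occurs. Each bifurcation corresponds to a controlled move on the level bridge sphere: handle slides and bifurcations of $h_s|_B$ yield isotopies of the bridge sphere that respect the trivial-tangle structure, a birth for $h_s|_T$ crossing the bridge sphere is precisely an elementary perturbation, and a death is an elementary unperturbation. By performing only perturbations (and never unperturbations) as $s$ evolves from $0$ to $1$ -- replacing any death event by the inverse perturbation recorded on the other side of the family -- one obtains a single surface $\Sigma^*$ that is simultaneously a perturbation of $\Sigma_1$ and of $\Sigma_2$.

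The main obstacle is verifying that between bifurcations the level surface genuinely remains a bridge sphere, and that handle slides are realized by ambient isotopies in $(B,T)$ rather than by moves that temporarily destroy triviality of the arcs. The key technical input is triviality of the arcs $\A_k$ in each compression body, together with standard innermost-disk and outermost-arc arguments applied to $\Sigma_1 \cap \Sigma_2$ after placing both surfaces in general position, which produce disjoint collections of bridge disks whose shadows guide the successive perturbations. Because $\pd B$ and $\pd T$ may be nonempty, every disk-surgery must be carried out relative to the boundary; the dichotomy between $\pd_+$--parallel and vertical arcs in a compression body is precisely what makes these surgeries well-defined, and with this in place the closed-case argument of \cite{zupan:pants} goes through verbatim.
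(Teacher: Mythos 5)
First, note that the paper does not actually prove Theorem \ref{common}: it cites Theorem 2.2 of \cite{zupan:pants} and asserts that the proof there goes through verbatim when $\pd B,\pd T\neq\emp$, so there is no in-paper argument to compare against. Judged on its own terms, your Cerf-theoretic sketch has a genuine gap at its central step. A generic one-parameter family $\{h_s\}$ produces, at best, a \emph{zigzag} of elementary perturbations and unperturbations connecting $\Sigma_1$ to $\Sigma_2$; the theorem asserts the strictly stronger conclusion that the two surfaces admit a \emph{common} perturbation. Your sentence about ``performing only perturbations (and never unperturbations) \ldots replacing any death event by the inverse perturbation recorded on the other side of the family'' is not an argument: a death event in the family cannot simply be skipped or reflected, and rearranging a zigzag into the form (perturb several times, then unperturb several times) is exactly the content of the theorem. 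For Heegaard splittings the analogous conversion uses uniqueness of stabilization; for bridge spheres, elementary perturbations are \emph{not} unique (as noted in Subsection \ref{sub:bridge3}, perturbing at different points of $L\cap\Sigma$ can yield inequivalent bridge spheres), so no off-the-shelf reordering is available. Indeed, the remark preceding the restatement of Theorem \ref{thm:uniqueness} in Section \ref{sec:unique} emphasizes precisely this distinction between zigzag equivalence and common stabilization.

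There is a second, smaller gap that you flag but do not close: between bifurcations the level set of $h_s$ need not be a bridge sphere at all, because births/deaths and critical-value crossings can place a maximum of $h_s|_T$ below the chosen level (or an index-two critical point of $h_s|_B$ above it). Handling this requires an ordering argument on critical values along the path, not merely general position. The actual route in \cite{zupan:pants} (following \cite{hayashi:stable}) avoids both problems by working with the two splittings globally---roughly, interpolating through a multiple (thin-position-style) splitting and removing inessential levels---rather than tracking a single level surface through a Cerf family. If you want to salvage the Cerf approach, you would need to supply a lemma converting a perturbation/unperturbation zigzag into a common perturbation, and that lemma is where all the work lives.
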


The other tool we will need in this section is a set of moves which allows us to pass between any two banded link presentations of a knotted surface in $S^4$.  The sufficiency of natural set of moves was conjectured by Yoshikawa in \cite{yoshikawa} and proved by Swenton \cite{swenton} and Kearton-Kurlin \cite{kearton-kurlin}.  The moves are most easily understood by examining Figure \ref{moves}.  We give precise statements of their definitions below.  Although these definitions are cumbersome, they will help to streamline the proof of Theorem \ref{thm:uniqueness}.  For each move, we replace a banded link $(L,\ups)$ with a new banded link $(L',\ups')$.

\begin{figure}[h!]
  \centering
    \includegraphics[width=.46\textwidth]{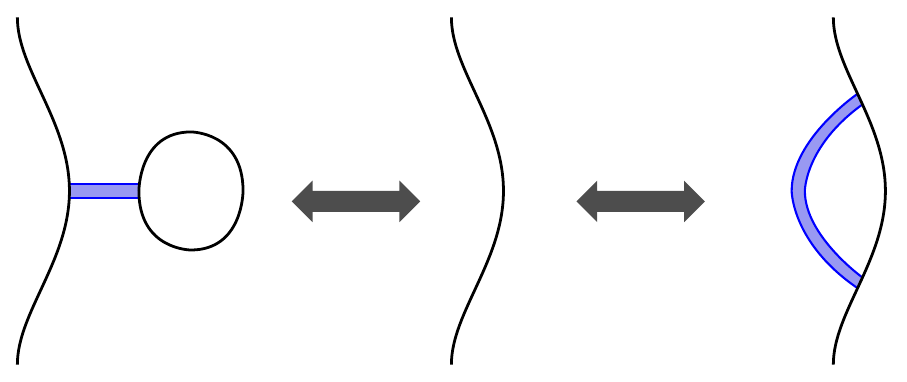} \\   \includegraphics[width=.46\textwidth]{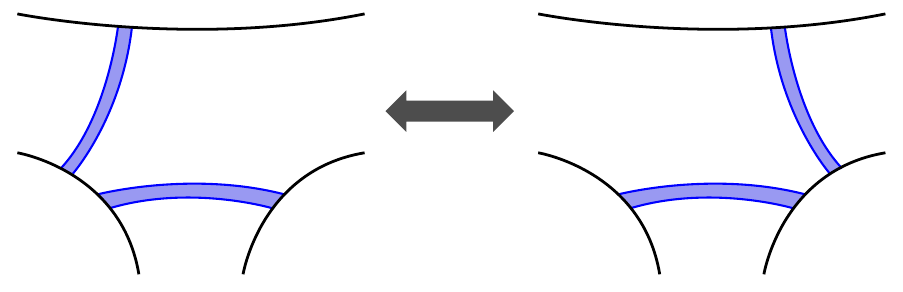}  \,\,\,\,\,\, \,\,\, \includegraphics[width=.46\textwidth]{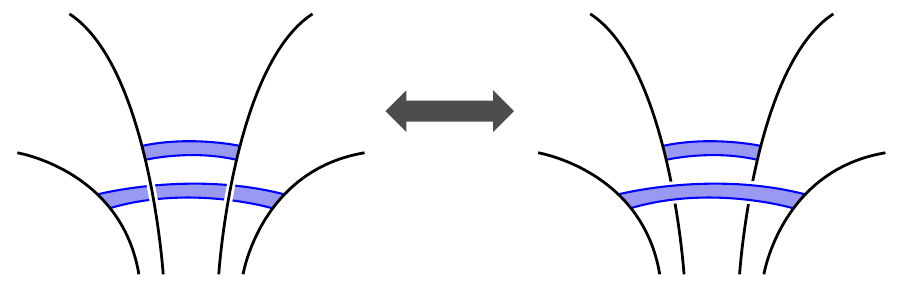}
    \caption{The banded link moves: The cup (top left), cap (top right), band slide (bottom left) and band swim (bottom right).  Note that all of these moves represent isotopies of the corresponding realizing surface $\K(L,\ups)$.}
    \label{moves}
\end{figure}

\bi
\item \emph{Cup}:  Let $L'$ be a split link consisting of $L$ and an unknotted component $J$, and let $\ups'$ be the union of $\ups$ and a trivial band connecting $J$ to $L$. 
\item \emph{Cap}:  Let $L' = L$, and let $\ups'$ be the union of $\ups$ and a band $\ups_*$ such that $L'_{\ups_*}$ is a split link containing $L$ and an unknotted component.
\item \emph{Band slide}:  Let $L' = L$.  Suppose $\ups_1$ and $\ups_2$ in $\ups$ are described by framed arcs $y_1$ and $y_2$ and that $L$ contains an arc $z$ connecting boundary points of $y_1$ and $y_2$ which does not meet $\ups$ in its interior.  Choose a framing on $z$ which is tangent to $y_1$ and $y_2$ at $\pd z$, so that the arc $y_1 \cup z \cup y_2$ has a coherent framing.  Let $y'$ be the push-off of $y_1 \cup z \cup y_2$ along this framing.  Then $y'$ is a framed arc with $\pd y' \subset L$, and we replace $\ups_1$ with a band $\ups'$ corresponding to $y'$ to get a new banded link $(L',\ups')$.
\item \emph{Band swim}:  Let $L' = L$.  Suppose $\ups_1$ and $\ups_2$ in $\ups$ are described by framed arcs $y_1$ and $y_2$, and let $z$ be a framed arc connecting a point in the interior of $y_1$ to a point in the interior of $y_2$ so that the framing of $z$ is tangent to $y_1$ and $y_2$ at $\pd z$ and the framings of $y_1$ and $y_2$ are tangent to $z$ at $\pd z$.  Extend the framing of $z \cup y_2$ to a two-dimensional regular neighborhood $N$, and let $c$ be the curve boundary of $N$.  Then $y_1$ cuts $c$ into $c_1$ and $c_2$, where $c_1$ is isotopic into $y_1$.  In $y_1$, replace $c_1$ with $c_2$ to get a new framed arc $y'$, and replace $y_1$ with $y'$ to obtain a new banded link $(L',\ups')$.
\ei
The definition of band swim given above seems especially awkward; however, this definition will become useful when all of the framed arcs included in the definition are contained in a single surface with the surface framing, in which case the neighborhood $N$ and thus the arc constructed by the band swim are also contained in the surface with the surface framing.  See Figure \ref{swim}.

\begin{figure}[h!]
  \centering
    \includegraphics[width=.8\textwidth]{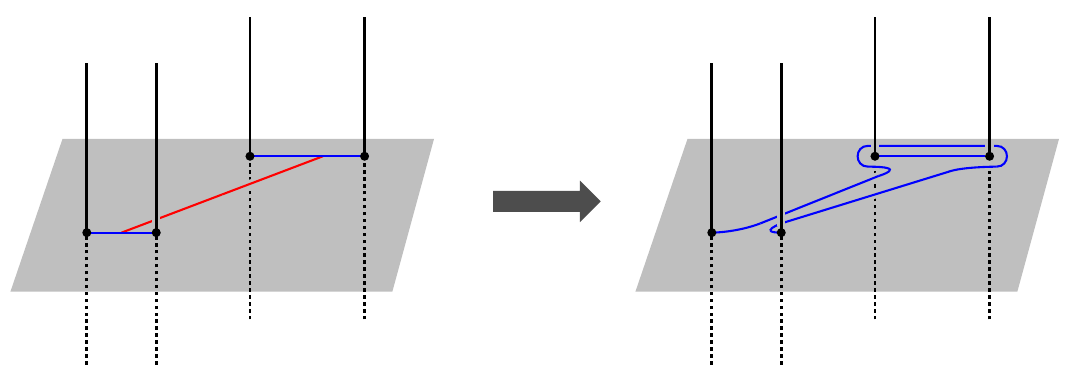}
    \caption{A band swim performed on surface framed arcs.  Only the arcs are pictured; the bands have been suppressed.}
    \label{swim}
\end{figure}

\begin{theorem}\cite{kearton-kurlin, swenton}\label{bandmoves}
If $(L_1,\ups_1)$ and $(L_2,\ups_2)$ are banded links corresponding to two hyperbolic splittings of $(S^4,\K)$, then there is a sequence of cup/cap moves, band slides, and band swims taking $(L_1,\ups_1)$ to a banded link which is isotopic to $(L_2,\ups_2)$ in $S^3$.
\end{theorem}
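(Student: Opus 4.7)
The plan is to prove this by Cerf-theoretic methods: given two hyperbolic splittings $h_0$ and $h_1$ of $(S^4,\K)$ inducing the banded links $(L_1,\ups_1)$ and $(L_2,\ups_2)$, I would interpolate between them with a generic one-parameter family of smooth functions $h_t\colon S^4\to\R$ (generic in the space of functions that are standard on $S^4$) and track how the induced banded link presentation changes as $t$ varies.

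First, I would establish a normal form: for each generic $t$, the function $h_t|_\K$ is Morse with all index $0$ critical points strictly below all index $1$ critical points, which are strictly below all index $2$ critical points, and the level set at a regular height between the index $0$ and index $1$ critical points is an unlink $L(t)$ with saddles encoded by a framed arc system $\ups(t)$ (here one may need to renormalize by composing with a family of diffeomorphisms of $\R$). For values of $t$ in an open dense set, $(L(t),\ups(t))$ varies by ambient isotopy of $S^3$. The key step is then to catalogue the codimension-one failures of genericity and match each to one of the Yoshikawa moves listed in the theorem. The expected list of bifurcations is: (i) birth/death of a cancelling pair of critical points of $h_t|_\K$ of indices $0$-$1$ (cups) or $1$-$2$ (caps); (ii) handle slides of one index-$1$ critical point over another, realized in the slicing as the passage of a saddle's co-core across another saddle's co-core, giving band slides; (iii) two index-$1$ critical points crossing in height, which after pushing both into a common level and tracking the bands through the crossing manifests as a band swim; (iv) isotopies of $L(t)$ in $S^3$ that cross the wall of being a bridge link, which only affect the link up to $S^3$-isotopy.

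To make the correspondence between bifurcations and moves rigorous, I would fix a level $S^3$ containing both the saddles and (a perturbation of) the unlink at each time, and examine the change in $(L(t),\ups(t))$ across each codimension-one event. For a handle slide, the standard model shows the framed arc representing one saddle being replaced by its band-sum with another through a connecting arc in $L(t)$, which is exactly the band slide move. For a saddle-saddle height crossing, one verifies that before and after the crossing one obtains two legitimate banded presentations, and the change is exactly a band swim (the geometric picture is cleanest if the two saddles are first isotoped so their arcs lie in a common surface with the surface framing, as in Figure \ref{swim}). Birth/death events are directly modeled by cup and cap. Finally, general ambient isotopy of $(L(t),\ups(t))$ in $S^3$ between bifurcations is what yields the final ``isotopic in $S^3$'' conclusion.

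The main obstacle is the careful enumeration and local analysis of all codimension-one bifurcations for the one-parameter family, and verifying there are no exotic events missed; this is precisely the technical content of the proofs of Swenton and of Kearton--Kurlin, and is the reason the theorem requires delicate case analysis rather than a short argument. A secondary subtlety is ensuring that the framings on the bands are correctly tracked through each bifurcation; one resolves this by recording the bands as surface-framed arcs on a level $2$-sphere (the bridge sphere model), since the surface framing behaves well under both band slides and band swims. Once the bifurcation analysis is complete, composing the finite sequence of moves corresponding to the critical times $t_1<\dots<t_N$ in $[0,1]$ gives the desired sequence connecting $(L_1,\ups_1)$ to a banded link $S^3$-isotopic to $(L_2,\ups_2)$.
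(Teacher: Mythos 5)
The paper does not actually prove this statement: Theorem \ref{bandmoves} is imported wholesale from Swenton and Kearton--Kurlin, and the authors use it as a black box in the proof of Theorem \ref{thm:uniqueness}. So there is no ``paper's proof'' to compare against; what you have written is an outline of the strategy used in the cited references (especially Kearton--Kurlin's singularity-theoretic argument), and at that level it is consistent with how the result is actually established.

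As a proof, however, your proposal has a gap that you yourself name: the entire content of the theorem is the claim that the list of codimension-one bifurcations is exhausted by births/deaths, handle slides, saddle reorderings, and ambient isotopy, and that each of these translates into exactly one of the four listed moves. Deferring that enumeration to ``the technical content of the proofs of Swenton and of Kearton--Kurlin'' means the argument is a citation dressed as a proof, which is fine for the paper's purposes but is not a proof. Two specific places where more care is needed if you wanted to make this self-contained: (1) your normal form claim that a \emph{generic} $h_t|_{\K}$ automatically has all index-$0$ critical points below all index-$1$ below all index-$2$ is false as stated --- along a generic path there are rearrangement events, and you must argue that independent critical points can always be reordered (and that the non-reorderable coincidences are precisely the birth/death events), which is a genuine Cerf-theoretic step, not a renormalization of the target $\R$; and (2) your identification of a saddle--saddle height crossing with a band swim is too quick --- reordering two saddles whose bands are disjoint changes nothing, and the band swim arises only when, in pushing both saddles into a common level, one band must pass through the trace (the surgered region) of the other; stating and verifying that local model is exactly the delicate part. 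Tracking framings through these events, which you flag as a ``secondary subtlety,'' is likewise part of the case analysis rather than something that follows automatically from working with surface-framed arcs.
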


\begin{remark}
The next theorem may be considered to be a type of Reidemeister-Singer Theorem for bridge trisections.  However, the Reidemeister-Singer Theorem and its various analogues state that two splitting surfaces have a common stabilization.  This seems not to be the case for bridge trisections; rather, the proof of Theorem \ref{thm:uniqueness} reveals that to pass between two trisections $\T$ and $\widehat \T$ of a knotted surface $\K$, it may be necessary to stabilize, destabilize, stabilize, destabilize, etc...  Likewise, we note that two elementary stabilizations of a bridge trisection $\T$ need not be equivalent and also need not commute (for instance, their respective stabilizing disks may intersect), so that the pair need not have a common elementary stabilization.
\end{remark}

\begin{reptheorem}{thm:uniqueness}
Any two bridge trisections of a given pair $(S^4,\K)$ become equivalent after a sequence of stabilizations and destabilizations.\end{reptheorem}

\begin{proof}
Suppose that $\T$ and $\widehat \T$ are two bridge trisections of $(S^4,\K)$, and let $\BB$ and $\widehat \BB$ be banded bridge splittings for banded link presentations $(L,\ups)$ and $(\widehat L,\widehat \ups)$ of $(S^4,\K)$ induced by Lemma \ref{lemma:trimorse}.  By Theorem \ref{bandmoves}, there is a sequence of cup/cap moves, band slides, and band swims taking $(L,\ups)$ to a banded link which is isotopic to $(\widehat L,\widehat \ups)$ in $S^3$.  Thus, it suffices to show that each of these moves may be induced by an appropriate sequence of bridge trisection stabilizations and destabilizations.

Since we will need to stabilize and destabilize the banded bridge splitting $\BB$ numerous times over the course of the proof, we will often abuse notation and preserve that notation for $\BB$ and its components despite that these do, in fact, change under stabilization and destabilization.  We use this convention to limit the unwieldy notation that would result from giving each stabilization and destabilization of $\BB$ a distinct name.

Suppose first that $(L,\ups)$ is related to another banded link $(L',\ups')$ by a single cup move, which may be performed in a small neighborhood of a point $p \in L$.  Let $\BB$ be given by $(S^3,L,\ups) = (B_{12},\A_{12}) \cup_{\Sigma,y^*} (B_{31},\A_{31})$.  By definition of the cup move, the point $p$ is not contained in $y^*$.  Generically, we may also assume that $p \notin \Sigma$ so that $p$ is contained in the interior of an arc $a \in \A_{12}$ or $\A_{13}$.  If one of the endpoints of $a$ does not meet $y^*$, we may slide $p$ along $a$ into $\Sigma$.  Otherwise, we may stabilize $\BB$ toward $B_{12}$ or $B_{31}$ at a point of $\pd a$, after which we may slide $p$ into $\Sigma$.  Now, in a small neighborhood of $p$, we add an unlinked, unknotted component $L_0$ in 1--bridge position to $L$ to get $L'$ and add a single unknotted surface-framed arc $y_0$ connecting $L$ to $L_0$ to the arcs $y^*$ to get a new collection $y'$ which yields the bands $\ups'$.  Let $a_{12}'$ be the arc $L_0 \cap B_{12}$ and let $a_{31}' = L_0 \cap B_{31}$.  Letting $\A_{12}' = \A_{12} \cup \{a_{12}'\}$ and $\A_{31}' = \A_{31} \cup \{a_{31}'\}$, we have $(S^3,L',\ups') = (B_{12},\A_{12}') \cup_{\Sigma,y'} (B_{31},\A_{31}')$ is a also a banded bridge splitting, which we denote $\BB'$.

Observe that $\BB'$ is not a stabilization of $\BB$ as we have defined stabilization for banded bridge splittings (since $\BB$ and $\BB'$ are splittings for distinct banded links); however, we will show that the bridge trisection $\T(\BB')$ given by Lemma \ref{lemma:morsetri} is a stabilization of $\T(\BB)$.  Suppose that a spine of $\T(\BB')$ is given by $(B_{12},\A_{12}') \cup (B_{23},\A_{23}') \cup (B_{31},\A_{31}')$.  Then by construction, there are arcs $a_{12}' \in \A_{12}$ and $a_{31}' \in \A_{31}$ which have bridge disks with identical shadows, and these shadows intersect the arc $y'$ in a single point, where $y'$ is the shadow of a bridge disk for an arc in $\A_{23}'$.  This implies that $\T(\BB')$ is 1--stabilized, and destabilizing results in canceling these three arcs, yielding $\T(\BB)$.  We conclude that any cup move may be achieved by a sequence of bridge trisection stabilizations.

Next, suppose that $(L,\ups)$ is related to $(L,\ups')$ be a single cap move, which can also be assumed to be performed in a small neighborhood of a point $p \in L$, where $p \notin \pd y^*$.  Possibly after stabilizing $\BB$, we may assume that $p$ is contained in arc $a_{31} \subset \A_{31}$.  We may further stabilize $\BB$ along both boundary points of $a_{31}$, stabilizing toward $B_{12}$ if $\pd a_{31}$ meets $\pd y^*$.  After stabilizing, there are arcs $a_{12}$ and $\widehat{a}_{12}$ in $\A_{12}$ such that a union of shadows $a_{12}^* \cup a_{31}^* \cup \widehat{a}_{12}^*$ is an embedded arc and $\pd a_{31}$ does not meet $\pd y^*$.  Let $y_0 = a_{31}^*$ and let $y' = y^* \cup \{y_0\}$.  Then the surface framed arcs $y'$ induce the bands $\ups'$, and the resulting bridge splitting is a banded bridge splitting $\BB'$.

As above, $\BB'$ is not a stabilization of $\BB$ under our rather narrow definition of stabilization for banded bridge splittings.  However, for the corresponding bridge trisection $\T(\BB')$, there is a triple of arcs in the three components of its spine, where these arcs have shadows $y^*$, $a_{31}^* (= y^*)$, and $a_{12}^*$.  It follows that $\T(\BB')$ is 1--stabilized, and destabilizing cancels this triple of arcs, yielding the original bridge trisection $\T(\BB)$.

Suppose now that $(L,\ups)$ is related to $(L,\ups')$ by a single band swim given by a framed arc $z$ whose endpoints are interior points of surface-framed arcs $y_1^*$ and $y_2^*$ in $y^*$.  Let $g\colon(S^3,L) \rightarrow \R$ be a Morse function such that $g^{-1}(0) = \Sigma$.  We claim that there is an isotopy of $g$ which fixes $y^*$ and yields a stabilization of the banded bridge splitting surface $\Sigma$ such that the flow of $g$ projects $z$ onto a properly embedded surface-framed arc $z^*$.  As mentioned above, and as in the proof of Theorem  \ref{thm:existence}, we will abuse notation and let $g$ and $\Sigma$ denote the result of a specified isotopy.

Let $\pi_g(z)$ denote the projection of $z$ to $\Sigma$ given by the flow of $g$, where $g$ has been suitably isotoped so that this projection avoids the bridge arcs $\A_{12}$ and $\A_{31}$ as in the proof of Theorem \ref{thm:existence} (see Figure \ref{offbridge}).  By stabilizing, we may remove crossings of $\pi_g(z)$ and $y_i^*$ (Figure \ref{cross1}) and self-crossings of $\pi_g(z)$ (Figure \ref{cross2}), while $y _i^*$ remains fixed.

\begin{figure}[h!]
  \centering
    \includegraphics[width=1\textwidth]{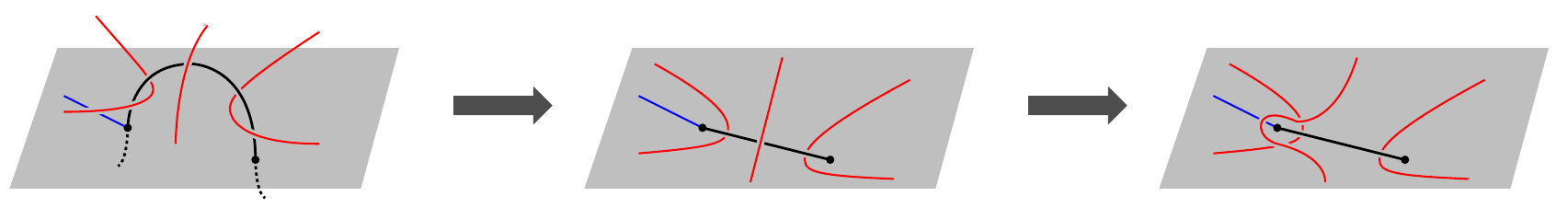}
    \caption{After isotopy, we may project $z$ onto $\pi_g(z) \subset \Sigma$ avoiding the bridge arcs $\A_{12}$ and $\A_{31}$.}
    \label{offbridge}
\end{figure}

\begin{figure}[h!]
  \centering
    \includegraphics[width=.8\textwidth]{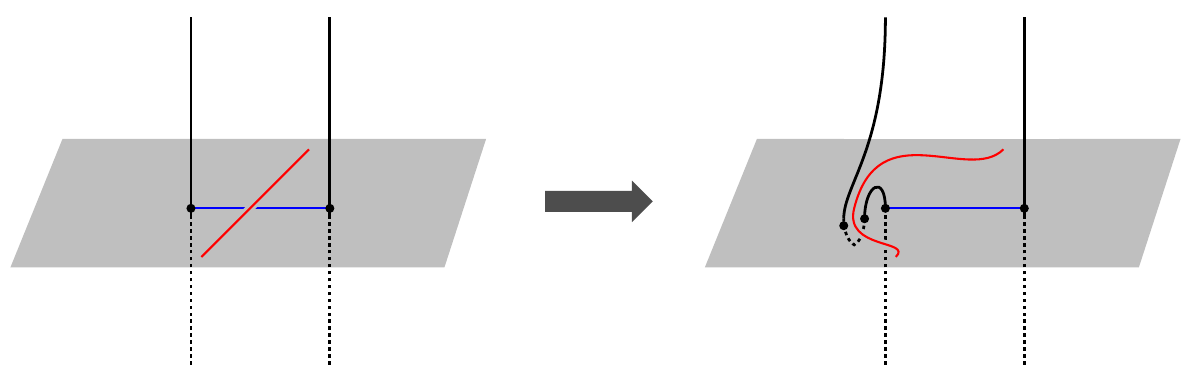}
    \caption{By stabilizing, we may remove crossings of $\pi_g(z)$ with the arcs $y_i^*$.}
    \label{cross1}
\end{figure}

\begin{figure}[h!]
  \centering
    \includegraphics[width=.8\textwidth]{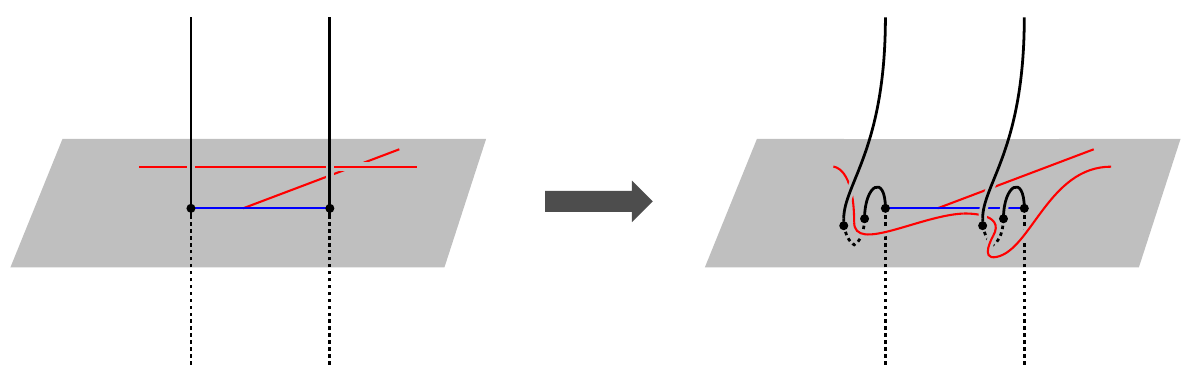}
    \caption{We eliminate self-crossings of $\pi_g(z)$ by stabilizing.}
    \label{cross2}
\end{figure}

\begin{figure}[h!]
  \centering
    \includegraphics[width=1\textwidth]{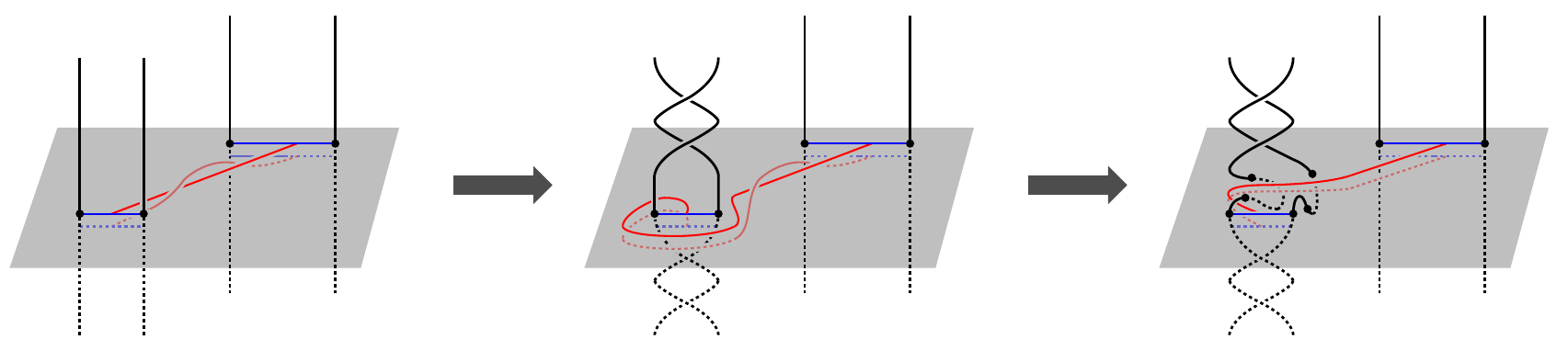}
    \caption{By perturbing $\Sigma$ and isotoping the arc $\pi_g(z)$, we may change the framing of $\pi_g(z)$ induced by $\Sigma$.}
    \label{framing}
\end{figure}

Thus, after performing some number of stabilizations, we may assume that the arc $\pi_g(z)$ is embedded in $\Sigma$.  A priori, the framing of $\pi_g(z)$ may not agree with the surface framing, but after stabilizing further, there is an isotopy which carries $\pi_g(z)$ off of $\Sigma$ and returns it with framing which winds once around the original framing.  See Figure \ref{framing}.  This may be achieved by an isotopy of $g$, and after a finite number of repetitions, we may assume that $z^* = \pi_g(z)$ is embedded in $\Sigma$ with framing given by $\Sigma$, and $\text{int}(z^*) \cap y^* = \emp$.

If necessary, we next stabilize $\Sigma$ near each point of $\pd y_1^*$, so that the arc $y_1^*$ is adjacent to two bridge disks $\Delta$ and $\Delta'$ in $\A_{12}$ which do not intersect any other arcs in $y^*$.  Observe that none of the isotopies performed disturb the arcs $y^*$; hence, the resulting decomposition $\BB'$, which we label $(S^3,L,\ups) = (B_{12}',\A_{12}') \cup_{\Sigma',y^*} (B_{31}',\A_{31}')$, is a stabilization of our original $\BB$.  We may perform the band swim specified by $z$ by replacing $y_1^*$ with a surface-framed arc $y'_1$ as in Figure \ref{swim}. We let $y' = y^* \setminus \{y_1^*\} \cup \{y'_1\}$, and we let $\ups'$ denote the set of bands for $L$ induced by $y'$.

We claim that the decomposition $\BB''$ given by $(S^3,L,\ups') = (B_{12}',\A_{12}') \cup_{\Sigma',y'} (B_{31}',\A_{31}')$ is a banded bridge splitting.  The only property we must verify is that there is a set of shadows for $\A_{12}'$ dual to $y'$, which is accomplished (as in the proof of Theorem \ref{thm:existence}) by a standard cut-and-paste argument involving a set of bridge disks yielding traces dual to $y^*$ and the disk arising from taking the frontier of $\pd \nu(\Delta \cup y'_1 \cup \Delta')$ in $B_{12}'$.

Finally, we claim that the induced bridge trisections $\T(\BB')$ and $\T(\BB'')$ are isotopic.  To see this, note that the band swim represents an isotopy of $\K$ which takes place in a 4--dimensional regular neighborhood of $y_1^* \cup z^* \cup y_2^*$.  After pushing $y_1^* \cup z^* \cup y_2^*$ slightly into the interior of $B_{12}'$ as in the proof of Lemma \ref{lemma:morsetri}, we see that, for the induced trisection $\T(\BB')$, the band swim is realized by an isotopy of $\D_2$ supported in $\text{int}(X_2)$, and thus the result is an isotopic bridge trisection.  We conclude in this case that band swims may be achieved by bridge trisection stabilization.

The proof that band slides can be realized through stabilizations and destabilizations mirrors the proof above for band swims; however, in this case, we require both the stabilization and destabilization operations, and so the argument is more complicated.  Suppose that $(L,\ups)$ is related to $(L,\ups')$ by a single band slide performed along a framed arc $z \subset L$ which connects arcs surface-framed $y_1^*$ and $y_2^*$ in $y^*$.  First, we may arrange so that $z$ meets $y_1^*$ and $y_2^*$ via arcs contained in $B_{12}$ as follows:  Stabilize $\Sigma$ at both endpoints of $\pd y_i^*$ so that $B_{12}$ contains bridge disks $\Delta_1$ and $\Delta_2$ which meet the arcs $y^*$ only in $\pd y_i^*$, as shown in Figure \ref{alphafix}. Then $y_i^*$ may be pushed off of $\Sigma$, over $\Delta_1$ and $\Delta_2$, and back onto $\Sigma$ via an isotopy which reverses the direction of the framing, but which still results in $y_i^* \subset \Sigma$ having the surface framing.  We call this process \emph{flipping} the framed arc $y_i^*$, and we note that flipping may achieved by an isotopy of $g$.

\begin{figure}[h!]
  \centering
    \includegraphics[width=1\textwidth]{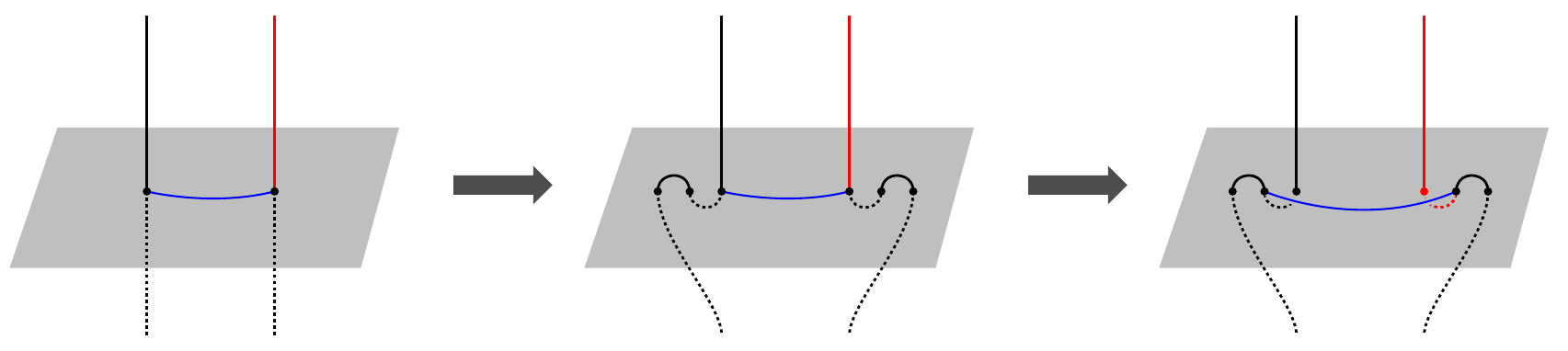}
    \caption{We may perturb $\Sigma$ near $\pd y_i^*$ and push $y_i^*$ off of and back onto $\Sigma$ so that $z$ meets $\pd y_i^*$ via an arc in $B_{12}$.}
    \label{alphafix}
\end{figure}

Since $z$ is a union of arcs in $\A_{12}$ and $\A_{31}$, we use the flow of $g$ to project $z$ onto an immersed arc $\pi_g(z) \subset \Sigma$ away from the other arcs of $\A_{12}$ and $\A_{31}$.  By the definition of a band slide, the interior of $z$ does not contain a boundary point of an arc in $y$, and since we began with a banded bridge splitting, $\pi_g(z)$ meets $y^*$ only in its endpoints.  Thus, the projection has only possible self-intersections.  We may get rid of self-intersections by stabilizations as in Figure \ref{cross2} above, so that after some number of stabilizations, the arc $\pi_g(z)$ is embedded in $\Sigma$.  It is possible that $\pi_g(z)$ has framing which disagrees with the surface framing; however, after stabilizing further as shown in Figure \ref{framing}, we may arrange so that the framed arc $z$ is isotopic to the surface-framed arc $\pi_g(z)$ in $\Sigma$.

If not done already, stabilize $\Sigma$ at the endpoints of $y_1^*$ and $y_2^*$ not contained in $z$ as shown at left in Figure \ref{bslide}. Since $z$ meets $y_1^*$ via arcs in $B_{12}$, we have that $g_z$ has $n$ minima and $n-1$ maxima for some $n > 0$.  Furthermore, there is a collection $\Delta \subset B_{12}$ of $n$ bridge disks and a collection $\Delta' \subset B_{31}$ of $n-1$ bridge disks such that $(\Delta \cup \Delta') \cap \Sigma = \pi_g(z)$.  Thus, we may unperturb $\Sigma$ precisely $n-1$ times until $g_z$ has a single minimum and no maxima; that is to say, $z$ is a single arc in $\A_{12}$ isotopic to a surface-framed arc $z^* = \pi_g(z) \subset \Sigma$.  In addition, our stabilizations of $\Sigma$ at $\pd y_i^*$ guarantee that each unperturbation remains a banded bridge splitting, so that these unperturbations realize $n-1$ destabilizations.

\begin{figure}[h!]
  \centering
    \includegraphics[width=1\textwidth]{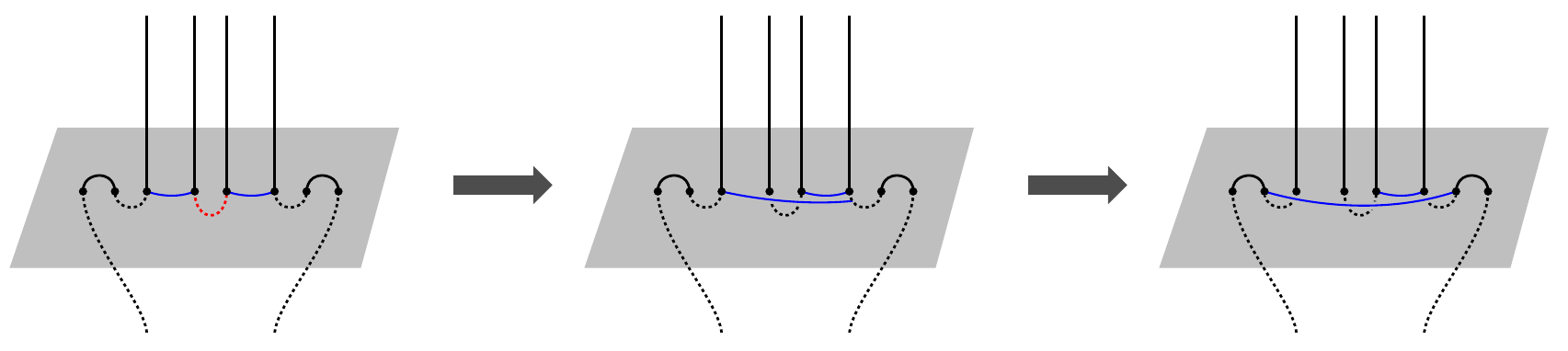}
    \caption{After perturbing $\Sigma$ (left), we may perform the band slide (middle) and flip the resulting band (right) to see that the result of the slide is a banded bridge splitting.}
    \label{bslide}
\end{figure}

As above, none of the isotopies, stabilizations, and destabilizations disturb the arcs $y^*$, and thus the result is a banded bridge splitting $\BB'$ which is stably equivalent to our $\BB$.  Now, we may perform the band slide of $y_1^*$ over $y_2^*$ along $z$ so that the resulting band $y_1'$ is isotopic into $\Sigma$ with the surface framing (ignoring $y_2^*)$. However, this isotopy pushes a point of $\pd y_1'$ onto a point of $\pd y_2^*$ (as in the middle frame of Figure \ref{bslide}).  We resolve this issue by flipping $y_1'$ (as in the right frame of Figure \ref{bslide}), the result of which is a banded bridge splitting $\BB''$ obtained by replacing $y_1^*$ with $y_1'$ in $y^*$.  Finally, as in the case for band swims, we claim that the trisections $\T(\BB')$ and $\T(\BB'')$ are equivalent:  By construction the band slide may be realized by an isotopy of $\K$ supported in the interior of $X_2$, and as such, it preserves the induced bridge trisection.

So far, we have shown that all of the moves of Theorem \ref{bandmoves} may be carried out by stabilizing and destabilizing the banded bridge splitting $\BB$.  Suppose now that the banded links $(L,\ups)$ and $(\widehat L,\widehat \ups)$ for $\BB$ and $\widehat \BB$ are isotopic in $S^3$.  Since the corresponding bridge surfaces $\Sigma$ and $\widehat \Sigma$ may be distinct, there is still work left to do.

Once again, we remedy the situation by stabilizing.  Let $\BB$ be given by $(S^3,L,\ups) = (B_{12},\A_{12}) \cup_{\Sigma,y^*} (B_{31},\A_{31})$ and let $\widehat \BB$ be given by $(S^3,\widehat L,\widehat \ups) = (\widehat B_{12},\widehat \A_{12}) \cup_{\widehat \Sigma,\widehat y} (\widehat B_{31},\widehat \A_{31})$.  We will consider the banded link $(L,\ups)$ to be fixed, so that $(L_,\ups) = (\widehat L,\widehat \ups)$, and let $T$ denote the tangle $T = L \setminus \nu(\ups)$ in $S^3 \setminus \nu(\ups)$.  Stabilize $\Sigma$ enough times so that each arc of $\A_{12}$ is incident to only one arc in $y^*$.  Now, since the arcs $y^*$ are dual to a set of shadows for $\A_{12}$, pushing $\ups$ into the interior of $B_{12}$ and removing $\nu(\ups)$ yields that $\Sigma$ is a bridge surface for $T$ in $S^3 \setminus \nu(\ups)$.  Similarly, after sufficiently many stabilizations, the surface $\widehat \Sigma$ becomes a bridge surface for $T$ in $S^3 \setminus \nu(\ups)$ after pushing $\ups$ into the interior of $\widehat B_{12}$.  By Theorem \ref{common}, there is a bridge surface $\Sigma_*$ for $T$ in $S^3 \setminus \nu(\ups)$ which is a perturbation of both $\Sigma$ and $\widehat \Sigma$.  By gluing the bands $\ups$ back into $T$, we may construct a banded bridge splitting $\BB^*$ with bridge surface $\Sigma^*$ such that $\BB^*$ is a stabilization of both $\BB$ and $\widehat \BB$.

We conclude that any two bridge trisections of $\K$ are related by a sequence of stabilizations and destabilizations, as desired.
\end{proof}

\subsection{Tri-plane moves}\label{subsec:moves}\ 

In this section, we use Theorem \ref{thm:uniqueness} to show that a calculus of moves is sufficient to pass between any two tri-plane diagrams for a fixed knotted surface $\K$ in $S^4$. 

\begin{lemma}\label{lemma:isotopymoves}
Suppose that $\Pau$ and $\widehat\Pau$ are two tri-plane diagrams for equivalent bridge trisections $\T$ and $\widehat\T$ of a knotted surface $\K$ in $S^4$.  Then $\Pau$ and $\widehat\Pau$ are related by a sequence of interior Reidemeister moves and mutual braid transpositions.
\end{lemma}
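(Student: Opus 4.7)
The plan is to reduce to the case of two tri-plane diagrams arising from different tri-planes inside a single bridge trisection, and then to realize an interpolating family of tri-planes as a $1$--parameter family of diagrams whose finitely many singular times produce exactly the allowed moves.

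Since $\T$ and $\widehat{\T}$ are equivalent, there is an ambient diffeomorphism $\phi\colon(S^4,\K)\to(S^4,\K)$ carrying $\Ss$ onto $\widehat{\Ss}$. The genus zero trisection of $S^4$ is unique up to ambient isotopy, so after post-composition with a suitable isotopy of $S^4$ we may assume that $\phi$ preserves the decomposition $S^4=X_1\cup X_2\cup X_3$ (and hence each $B_{ij}$ and $\Sigma$) setwise. Let $E=E_{12}\cup E_{23}\cup E_{31}$ be the tri-plane underlying $\Pau$. Then $\phi(E)$ is a tri-plane for $\widehat{\Ss}$, and projecting $\widehat{\Ss}$ onto $\phi(E)$ via the pushforward of the normal field produces a diagram combinatorially identical to $\Pau$. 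After this reduction, we may assume that $\Pau$ and $\widehat{\Pau}$ are two tri-plane diagrams of the common trisection $\widehat{\T}$ arising from two different tri-planes, $\phi(E)$ and $\widehat{E}$, inside the single spine $\widehat{\Ss}$.

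Next, choose a generic smooth path $E_s=E^s_{12}\cup E^s_{23}\cup E^s_{31}$ of tri-planes inside $\widehat{\Ss}$, with $E_0=\phi(E)$ and $E_1=\widehat{E}$, such that the common boundary curve $e_s=\partial E^s_{ij}\subset\Sigma$ varies smoothly and, except at finitely many singular times, passes through the $2b$ points $\K\cap\Sigma$. Such a path exists because, for a fixed cyclic order of the $2b$ points along $e$, the space of simple closed curves in $\Sigma$ realizing that order is path-connected, and any two adjacent cyclic orders can be interchanged by a small family of curves that briefly allows two adjacent points to exchange; meanwhile, for a fixed $e$, the disk $E_{ij}\subset B_{ij}$ bounded by $e$ is unique up to isotopy rel boundary by the Alexander trick. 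Projecting the fixed spine $\widehat{\Ss}$ onto the moving tri-plane $E_s$ via its continuously varying normal field produces a smooth $1$--parameter family of tri-plane diagrams $\Pau^s$ with $\Pau^0=\Pau$ and $\Pau^1=\widehat{\Pau}$.

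After a small perturbation to put $E_s$ in general position, the diagram $\Pau^s$ is constant except at finitely many singular times of two kinds. At an \emph{interior singularity}, the projection of some $\widehat{\A}_{ij}$ onto $E^s_{ij}$ fails to be a generic immersion at an interior point of $E^s_{ij}$---a cusp, a self-tangency, or a triple point---and so $\Pau^s_{ij}$ changes by a type I, II, or III Reidemeister move, which is an interior Reidemeister move. At a \emph{boundary singularity}, two of the $2b$ points $\K\cap e_s$ exchange along the curve $e_s$; because these points are shared by all three disks of the tri-plane, the exchange occurs simultaneously and identically in $\Pau^s_{12}$, $\Pau^s_{23}$, and $\Pau^s_{31}$, which is precisely a mutual braid transposition. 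Concatenating the finite sequence of moves produced at the singular times yields the required relation between $\Pau$ and $\widehat{\Pau}$. The main technical subtlety lies in the reduction in Step~1: one must confirm that the normalization of $\phi$ and the resulting identification of $\Pau$ with the diagram of $\widehat{\T}$ based on $\phi(E)$ can be carried out coherently with respect to the chosen normal vector fields on the $E_{ij}$, but this is essentially a bookkeeping matter given the uniqueness of the $0$--trisection of $S^4$ up to isotopy.
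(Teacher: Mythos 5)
Your argument is correct in outline, but in its main step it takes a genuinely different route from the paper's. Both proofs begin with the same reduction: equivalence of $\T$ and $\widehat\T$ means the spines are isotopic, so one may assume $\Pau$ and $\widehat\Pau$ arise from two tri-planes $E$ and $\widehat E$ inside a single spine $\Ss$. From there you keep the tangles fixed and sweep the tri-plane through a generic one-parameter family $E_s$, reading off interior Reidemeister moves from interior singularities of the family of projections (cusps, tangencies, triple points) and mutual braid transpositions from the finitely many times at which two adjacent points of $\K\cap\Sigma$ exchange along $e_s$ --- the ``moving screen'' proof in the style of the classical Reidemeister theorem. The paper instead dualizes: it fixes the tri-plane and isotopes the tangles via a family $f_t$ of embeddings of the abstract $1$--complex $\Gamma = \A_{12}\cup\A_{23}\cup\A_{31}$, records the restriction of $f_t$ to the $2b$ boundary points as a braid $\sigma$ in $\Sigma$, and factors the isotopy into a braid part supported near $\Sigma$ (yielding exactly the mutual braid transpositions) followed by an isotopy rel boundary of each tangle (yielding interior Reidemeister moves via the usual Reidemeister theorem for tangle diagrams). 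The paper's factorization cleanly separates the two kinds of moves and leans only on established one-plane facts, at the cost of the slightly fiddly concatenation $h_t = -g_t + f_t$; your version instead requires the transversality analysis of a generic family of projections and the path-connectivity of the space of tri-planes through the $2b$ marked points, which you assert (the paper makes an essentially equivalent assertion when it claims $E$ and $\widehat E$ are isotopic within $\Ss$). Neither route has a fatal gap; yours is acceptable, provided you are prepared to justify the genericity claims you invoke.
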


\begin{proof}
If $\T$ and $\widehat\T$ are equivalent bridge trisections, then their respective spines are isotopic in $S^4$.  Thus, we may suppose that $\Pau$ and $\widehat\Pau$ are tri-plane diagrams for a \emph{fixed} bridge trisection $\T$.  In this case, there is a spine $\Ss = (B_{12},\A_{12}) \cup (B_{23},\A_{23}) \cup (B_{31},\A_{31})$ for $\T$ and isotopic tri-planes $E$ and $\widehat E$ in $\Ss$, projections onto which yield $\Pau$ and $\widehat \Pau$, respectively.  Let $E = E_{12} \cup E_{23} \cup E_{31}$ and $\widehat E = \widehat E_{12} \cup \widehat E_{23} \cup \widehat E_{31}$.  Then for each pair $(i,j)$ of indices, there is an isotopy taking $E_{ij}$ to $\widehat E_{ij}$, and these three isotopies agree on $\Sigma$.

Therefore, we may change our perspective further:  Instead of fixing the tangles and moving the tri-plane, we could just as well fix the tri-plane and move the tangles.  Thus, suppose that $\A_{ij}$ is isotopic to $\widehat \A_{ij}$ in $B_{ij}$, where the three isotopies agree on $\Sigma$ and the three projections of $\widehat \A_{ij}$ onto $E_{ij}$ induce the tri-plane diagram $\widehat\Pau$.  We will let $\Gamma$ denote $\A_{12} \cup \A_{23} \cup \A_{31}$ as an abstract topological 1--complex, and let $V \subset \Gamma$ denote the 0--skeleton of $\Gamma$; that is, the points $\pd \A_{12} = \pd \A_{23} = \pd \A_{31}$.  By the discussion above, there is a continuous family of embeddings $f_t\colon \Gamma \hookrightarrow \Ss$ such that $f_0(\Gamma) = \A_{12} \cup \A_{23} \cup \A_{31}$, $f_1(\Gamma) = \widehat\A_{12} \cup \widehat\A_{23} \cup \widehat\A_{31}$, and $f_t(V) \subset \Sigma$ for all $t$.

We may also assume that $\A_{ij}$ and $\widehat \A_{ij}$ meet $\Sigma$ in the same set of points, which are contained in the equator $e = \pd E_{ij}$ of the tri-plane $E$.  This implies that $f_0(V) = f_1(V)$, so that $f_t|_V$ is a loop in the configuration space of $2b$ points; that is, the restriction of $f_t$ to $V$ is a braid $\sigma$ of $2b$ points in $\Sigma$.  Let $g_t\colon \Gamma \rightarrow \Ss$ be a continuous family of embeddings that agrees with $f_t$ on $\Sigma$ and is the constant map $g_t(\Gamma) = \A_{12} \cup \A_{23} \cup \A_{31}$ away from a small neighborhood of $\Sigma$, so that $g_0(\Gamma) = f_0(\Gamma)$.  In addition, let $\Gamma_{ij} \subset \Gamma$ such that $g_0(\Gamma_{ij}) = \A_{ij}$, and let $\A_{ij}' = g_1(\Gamma_{ij})$.  Then there is a tri-plane diagram $\Pau'$ given by the projection of $\A_{ij}'$ to $E_{ij}$ such that $\Pau$ and $\Pau'$ are related by a sequence of mutual braid transpositions (namely, $\sigma$).

Finally, let $h_t = -g_t + f_t$.  More specifically,
\[ h_t =\begin{cases}
		g_{1-2t}		&\text{if $t \in [0,1/2]$}\\
		f_{2t-1}			&\text{if $t \in [1/2,1]$.}
	\end{cases}\]
The restriction of $h_t$ to $V$ is isotopic to the identity in $\Sigma$, and thus there is an isotopic family of embeddings $h^*_t$ such that $h^*_0 = h_0 = g_1$, $h^*_1 = h_1 = f_1$, and $h^*_t$ restricted to $V$ \emph{is} the identity.  As such it follows that $\A_{ij}'$ is isotopic to $\widehat\A_{ij}$ rel boundary, which implies that $\Pau'_{ij}$ is related to $\widehat\Pau_{ij}$ by a sequence of interior Reidemeister moves.  We conclude that $\Pau$ and $\widehat\Pau$ are related by mutual braid transpositions and interior Reidemeister moves, as desired.
\end{proof}

\begin{reptheorem}{thm:tri-planemoves}
Let $\Pau$ and $\widehat\Pau$  be two tri-plane diagrams for a knotted surface $\K\subset S^4$.  Then $\Pau$ and $\widehat\Pau$ are related by a sequence of tri-plane moves.
\end{reptheorem}

\begin{proof}
Let $\T$ and $\widehat \T$ be the bridge trisections corresponding to $\Pau$ and $\widehat\Pau$.  By Theorem~\ref{thm:uniqueness}, $\T$ and $\widehat \T$ are stably equivalent.  Thus, it suffices to show that if $\T$ and $\widehat \T$ are related by a single stabilization or destabilization, then $\Pau$ and $\widehat \Pau$ are related by a sequence of tri-plane moves. 

Following the discussion in Section \ref{sec:stab}, a stabilization or destabilization of $\T$ may be carried out diagrammatically; that is, there are tri-plane diagrams $\Pau'$ and $\widehat \Pau'$ for $\T$ and $\widehat \T$, respectively, so that $\Pau'$ is related to $\widehat \Pau'$ by a stabilization or destabilization move.  As $\Pau$ and $\Pau'$ are diagrams for the same bridge trisection $\T$, Lemma \ref{lemma:isotopymoves} asserts that they are related by interior Reidemeister moves and mutual braid transpositions.  Similarly, $\widehat\Pau'$ is related to $\widehat\Pau$ by interior Reidemeister moves and mutual braid transpositions.  Therefore, we may pass from $\Pau$ to $\widehat \Pau$ by a sequence of tri-plane moves, as desired.
\end{proof}


\bibliographystyle{acm}
\bibliography{BridgeTrisectionBiblio.bib}

\end{document}